\def\csname ver@l3regex.sty\endcsname{}
\tikzstyle directed=[postaction={decorate,decoration={markings,
    mark=at position #1 with {\arrow{>}}}}]
\tikzstyle rdirected=[postaction={decorate,decoration={markings,
    mark=at position #1 with {\arrow{<}}}}]
\tikzset{fontscale/.style = {font=\relsize{#1}}
    }
\tikzset{snake it/.style={decorate, decoration=snake}}
\def\C{{\mathbb{C}}}
\def\R{{\mathds R}}
\def\K{{\mathcal{K}}}
\def\cA{\mathcal{A}}
\def\1{{\mathbbm{1}}}
\def\D{\mathsf{D}}
\newcommand{\Z}{\mathbb{Z}}
\newcommand{\Hik}{\mathcal{H}} 
\DeclareMathAlphabet{\mathpzc}{OT1}{pzc}{m}{it}
\newcommand{\action}{\mathbf{a}}
\DeclareMathOperator{\sgn}{sgn}
\DeclareMathOperator{\Inv}{Inv}
\DeclareMathOperator{\std}{\mathsf{std}}
\newcommand{\sstd}[1]{\operatorname{sstd}^{#1}}
\newcommand{\wt}{\ensuremath\mathrm{wt}}
\newcommand{\ep}{\epsilon}
\newcommand{\CO}{\mathcal{O}}
\newcommand{\Fl}{\mathcal{F}\ell}
\DeclareMathOperator{\GL}{GL}
\DeclareMathOperator{\SL}{SL}
\DeclareMathOperator{\spann}{span}
\DeclareMathOperator{\gr}{gr}
\newcommand{\GLr}{\GL_r}
\newcommand{\bw}{\mathbf{w}}
\newcommand\End{\operatorname{End}}
\newcommand{\dyck}[1]{\operatorname{D}(#1)}
\newcommand{\area}{\operatorname{\mathtt{area}}}
\newcommand{\coarea}{\operatorname{\mathtt{co-area}}}
\newcommand{\dinv}{\operatorname{\mathtt{dinv}}}
\newcommand{\codinv}{\operatorname{\mathtt{co-dinv}}}
\newcommand{\qbinom}[2]{\left[\begin{matrix} #1 \\ #2 \end{matrix}\right]_{q}}
\newcommand{\ver}{\mathcal{V}}
\newcommand{\park}[1]{\operatorname{PF}(#1)}
\newcommand{\sspf}[2]{\operatorname{SSPF}_{#1}(#2)}
\newcommand{\we}{\operatorname{\mathtt{wt}}}
\newcommand{\A}{\mathcal{A}}
\newcommand{\affsym}[2]{\widetilde{S}^{#2}_{#1}}
\newcommand{\des}{\operatorname{Des}} 
\newcommand{\hik}{\mathcal{H}}
\newcommand{\affc}[1]{\operatorname{AC}_{#1}}
\newcommand{\affcz}[1]{\overline{\operatorname{AC}}_{#1}}
\newcommand{\strip}{\operatorname{St}}
\newcommand{\minlength}[3]{S_{#1}\backslash\widetilde{S}^{#3}_{#2}}
\newcommand{\EHA}{\mathcal{E}^{++}}
\newcommand{\DAHA}[1]{\mathbb{SH}(#1)^{++}}
\newcommand{\pol}[1]{\mathrm{Pol}_{#1}}
\newcommand{\Sym}{\Lambda}
\newcommand{\Symr}{\Lambda_r}
\newcommand{\bx}{\mathbf{x}}
\DeclareMathOperator{\INV}{Inv}
\newcommand{\AS}{\widetilde{S}}
\newcommand{\gam}{\delta}
\DeclareMathOperator{\Pol}{Pol}
\DeclareMathOperator{\Geom}{Geo}
\newcommand{\newword}[1]{\emph{\textbf{#1}}}
\newcommand{\redone}{{\color{red} 1}}
\newcommand{\redtwo}{{\color{red} 2}}
\newcommand{\blueone}{{\color{blue} 1}}
\theoremstyle{plain}
\newtheorem{theorem}{Theorem}[section]
\newtheorem{corollary}[theorem]{Corollary}
\newtheorem{proposition}[theorem]{Proposition}
\newtheorem{lemma}[theorem]{Lemma}
\theoremstyle{definition}
\newtheorem{definition}[theorem]{Definition}
\newtheorem{conjecture}[theorem]{Conjecture}
\theoremstyle{remark}
\newtheorem{remark}[theorem]{Remark}
\begin{document}


\title[Higher Catalan]{Higher rank $(q,t)$-Catalan Polynomials, Affine Springer Fibers, and a Finite Rational Shuffle Theorem}

\author[Gonz\'{a}lez]{Nicolle Gonz\'{a}lez}
\address{Department of Mathematics,  University of California Berkeley , CA 94720-3840, U.S.A.}
\email{nicolle@math.berkeley.edu}
\author[Simental]{Jos\'e Simental}
\address{Instituto de Matem\'aticas, Universidad Nacional Aut\'onoma de M\'exico. Ciudad Universitaria, CDMX,  M\'exico}
\email{simental@im.unam.mx}
\author[Vazirani]{Monica Vazirani}
\address{Department of Mathematics,  University of California Davis, 95616, U.S.A}
\email{vazirani@math.ucdavis.edu }

\thanks{}

\subjclass[2010]{Primary 05E10; Secondary 05E05,20C08,05E14}

\date{\today}


\keywords{}

\begin{abstract}
We introduce the higher rank $(q,t)$-Catalan polynomials and prove they equal truncations of the Hikita polynomial to a finite number of variables. Using affine compositions and a certain standardization map,  we define a $\mathtt{dinv}$ statistic on rank $r$ semistandard $(m,n)$-parking functions and prove $\mathtt{codinv}$ counts the dimension of an affine space in an affine paving of a parabolic affine Springer fiber.  Combining these results, we give a finite analogue of the Rational Shuffle Theorem in the context of double affine Hecke algebras.  Lastly, we also give a Bizley-type formula for the higher rank Catalan numbers in the non-coprime case.

\end{abstract}

\maketitle

\section{Introduction}

Some of the most celebrated results in recent years in algebraic combinatorics have been the so called \emph{shuffle theorems}. Originally arising in Garsia and Haiman's search for the Frobenius character of the space of diagonal harmonics, these theorems have seen various generalizations with deep and unexpected connections to Macdonald polynomials, the homology of torus knots, the geometry of Gieseker varieties, the K-theory of Hilbert schemes and other areas in representation theory, $(q,t)$-Catalan combinatorics, and algebraic geometry. At the core of the theorems is the search for an expression of a particular action of the \emph{elliptic Hall algebra} on the ring of symmetric functions in an infinite number of variables in terms of important combinatorial objects known as \emph{parking functions}. Deeply intertwined in this story are the \emph{rational $(q,t)$-Catalan numbers}, which arise naturally in all the algebraic, geometric, topological, and combinatorial settings in which the various connections lie. In this paper we generalize several of these constructions as well as deepen some of the geometric and representation theoretic connections. 

We begin by introducing the \emph{rank r rational $(q,t)$-Catalan polynomials}, a family of symmetric polynomials in $r$-variables that specialize to the rational $(q,t)$-Catalan numbers. To do this, we propose new combinatorial objects called \emph{affine compositions} which we show are in bijection with the set of \emph{semistandard parking functions} introduced previously in \cite{EKLS}. This bijection generalizes a previous construction of Gorsky-Mazin-Vazirani \cite{GMV} between certain
affine permutations and $(m,n)$-parking functions,
where $m$ and $n$ are coprime positive integers. Utilizing this bijection, we construct a \emph{standardization} procedure for semistandard parking functions which allows us to define traditional combinatorial statistics like $\dinv$ and $\area$. A common theme in algebraic combinatorics is finding expressions for important functions in terms of combinatorial statistics. A family of functions admitting such a combinatorial description are the \emph{Hikita polynomials}, series in infinite variables which appear most notably in the statement of the \emph{Rational Shuffle Theorem}. One of our main theorems is that the rank $r$ rational $(q,t)$-Catalan polynomials correspond precisely to a finite variable truncation of the Hikita polynomials. 

Connecting our work to the world of \emph{affine Springer fibers}, we then describe how the $\dinv$ statistic actually arises naturally in a geometric setting. Namely, in \cite{LusztigSmelt} Lusztig and Smelt prove that certain Springer fibers $\mathcal{F}_{m,n}$ in the affine flag variety can be paved by $m^{n-1}$ affine cells. Hikita \cite{hikita} expanded this further by providing a combinatorial bijection between the cells in $\mathcal{F}_{m,n}$ and $(m,n)$-parking functions, as well as computing the dimensions of the cells in terms of combinatorial statistics on parking functions.
Gorsky-Mazin-Vazirani \cite{GMV} then combined these ideas and proved that the affine Springer fiber $\mathcal{F}_{m,n}$ admits a paving by affine cells indexed by so called \emph{m-stable} affine permutations. We generalize these constructions further and consider a larger variety $X$ consisting of a disjoint union of certain affine Springer fibers. Another of our main results is that the variety $X$ admits an affine paving by affine cells indexed by the set of semistandard parking functions, where the dimension of each affine cells is  precisely equal to the $\codinv$ of the corresponding semistandard parking function.

Finally, we turn towards representation theory. The classical and rational shuffle theorems, conjectured by Haglund et. al. \cite{HHLRU} and Gorsky-Negu\c{t} \cite{GorskyNegut} and proven by Carlsson-Mellit \cite{CM} and Mellit \cite{Mellit-rational}, respectively, express a particular geometric action of the elliptic Hall algebra $\EHA$ on the element $1$ in the ring of symmetric functions as a combinatorial sum over $(m,n)$-parking functions. In the classical case, this sum agrees with the action of the nabla operator $\nabla$ on the $n$th elementary symmetric function. In the rational setting, this action yields the Hikita polynomial. Relying on Schiffmann and Vasserot's beautiful work describing the elliptic Hall algebra as an inverse limit of \emph{spherical double affine Hecke algebras} $\DAHA{r}$, we transport the geometric action of $\EHA$ to the realm of symmetric polynomials in a \emph{finite} number of variables $\Symr$ to construct a new representation of the spherical double affine Hecke algebra on $\Symr$. We then show this geometric representation of $\DAHA{r}$ is isomorphic to its usual polynomial representation via a `truncated' plethysm that intertwines these actions. Our final main result is that this new $\DAHA{r}$ action yields the rank $r$ rational $(q,t)$-Catalan polynomials and thus recovers, under the inverse limit, the action of $\EHA$ that produces the Hikita polynomial. That is, we prove a \emph{finite rational shuffle theorem}.  

In all of the constructions above, the integers $(m,n)$ are always considered to be coprime and positive. However, many of the constructions above have been generalized to the non-coprime or even non-integer case. For instance, in recent years a shuffle theorem for any slope was proven by Blasiak et. al. \cite{BHMPS}. In this article we also begin to study this direction by giving a Bizley-type formula for the \emph{higher rank rational Catalan numbers} when $m,n$ are not coprime.  We hope this may help pave the way for an eventual DAHA shuffle theorem for all slopes.  

\subsection{Semistandard Parking Functions and Higher Rank Catalan Numbers} Dyck paths are one of the most studied objects in combinatorics. Although they initially arose in the study of ballot problems, they have a surprising number of connections to representation theory, low-dimensional topology and algebraic combinatorics. Given positive integers $m$ and $n$, an \newword{(m,n)-Dyck path} is a lattice path consisting of south and east steps from $(0,m)$ to $(n,0)$ that stays weakly below the diagonal line $mx + ny = mn $. Denote by $\dyck{m,n}$ the set of all $(m,n)$-Dyck paths. The \emph{rational Catalan number} $C_{(m,n)}$ is defined as the cardinality of this set. When $m = n$ or $m = n+1$, one recovers the usual Catalan numbers $C_n = \frac{1}{n+1}{{2n}\choose{n}}$. When $m = kn+1$, one obtains the so-called Fuss-Catalan numbers. 

Some of the connections between Dyck paths and various fields are better appreciated when one looks at certain related objects that arise as labelings of these paths known as \emph{parking functions}. These functions, introduced originally by Konheim and Weiss \cite{parking} in relation to hashing problems and generalized to the rational-slope case by Armstrong, Loehr and Warrington \cite{ALW}, are given by certain bijective fillings from the set $\lbrace 1, \dots m \rbrace$ to the vertical steps of the Dyck path. In this paper, we study combinatorial objects introduced by Simental alongside Etingof-Krylov-Losev \cite{EKLS} that interpolate between Dyck paths and parking functions called \emph{semistandard parking functions}, defined as follows.  

Given $\D \in \dyck{m,n}$, denote by $\ver(\D)$ the set of vertical (i.e. south) steps of $\D$, so that $|\ver(\D)| = m$. 
For any positive integers, $r,m$ and $n$, a \newword{rank $r$ semistandard parking function} is a pair $(\D, \Upsilon)$, where 
\begin{enumerate}
\item $\D \in \dyck{m,n}$ is an $(m,n)$-Dyck path,  and
\item $\Upsilon$ is a function $\Upsilon: \ver(\D) \to \{1, \dots, r\}$,
\end{enumerate}
such that if $v_i, v_{i+1}$ are two \emph{consecutive} vertical steps with $v_i$ directly above $v_{i+1}$, then $\Upsilon(v_i) \leq \Upsilon(v_{i+1})$ (see Definition \ref{def:sspf}).
We denote by $\sspf{r}{m,n}$ the set of rank $r$ semistandard parking functions and define the \newword{rank $r$ rational Catalan number} $C^{(r)}_{(m,n)}$ as the cardinality of this set. While the following theorem was originally proven in \cite{EKLS} using representation-theoretic techniques, in Theorem \ref{thm:coprime sspf} we give a new elementary proof. 
\begin{theorem} [Thm. \ref{thm:coprime sspf}] \label{thm:coprimeIntro}
For any positive integers $m,n,r$ with $m,n$ coprime, 
\[
C^{(r)}_{(m,n)} = \frac{1}{n}\binom{nr+m-1}{m}.
\]
\end{theorem}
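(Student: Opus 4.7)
The plan is to prove the formula by a cycle-lemma / orbit-counting argument, encoding a semistandard parking function as a multiset of pairs on which $\mathbb{Z}/n$ acts freely.

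First, I would encode $(\D, \Upsilon) \in \sspf{r}{m,n}$ as a multiset $M$ of $m$ pairs from $\{0, \dots, n-1\} \times \{1, \dots, r\}$. Writing $\D = S^{k_0} E S^{k_1} E \cdots E S^{k_{n-1}} E S^{k_n}$, the Dyck condition forces $k_n = 0$ (a final south step would land strictly above the diagonal). The semistandard condition says exactly that the labeling $\Upsilon$ on the $q$-th vertical run of length $k_q$ is determined by an unordered multiset of $k_q$ elements of $\{1, \dots, r\}$ (as its unique weakly increasing arrangement top-to-bottom). Taking $M$ to be the union over $q \in \{0, \dots, n-1\}$ of these multisets paired with the run index gives a bijection between $\sspf{r}{m,n}$ and size-$m$ multisets of $\{0, \dots, n-1\} \times \{1, \dots, r\}$ satisfying the Dyck inequality
\[ K_j(M) := \#\{(q, c) \in M : q < j\} \geq \lceil mj/n \rceil \quad \text{for } j = 1, \dots, n-1. \]
Dropping the Dyck condition, the total count of such multisets is $\binom{nr+m-1}{m}$.

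Next, I would let $\mathbb{Z}/n$ act on multisets by $\sigma \cdot (q, c) = (q+1 \bmod n, c)$ and check that this action is free. If $\sigma^d$ fixes $M$ for some $0 < d < n$, taking $d$ minimal forces $d \mid n$ and the multiplicities of $M$ to be constant on each $\langle \sigma^d \rangle$-orbit in $\{0, \dots, n-1\}$; since those orbits have size $n/d$, we get $(n/d) \mid m$, contradicting $\gcd(m,n) = 1$. Hence every orbit has size $n$ and the number of orbits equals $\frac{1}{n}\binom{nr+m-1}{m}$.

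Finally, a cycle-lemma argument shows that each orbit contains exactly one Dyck representative. Setting $S_j(M) := n K_j(M) - mj$, the Dyck condition reads $S_j \geq 0$ with $S_0 = S_n = 0$, and cyclically shifting $M$ by $\sigma^d$ translates the sequence $(S_j)$ by $d$ and subtracts the value at the new origin, so a shift produces a Dyck multiset precisely when it moves the minimum of $(S_0, \dots, S_{n-1})$ to position $0$. This minimum is unique: $S_{j_1} = S_{j_2}$ would imply $n(K_{j_2}-K_{j_1}) = m(j_2-j_1)$, so $n \mid (j_2 - j_1)$ by coprimality, which is impossible for distinct $j_1, j_2 \in \{0, \dots, n-1\}$. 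Combining the orbit count with this uniqueness gives $C^{(r)}_{(m,n)} = \frac{1}{n}\binom{nr+m-1}{m}$. The main obstacle is setting up the cycle-lemma step cleanly, but it turns out to be mild: coprimality of $m$ and $n$ does all the heavy lifting, both freeing the $\mathbb{Z}/n$-action and forcing the partial sums $S_j$ to be pairwise distinct so that the argmin is unique.
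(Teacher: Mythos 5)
Your proof is correct and follows essentially the same strategy as the paper: encode a rank-$r$ semistandard parking function (with the Dyck condition dropped) by recording, for each of the $n$ columns, a multiset of labels in $\{1,\dots,r\}$; count these objects as weak compositions, getting $\binom{nr+m-1}{m}$; act by $\mathbb{Z}/n$ cyclic shift on the column index; use coprimality to see the action is free; and apply a cycle-lemma argument to show each orbit contains exactly one Dyck representative. The paper states the freeness and the unique-representative step without detail (``it is straightforward to check''), whereas you have written out both: the divisibility argument for freeness, and the partial-sums $S_j = nK_j - mj$ with the coprimality-forces-unique-minimum step. These are exactly the arguments the paper authors have in mind, so this is the same proof, just with the routine verifications filled in.
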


As mentioned above, the semistandard parking functions interpolate between Dyck paths and $(m,n)$-parking functions in the following sense. If $r = 1$, the function $\Upsilon$ in Definition \ref{def:sspf} is forced to be the constant function $1$, so we have an obvious bijection $\sspf{1}{m,n} \to \dyck{m,n}$, with $C^{(1)}_{(m,n)} = C_{(m,n)}$. Conversely, if $r=m$ and $\Upsilon$ is a bijection then the resulting object is an \newword{$(m,n)$-parking function}. In the case when $m$ and $n$ are coprime, these objects were studied in \cite{EKLS} in connection to the representation theory of quantized Gieseker varieties, of which the Hilbert scheme of points on $\C^2$ is the ``rank $1$" case. Connections between Catalan combinatorics and the representation theory of quantized Hilbert schemes (i.e. type $A$ rational Cherednik algebras) have been extensively studied in the literature, see e.g. \cite{BEG, gordon, GordonGriffeth, GMV, GORS}.

\subsection{Higher Rank $(q,t)$-Catalan Polynomials} \label{sec:introCatalanPol}
The first goal of this paper is to produce a multiparametric deformation $C^{(r)}_{(m,n)}(x_1, \dots, x_r; q,t)$ of the rank $r$ rational Catalan number $C^{(r)}_{(m,n)}$ in the spirit of the usual $(q,t)$-Catalan numbers $C_{n}(q,t)$ of \cite{ALW, GHCatalan, Loehr}. There are several reasons to expect a combinatorially meaningful multiparametric deformation of $C^{(r)}_{(m,n)}$ to exist. First of all, the number $C^{(r)}_{(m,n)}$ arises in \cite{EKLS} as the dimension of a graded $\GLr$-representation that is conjectured to admit a compatible filtration: $C^{(r)}_{(m,n)}(x_1, \dots, x_r;q,t)$ is then, conjecturally, its bigraded $\GLr$-character. Moreover, it was also shown in \cite[Prop. 2.33]{EKLS} that for every divisor $d$ of $r$ the following expression of $q$-numbers, 
\[
\frac{[d]_q}{[nd]_q}\left[\begin{matrix}nr + m - 1 \\ m \end{matrix}\right]_q
\]
is a polynomial in $q$ with nonnegative integer coefficients that recovers the rank $r$ rational Catalan numbers at $q=1$. Thus, $C^{(r)}_{(m,n)}$ admits many $1$-parametric deformations, each of which should arise as a different specialization of a single multiparametric deformation.

We proceed to define the rank $r$ Catalan polynomial by introducing the usual statistics on the set $\sspf{r}{m,n}$ of $\area$, $\dinv$ and weight. The $\area$ statistic corresponds to the variable $q$, the $\dinv$ statistic to the variable $t$, and the weight statistic is $\Z^r_{\geq 0}$-valued and accounts for the variables $x_1, \dots, x_r$.  The $\area$ and weight statistics are defined as for parking functions and can be easily obtained from $(\D, \Upsilon)$. The $\dinv$ statistic, on the other hand, requires more work.

This statistic has been defined and proven to be meaningful in the special cases of Dyck paths and $(m,n)$-parking functions mentioned above. We denote the set of $(m,n)$-parking functions by $\park{m,n}$ and leverage the knowledge of a meaningful dinv statistic on the set $\park{m,n}$ to define this statistic on $\sspf{r}{m,n}$. To do so, we generalize several notions that were introduced by Gorsky-Mazin-Vazirani in \cite{GMV}. In particular, we define certain functions which we call \newword{affine compositions}. These functions naturally generalize \emph{affine permutations} and, under some stabilization conditions, are in bijective correspondence with the
finite
 set $\sspf{r}{m,n}$. That is, if $\affc{}^n(m, r)$ denotes the set of \newword{n-stable affine compositions} (see Definition \ref{def:affcomp}), given any $(\D, \Upsilon) \in \sspf{r}{m,n}$ we construct an explicit function $f_{(\D, \Upsilon)} \in \affc{}^n(m, r)$ and prove the following. 
\begin{theorem}[Thm. \ref{thm:bijection}] \label{thm:bijectionIntro}
The map $\cA: \sspf{r}{m,n} \to \affc{}^n(m, r)$ sending  $(\D, \Upsilon) \mapsto f_{(\D, \Upsilon)}$ is a weight preserving bijection.
\end{theorem}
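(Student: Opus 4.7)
The plan is to mirror, and then extend, the construction of Gorsky-Mazin-Vazirani \cite{GMV} that bijects $n$-stable affine permutations with $(m,n)$-parking functions in the coprime setting, now allowing the labeling $\Upsilon$ to be non-injective. The argument has three ingredients: an explicit forward map $\cA$, an explicit inverse, and a direct check that weight is preserved.

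To construct $\cA$, I would enumerate the vertical steps of $\D$ in a canonical reading order (the diagonal order used in \cite{GMV}, reading labels along anti-diagonals of slope $-m/n$). The values of $f_{(\D, \Upsilon)}$ on a fundamental window of length $n$ are then determined by the labels $\Upsilon(v_i) \in \{1, \ldots, r\}$ together with the geometric positions of the $v_i$, and the function is extended to $\Z$ via the stabilization relation built into the definition of $\affc{}^n(m, r)$. One then verifies that the resulting $f_{(\D, \Upsilon)}$ actually lies in $\affc{}^n(m, r)$: the $n$-stability inequalities reflect the geometric condition that $\D$ sits weakly below the line $mx + ny = mn$, while the semistandard condition $\Upsilon(v_i) \leq \Upsilon(v_{i+1})$ on consecutive vertical steps ensures the image is a genuine affine composition (not just a permutation) whenever labels repeat.

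For the inverse, given $f \in \affc{}^n(m, r)$, I would recover $\D$ by comparing the window values of $f$ with their shifts under the stabilization relation, which determines the horizontal/vertical step pattern, and recover $\Upsilon$ by reading the values of $f$ directly on those vertical steps. Showing $\cA^{-1} \circ \cA = \mathrm{id}$ and $\cA \circ \cA^{-1} = \mathrm{id}$ reduces to checking that the reading order used in the forward direction matches the reconstruction order in the backward direction, a finite combinatorial verification on the fundamental window. Weight preservation is then automatic: the multiset $\{\Upsilon(v_1), \ldots, \Upsilon(v_m)\}$ coincides by construction with the multiset of window values of $f_{(\D, \Upsilon)}$, and each multiplicity is precisely what the weight records on both sides.

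The main obstacle is the non-injectivity of $\Upsilon$. In \cite{GMV} the parking-function labels are distinct, so the resulting affine function is automatically a bijection and one lands in affine permutations for free. Here, repeated labels force the use of affine compositions, and one must verify both that $\cA$ surjects onto $\affc{}^n(m, r)$ (not a proper subset) and that distinct semistandard parking functions produce distinct affine compositions. This amounts to tracking how the semistandard inequality propagates through the diagonal reading order, and is exactly where the notion of $n$-stable affine composition must be calibrated so that the semistandard constraint matches the defining inequalities of $\affc{}^n(m, r)$ on the nose.
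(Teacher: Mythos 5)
Your approach is essentially the same as the paper's: enumerate vertical steps via the Anderson/diagonal labeling of slope $-m/n$ (equivalently, via the function $\gamma(a,b) = mn - ma - nb$), set the values of the affine function on the labels of vertical steps to be the $\Upsilon$-labels, extend by periodicity, and build the inverse by reading off which labels land in $[1,r]$. The identification of the main new issue --- repeated labels forcing compositions rather than permutations --- is correct and matches the paper's framing.

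Two imprecisions are worth flagging. First, the fundamental window has length $m$, not $n$: the defining periodicity of an $(m,r)$-affine composition is $f(x+m) = f(x)+r$, so $f$ is determined by $f(1),\dots,f(m)$. Your sketch of the inverse (``recover $\D$ by comparing the window values with their shifts'') only makes sense with the correct window length. Second, and more substantively, your construction as written produces a function satisfying only conditions (1) and (2) of the definition of an affine composition; condition (3), the normalization $\sum_{x \in f^{-1}[1,r]} x = \binom{m+1}{2}$, is not automatic. The paper handles this by first building an intermediate function $\tilde f$ from the raw Anderson labels and then translating by a uniquely determined integer $k$ to get $f$; the inverse correspondingly recovers the Dyck path via a shifted Anderson function $\gamma_f(a,b)=\gamma(a,b)+M_f$ where $M_f = \min\{d : f(d)\ge 0\}$. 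This shift is exactly what makes your ``compare window values with their shifts'' step well-defined, and omitting it leaves the forward map landing outside the target set. Once these two points are repaired your outline agrees with the paper's proof.
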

Combining this map with a bijection between affine compositions of a fixed weight $\bw$ and minimal coset representatives in $\minlength{\bw}{m}{}$, we construct a \newword{standardization map}:
\[
\std: \sspf{r}{m,n} \to \park{m,n}
\]
and for any $(\D, \Upsilon) \in \sspf{r}{m,n}$ define $\dinv(\D, \Upsilon) := \dinv(\std(\D, \Upsilon))$ (for a classical description in terms of laser rays of slope $\frac{-m}{n}$ see Appendix \ref{sec:appendix}). With this in hand, for $m,n$ any coprime positive integers, the \newword{rank r $(q,t)$-Catalan polynomial} is defined as:
\[
C^{(r)}_{(m,n)}(x_1, \dots, x_r;q,t) := \sum_{(\D, \Upsilon) \in \sspf{r}{m,n}}q^{\area(\D,\Upsilon)}t^{\dinv(\D, \Upsilon)}x^{\we(\D, \Upsilon)}. 
\]
These polynomials generalize the usual \emph{rational $(q,t)$-Catalan numbers} $C_{(m,n)}(q,t)$ in the sense that when $r=1$, we have $C^{(r)}_{(m,n)}(x_1;q,t) =
C_{(m,n)}(q,t) x_1^m$.

\subsection{The Hikita polynomial}\label{subsec:Hikita}
The ring of \emph{diagonal coinvariants} $DR_n$, which is isomorphic to the space of \emph{diagonal harmonics}, has been a central figure in some of the most exciting results in algebraic combinatorics and representation theory in the last two decades. Its bigraded Frobenius character was proven by Haiman \cite{Haiman} to be $\nabla e_n$, where $\nabla$ denotes the \emph{nabla operator} ubiquitous in theory of Macdonald polynomials and $(q,t)$ combinatorics. The classical \emph{Shuffle Theorem}, conjectured by Haglund
et. al.
\cite{HHLRU} and proven fifteen years later by Carlsson-Mellit \cite{CM}, gives a combinatorial formula for this character as a sum over parking functions $\park{n}$. 

This combinatorial formula was interpreted geometrically and generalized by Hikita by relating it to affine Springer fibers associated to regular semisimple nil elliptic elements. In particular, Hikita showed that the Frobenius character of the Borel-Moore homology of certain affine Springer fibers is given by an analogous combinatorial formula now indexed by \emph{$(m,n)$}-parking functions and expressed in terms of Gessel's quasisymmetric functions $Q_S(X)$. This resulting symmetric function is eponymously called the \newword{Hikita polynomial} and is defined as:
\[ \hik_{(m,n)}( X; q,t) := \sum_{(\D, \varphi) \in \park{m,n}}q^{\area(\D, \varphi)}t^{\dinv(\D, \varphi)}Q_{\des(\omega^{-1})}(X),\]
where $\omega$ is an affine permutation corresponding to $(\D, \varphi)$ under the map $\cA$, and $\des(\omega^{-1})$ is the \emph{descent set} of the permutation $\omega^{-1}$, see Section \ref{sec:parking} for a precise definition. Thus, the Hikita polynomial can equivalently be defined as the Frobenius character of the $\C [S_{m}]$ module $\C\!\park{m,n}$, where $m,n$ are coprime.

While it is well known that the coefficient of $Q_{\emptyset}$ in  $\hik_{(m,n)}( X; q,t)$ is given by $C_{m,n}(q,t)$, a more explicit relation between the Hikita polynomials and the $(q,t)$-Catalan numbers is only known in some special cases \cite{KK-Hikita}. Our first main theorem is that our higher rank Catalan polynomials are precisely truncations of the Hikita polynomial to a finite number of variables. Hence, for any coprime $m,n$, $C^{(r)}_{(m,n)}(x_1, \dots, x_r; q,t)$ is a symmetric polynomial that is both Schur positive and $q,t$-symmetric. 
\begin{theorem}[Thm. \ref{thm:Cat=Hik}]\label{thm:main1}
For any $r \in \Z_{>0}$ and coprime $m,n \in \Z_{>0}$, we have that 
\[C^{(r)}_{(m,n)}(x_1, \dots, x_r; q,t) = \hik_{(m,n)}(X;q,t)\vert_{x_1, \dots, x_r}.\]
\end{theorem}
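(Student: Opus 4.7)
The plan is to reduce the identity to a statement about the fibers of the standardization map $\std: \sspf{r}{m,n} \to \park{m,n}$. I recall that the truncation of Gessel's fundamental quasisymmetric function to $r$ variables is
$$Q_{S,m}(x_1,\ldots,x_r) = \sum_{\substack{1 \leq a_1 \leq \cdots \leq a_m \leq r \\ a_i < a_{i+1} \text{ if } i \in S}} x_{a_1} \cdots x_{a_m}.$$
Substituting this into the definition of $\hik_{(m,n)}(X;q,t)$ with $X=(x_1,\ldots,x_r)$ expresses the RHS of Theorem~\ref{thm:main1} as a double sum: an outer sum over $(\D,\varphi) \in \park{m,n}$ weighted by $q^{\area}t^{\dinv}$, and an inner sum over weakly increasing sequences $(a_1,\ldots,a_m) \in \{1,\ldots,r\}^m$ with forced strict increases at positions in $\des(\omega^{-1})$, where $\omega = \cA(\D,\varphi)$.

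The crux of the proof is to exhibit, for each $(\D,\varphi) \in \park{m,n}$, a weight-preserving bijection between the fiber $\std^{-1}(\D,\varphi) \subseteq \sspf{r}{m,n}$ and the set of sequences described above. Given $(\D,\Upsilon)$ in the fiber, I would set $a_i := \Upsilon(\varphi^{-1}(i))$, i.e. read the $\Upsilon$-labels in the total order prescribed by $\varphi$. The semistandard condition together with the definition of $\std$ forces this sequence to be weakly increasing, with strict increase at position $i$ required exactly when the canonical tie-breaking order used by $\std$ inverts the order that $\varphi$ places on the preimages $\varphi^{-1}(i), \varphi^{-1}(i+1)$; otherwise standardization would have swapped them. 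The claim is that these forced strict positions agree precisely with $\des(\omega^{-1})$, which I would verify by unpacking the affine composition bijection of Theorem~\ref{thm:bijectionIntro} and tracking how the minimal-coset-representative description of $\std$ interacts with the descent set of $\omega^{-1}$. The inverse map is immediate: given a compatible sequence, declare $\Upsilon(\varphi^{-1}(i)):=a_i$ and check membership in $\sspf{r}{m,n}$.

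With this bijection in hand the theorem follows by re-indexing: the outer and inner sums collapse into a single sum over $(\D,\Upsilon) \in \sspf{r}{m,n}$, the monomial $x_{a_1}\cdots x_{a_m}$ becomes $x^{\we(\D,\Upsilon)}$, the area descends from $\D$, and $\dinv(\D,\Upsilon)=\dinv(\std(\D,\Upsilon))=\dinv(\D,\varphi)$ by the very definition of $\dinv$ on semistandard parking functions. The main obstacle, and the step requiring the most care, is the descent identification—matching the set of indices at which standardization forces a strict increase with $\des(\omega^{-1})$. This is a compatibility statement bridging three pieces of machinery (the affine composition encoding, the tie-breaking rule hidden in $\std$, and the descent set of an affine permutation) and is where I expect the real combinatorial content of the proof to lie.
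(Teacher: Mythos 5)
Your proposal follows essentially the same route as the paper: the key bijection you describe between the fiber $\std^{-1}(\D,\varphi)$ and the weakly increasing sequences compatible with $\des(\omega^{-1})$ is exactly what the paper isolates as Theorem~\ref{thm:gessel vs standardization}, with the same definition $a_i:=\Upsilon(\varphi^{-1}(i))$ and the same tie-breaking argument via Lemma~\ref{lem:standardization} and the Anderson labels. The descent identification you flag as the delicate step is handled in the paper precisely as you anticipate — $i\in\des(\omega^{-1})$ iff $\gamma_{i+1}<\gamma_i$, since $\omega^{-1}(i)=\gamma_i+k$ for a constant shift $k$ — so your sketch is sound and matches the paper's argument.
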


It turns $DR_n$ can also be recovered from the representation theory of the rational Cherednik algebra $H_{n,c}$ of type A. In particular, Berest-Etingof-Ginzburg \cite{BEG} proved that when $c = \pm m/n$ with $m,n$ coprime, $H_{n,c}$ has a unique finite dimensional representation $L_{m/n}$. It is a theorem of Gordon \cite{gordon} that $ DR_n \cong L_{\frac{n+1}{n}} \otimes \sgn$. This framework was used by Gorsky-Negu\c{t} (see \S \ref{subsec:finiteshuffle}) to give a rational generalization of the Shuffle Theorem and conjecture that the bigraded Frobenius character of  $L_{m/n} \otimes \sgn$ is the Hikita polynomial \cite{GorskyNegut}. 

As mentioned in \S \ref{sec:introCatalanPol}, it is conjectured that a multiparametric deformation of the higher rank Catalan numbers $C_{(m,n)}^{(r)}$ corresponds to certain $\GLr$ characters. 
Indeed, in \cite{EKLS}, they considered a family of representations $L_{m/n}^{(r)}$ of quantized Gieseker varieties which as $\mathbb{C}^{\times} \times \GLr$-modules
are isomorphic to the $S_m$-invariants of $L_{n/m}\otimes (\mathbb{C}^{r*})^{\otimes m}$. It is expected that $L_{m/n}^{(r)}$ admits a compatible filtration, and thus $\gr L_{m/n}^{(r)}$ is a bigraded $\GLr$-representation. 

\begin{conjecture}
For any positive integers $m,n,r$ with $\gcd(m,n)=1$, the bigraded $\GLr$-character of $\gr L_{m/n}^{(r)}$ is given by $C^{(r)}_{(m,n)}(x_1, \dots, x_r; q,t)$.
\end{conjecture}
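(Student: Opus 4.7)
The plan is to combine the Schur--Weyl-type isomorphism $L_{m/n}^{(r)}\cong (L_{n/m}\otimes(\C^{r*})^{\otimes m})^{S_m}$ stated above with the (still open) Gorsky--Negu\c{t} identification of the bigraded $S_m$-Frobenius character of $L_{n/m}\otimes\sgn$, and then to invoke Theorem~\ref{thm:main1} to identify the resulting truncation of the Hikita polynomial with $C^{(r)}_{(m,n)}(x_1,\dots,x_r;q,t)$.

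First I would apply classical Schur--Weyl duality to $(\C^{r*})^{\otimes m}$ as an $S_m\times \GL_r$-module, yielding
\[
L_{m/n}^{(r)}\;\cong\;\bigoplus_{\lambda} \Hom_{S_m}(S^\lambda,L_{n/m})\,\otimes\, (V_\lambda^{\GL_r})^{*},
\]
so the $\GL_r$-character of $L_{m/n}^{(r)}$ is a finite-variable evaluation of the $S_m$-Frobenius character of $L_{n/m}$. This ungraded identity must then be upgraded to a bigraded statement: granted the conjectured compatible filtration on $L_{m/n}^{(r)}$, the corresponding filtration on $L_{n/m}$ that appears in Gorsky--Negu\c{t}'s statement propagates through the tensor product and through the $S_m$-invariants, giving both the $q$- and $t$-gradings on $\gr L_{m/n}^{(r)}$.

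Next, Gorsky--Negu\c{t}'s conjecture asserts that the bigraded Frobenius character of $L_{n/m}\otimes\sgn$ equals $\hik_{(m,n)}(X;q,t)$, so that $F(L_{n/m})=\omega\hik_{(m,n)}(X;q,t)$. The $\sgn$ twist neatly matches the distinction between $(\C^{r*})^{\otimes m}$ and $(\C^r)^{\otimes m}$: it converts the $V_\lambda^*$-side of the Schur--Weyl decomposition into honest Schur polynomials $s_\lambda(x_1,\dots,x_r)$, so the application of $\omega$ coming from the $\sgn$ twist in Gorsky--Negu\c{t} cancels the $\omega$ introduced by the duality. The bigraded $\GL_r$-character of $\gr L_{m/n}^{(r)}$ then becomes $\hik_{(m,n)}(X;q,t)|_{x_1,\dots,x_r}$, which by Theorem~\ref{thm:main1} equals $C^{(r)}_{(m,n)}(x_1,\dots,x_r;q,t)$.

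The hardest part is not the Schur--Weyl bookkeeping above but the two representation-theoretic inputs. The Gorsky--Negu\c{t} conjecture itself remains open in general, and the very existence of a compatible filtration on $L_{m/n}^{(r)}$ is conjectural; this would presumably be established via Losev's theory of filtrations on finite-dimensional modules over quantizations of symplectic resolutions, after which one must match the induced second grading with the one used by Gorsky--Negu\c{t}. An attractive alternative strategy would sidestep the Gorsky--Negu\c{t} conjecture altogether: endow $\gr L_{m/n}^{(r)}$ directly with a $\DAHA{r}$-action through the quantum Hamiltonian reduction realization of the quantized Gieseker variety, identify $\gr L_{m/n}^{(r)}$ with the cyclic $\DAHA{r}$-submodule of $\Symr$ generated by $1$, and then invoke the finite rational shuffle theorem proven in this paper to compute the $\GL_r$-character automatically.
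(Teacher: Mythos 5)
This statement is a \textbf{Conjecture} in the paper, not a theorem; the authors offer no proof, so there is nothing to compare your argument against. What you have written is also not a proof: as you yourself flag, it depends on two unresolved conjectural inputs, namely (i) the existence of a compatible filtration on $L_{m/n}^{(r)}$ making $\gr L_{m/n}^{(r)}$ bigraded, and (ii) the Gorsky--Negu\c{t} identification of the bigraded Frobenius character of $L_{n/m}\otimes\sgn$ with $\hik_{(m,n)}(X;q,t)$. Your argument is therefore a conditional reduction of one conjecture to two others plus Theorem~\ref{thm:main1}, which is essentially the heuristic the authors themselves sketch in \S\ref{sec:introCatalanPol} and \S\ref{subsec:Hikita} as motivation for stating the conjecture at all.

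The Schur--Weyl bookkeeping itself is sound: starting from $L_{m/n}^{(r)}\cong(L_{n/m}\otimes(\C^{r*})^{\otimes m})^{S_m}$, classical duality does translate the $S_m$-Frobenius character of $L_{n/m}$ into the $\GLr$-character of $L_{m/n}^{(r)}$ as a truncation to $r$ variables, and the $\sgn$ twist matches the $\omega$ introduced by the dual $\GLr$-factors, so that Theorem~\ref{thm:main1} would close the loop \emph{if} (i) and (ii) were established. The step you have not justified is the crux: you assert that the conjectured filtration on $L_{m/n}^{(r)}$ ``propagates'' from the one on $L_{n/m}$ through the tensor product and the $S_m$-invariants in a way that matches both gradings, but this compatibility is not automatic and is precisely what makes the conjecture nontrivial even granted (ii). Your alternative suggestion --- realize $\gr L_{m/n}^{(r)}$ as the cyclic $\DAHA{r}$-submodule of $\Symr$ generated by $1$ and invoke Theorem~\ref{thm:FiniteShuffle} --- is an appealing direction and would genuinely circumvent the Gorsky--Negu\c{t} input, but as written it is only a plan: identifying $\gr L_{m/n}^{(r)}$ with that cyclic module (and matching the $\DAHA{r}$-action with the geometric one constructed in \S\ref{sec:DAHA}) is itself an open problem, not something this paper provides.
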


\subsection{Affine Springer fibers}\label{sec:asf intro}The $\dinv$ statistic on $\sspf{r}{m,n}$ can also be obtained via a geometric construction using affine Springer fibers. Let us fix coprime positive integers $m, n$.  We will consider a nill-elliptic element (in the sense of \cite{KazhdanLusztig}) $\gamma = \gamma_{(m,n)}$ of the loop Lie algebra $\mathfrak{sl}_{m}[[\epsilon]]$ and various affine Springer fibers for it. More precisely, consider a composition $\mathbf{w} := (w_1, \dots, w_r)$ of $m$ with exactly $r$ parts, some of which may be zero \footnote{compositions whose parts are allowed to be 0 are often called \emph{weak compositions}
\label{footnote-composition}.
Since all compositions in this paper are weak, we will omit the adjective weak and simply call them compositions.}.
Associated to this composition, we have a parahoric subgroup $P^{\bw}$ of $\mathrm{SL}_{m}[[\epsilon]]$ and the partial affine flag variety:
\[
\Fl^{\bw} := \mathrm{SL}_{m}((\epsilon))/P^{\bw}.
\]
The variety $\mathcal{F}\ell^{\bw}$ is an ind-projective variety, that is, an inverse limit of projective varieties, and it is infinite-dimensional. However,  due to results of Kazhdan-Lusztig and Lusztig-Smelt, \cite{KazhdanLusztig, LusztigSmelt}, the affine Springer fiber
\[
X^{\bw}_{(m,n)} :=  \{[g] \in \Fl^{\bw} \mid g^{-1}\widetilde{\gamma}g \in \mathrm{Lie}(P^{\bw})\}
\]
is a projective variety of finite dimension $(m-1)(n-1)/2$. We will consider the space:
\[
X := \bigsqcup_{\mathbf{w}}X^{\bw}_{(m,n)}
\]
where the disjoint union runs over all compositions of $m$ with exactly $r$ parts.
This $X$ can be constructed as a single \emph{generalized} affine Springer fiber, similar to \cite[Section 8.4]{GSV}.
\begin{theorem}[Thm. \ref{thm:geo dinv}] \label{thm:main2} 
The variety $X$ admits an affine paving with affine cells indexed by the set $\sspf{r}{m,n}$. Moreover, if $C^{(\D, \Upsilon)}$ is the affine cell indexed by the semistandard parking function $(\D, \Upsilon)$ then
\[
\dinv(\D, \Upsilon) = \frac{(m-1)(n-1)}{2} - \dim_\C(C^{(\D, \Upsilon)}).
\]
\end{theorem}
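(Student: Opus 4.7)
The plan is to extend the Białynicki--Birula paving of Gorsky--Mazin--Vazirani \cite{GMV} from the Iwahori case to arbitrary parahoric subgroups $P^\bw$ and then assemble the resulting pavings across all compositions $\bw$ of $m$ into $r$ parts. To begin, I would fix such a composition $\bw$ and transport the $\C^\times$-action used in \cite{GMV} (a cocharacter preserving $\gamma$ up to scalar and hence preserving the corresponding affine Springer fiber) from $\Fl^{(1^m)}$ to $\Fl^\bw$ via the $\C^\times$-equivariant projection $\pi_\bw\colon \Fl^{(1^m)}\to \Fl^\bw$. The induced BB-decomposition of $\Fl^\bw$ then restricts to an affine paving of the projective variety $X^\bw_{(m,n)}$, with cells indexed by the $\C^\times$-fixed points contained in it.

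Next, I would identify these fixed points. The torus-fixed points of $\Fl^\bw$ are indexed by the coset space $S_\bw\backslash \widetilde{S}_m$, and each coset is canonically represented by its minimal length element; the fixed points that lie inside $X^\bw_{(m,n)}$ correspond to the $n$-stable minimal length representatives, i.e.\ to the $n$-stable elements of $\minlength{\bw}{m}{}$. Taking the disjoint union over all compositions $\bw$ of $m$ with $r$ parts yields $\affc{n}(m,r)$, which is in weight-preserving bijection with $\sspf{r}{m,n}$ by Theorem \ref{thm:bijectionIntro}. This indexes the cells of the claimed paving of $X = \bigsqcup_\bw X^\bw_{(m,n)}$ by $\sspf{r}{m,n}$.

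For the dimension formula, I would use the standardization map $\std\colon \sspf{r}{m,n}\to \park{m,n}$, which under $\cA$ corresponds to viewing the minimal length representative $\omega_{\min}\in \widetilde{S}_m$ of a coset in $S_\bw\backslash \widetilde{S}_m$ as an affine permutation in its own right; this permutation is automatically $n$-stable. The BB-cell $C^{(\D,\Upsilon)}\subset X^\bw_{(m,n)}$ at the fixed point indexed by $\omega_{\min}$ is the image under $\pi_\bw$ of the Iwahori-level BB-cell $C^{\std(\D,\Upsilon)}\subset X^{(1^m)}_{(m,n)}$, and because $\omega_{\min}$ is a minimal length coset representative, $\pi_\bw$ restricts to an isomorphism between them. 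Combining with the GMV dimension formula and the definition of $\dinv$ on $\sspf{r}{m,n}$ then gives
\[
\dim_\C C^{(\D,\Upsilon)} = \dim_\C C^{\std(\D,\Upsilon)} = \frac{(m-1)(n-1)}{2} - \dinv(\std(\D,\Upsilon)) = \frac{(m-1)(n-1)}{2} - \dinv(\D,\Upsilon).
\]

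The main obstacle I anticipate is the compatibility between the Iwahori and parahoric BB-decompositions at the cell level: one must check that for each $n$-stable minimal length coset representative $\omega_{\min}$, the projection $\pi_\bw$ really does restrict to an isomorphism from the Iwahori affine Springer cell $C^{\omega_{\min}}$ onto an affine cell of $X^\bw_{(m,n)}$, and that these images partition $X^\bw_{(m,n)}$. This requires analyzing how the centralizer condition cutting out the affine Springer fiber interacts with the finite-type Schubert geometry in the fibers of $\pi_\bw$, and is where the bulk of the technical work, extending GMV's Iwahori-level analysis, will be concentrated.
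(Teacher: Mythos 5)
Your overall framework (torus fixed points indexed by minimal-length coset representatives, reduction to the Iwahori case via standardization, pulling back the GMV dimension formula) is the same as the paper's, but there are two places where you assert steps that the paper has to prove and that are not automatic.

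The first is the statement that "the induced BB-decomposition of $\Fl^\bw$ then restricts to an affine paving of the projective variety $X^\bw_{(m,n)}$." Because $X^\bw_{(m,n)}$ is singular, the Bia\l ynicki--Birula strata are only locally closed subvarieties \emph{a priori}; the fact that they are isomorphic to affine spaces is exactly the content of Theorem~\ref{thm:geo dinv}, which the paper establishes by an explicit parametrization of the cell $C_f$ by coefficients $\lambda^p_\alpha$ attached to inversions of $f$, together with a careful analysis of how $\gamma$-invariance imposes relations. The paper never actually invokes abstract BB theory for this: cells are defined directly by the condition $\nu(L_i)\setminus\nu(L_{i+1})=f^{-1}(i)$, and affineness is proved hands-on.

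The second, and more serious, gap is the claim that "because $\omega_{\min}$ is a minimal length coset representative, $\pi_\bw$ restricts to an isomorphism" between $C^{\std(\D,\Upsilon)}$ and $C^{(\D,\Upsilon)}$. The Schubert-cell analogy does give that $\pi_\bw$ is an isomorphism between the ambient Schubert cell of $\Fl^{(1^m)}$ at $\omega_{\min}$ and its image in $\Fl^{\bw}$, but the affine Springer condition cuts these down differently: at the Iwahori level one requires $\gamma$-invariance of \emph{every} subspace $M_t$ in the refined chain, while at the parahoric level only of the coarser chain $(L_i)$. So the image of the Iwahori Springer cell under $\pi_\bw$ is automatically contained in $C^{(\D,\Upsilon)}$ but not obviously equal to it; to show equality one must verify that the unique refinement of each $(L_i)\in C^{(\D,\Upsilon)}$ landing in the Schubert cell of $\omega_{\min}$ is actually $\gamma$-stable. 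The paper sidesteps this by proving only \emph{injectivity} of $\pi_\bw$ on $C^{\std(f)}$ (Theorem~\ref{thm:geo std}), which yields $\dim_\C C_f \geq \codinv(f)$ (Corollary~\ref{cor:bound dim}), and then obtains the reverse inequality $\dim_\C C_f \leq \codinv(f)$ independently by the parameter count in Theorem~\ref{thm:geo dinv}, without ever establishing surjectivity. You correctly flag this compatibility as "where the bulk of the technical work will be concentrated" --- the paper's approach is precisely a way of getting the dimension formula while avoiding that harder isomorphism claim.
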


In order to prove Theorem \ref{thm:main2}, we make use of the bijection $\cA: \sspf{r}{m,n} \to \affc{n}(m,r)$ described above since the affine cells in Theorem \ref{thm:main2} are more naturally indexed by affine compositions. When restricted to parking functions, the map $\cA$ becomes the bijection studied in detail by Gorsky, Mazin and the third author
in \cite{GMV}, after work of Hikita in \cite{hikita}. Thus, our construction may be seen as a parabolic version of theirs.

\subsection{DAHA and a Finite Rational Shuffle Theorem}\label{subsec:finiteshuffle} 
Although the Shuffle Theorem is an inherently combinatorial statement, its rational generalization originally arose from topological considerations. 
In their search for a description of the triply graded Khovanov–Rozansky homology of the $(m, n)$- torus knot, Gorsky-Oblomkov-Rasmussen-Shende \cite{GORS} conjectured that this homology could be extracted from the unique finite dimensional representation $L_{m/n}$ of the rational Cherednik algebra $H_{n,c}$ previously mentioned in \S \ref{subsec:Hikita}. This connection led Gorsky and Negu\c{t} \cite{GorskyNegut} to make the remarkable conjecture that the Hikita polynomial $\hik_{(m,n)}(X; q,t)$ could be obtained from the \emph{geometric} action of Schiffmann-Vasserot \cite{SchiffmannVasserotK} of the \newword{elliptic Hall algebra} $\EHA$ on symmetric functions $\Sym$. Proved by Mellit \cite{Mellit-rational}, the \newword{Rational Shuffle Theorem} states that for each generator $P_{(m,n)} \in \EHA$ with $m,n$ coprime:
\begin{equation}\label{eq:shuffleintro}
P_{(m,n)}\cdot 1 = \hik_{(m,n)}(X; q,t).
\end{equation}
 By a beautiful result of Schiffmann-Vasserot \cite{SchiffmannVasserotK,SchiffmannVasserotMac}, the elliptic Hall algebra arises as the inverse limit of Cherednik's \newword{spherical double affine Hecke algebras} of type $\mathfrak{gl}(r)$ $\DAHA{r}$ \cite{Cherednik}, 
 \[
\EHA = \varprojlim_{r \to \infty} \DAHA{r},
\]
under which the generators $P^{(r)}_{(m,n)}$ of $\DAHA{r}$ are mapped precisely to those of $\EHA$. Hence, $\DAHA{r}$ may be thought of as an $r$-variable truncation of the (positive part of) the elliptic Hall algebra $\EHA$.

Our Theorem \ref{thm:main1} connects the higher rank Catalan polynomial $C^{(r)}_{(m,n)}(x_1, \dots, x_r; q,t)$ with $\DAHA{r}$ as follows. The algebra $\DAHA{r}$ comes equipped with a \emph{polynomial} representation $\pol{r}$ on the algebra of symmetric polynomials $\Symr$, defined by Cherednik in terms of Macdonald operators \cite{Cherednik}. Schiffmann-Vasserot show that under the inverse limit above, this polynomial representation induces a \emph{polynomial} representation of $\EHA$ on $\Sym$ that is isomorphic to its geometric representation via the same plethysm that interchanges the Macdonald polynomials $P_\lambda$ with the modified Macdonald polynomials $\tilde{H}_\lambda$ \cite{SchiffmannVasserotK, GorskyNegut}. By defining a `truncated' plethysm on $\Symr$, we show these constructions induce a \emph{geometric} action of $\DAHA{r}$ that yields a truncated Hikita polynomial, in the sense of \eqref{eq:shuffleintro}. Combined with Theorem \ref{thm:main1}, we obtain a \newword{finite Rational Shuffle Theorem}.

\begin{theorem}[Thm. \ref{thm:FiniteShuffle}]\label{thm:main3}
There exists an action of the spherical DAHA $\DAHA{r}$ on $\Symr$ such that, for positive coprime $m$ and $n$ we have:
\[
P^{(r)}_{(m,n)}\cdot 1 = C^{(r)}_{(m,n)}(x_1, \dots, x_r; q,t).
\]
This action is isomorphic to the polynomial representation $\pol{r}$. 
\end{theorem}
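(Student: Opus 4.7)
The plan is to produce a geometric action $\georep_r$ of $\DAHA{r}$ on $\Symr$ as a twist of Cherednik's polynomial representation $\pol{r}$ by a finite-rank analogue of the Schiffmann-Vasserot plethystic intertwiner, and then to deduce the shuffle identity by combining Mellit's Rational Shuffle Theorem for $\EHA$ with our Theorem \ref{thm:main1}. Recall that Schiffmann-Vasserot's isomorphism $\EHA \cong \varprojlim \DAHA{r}$ sends $P^{(r)}_{(m,n)} \mapsto P_{(m,n)}$, and that the polynomial representations $\pol{r}$ on $\Symr$ assemble under the inverse limit into a representation of $\EHA$ on $\Sym$ which is intertwined with the geometric representation via the plethysm $\Phi\colon X\mapsto X/((1-q)(1-t))$, sending each Macdonald polynomial $P_\lambda$ to a scalar multiple of the modified Macdonald polynomial $\tilde{H}_\lambda$.

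First I would define a truncated plethysm $\Phi_r\colon\Symr\to\Symr$ by prescribing, on the basis $\{\tilde{H}_\lambda \mid \ell(\lambda)\leq r\}$ of $\Symr$, the same scalar relations to $\{P_\lambda \mid \ell(\lambda)\leq r\}$ that $\Phi$ satisfies in infinitely many variables, and extending linearly. This yields an invertible linear endomorphism of $\Symr$, and the new action
\[
\georep_r(a) := \Phi_r \circ \pol{r}(a) \circ \Phi_r^{-1}, \qquad a\in\DAHA{r},
\]
is tautologically isomorphic to $\pol{r}$, giving the second assertion of the theorem. I would then verify that the family $\{\georep_r\}_r$ is compatible with the Schiffmann-Vasserot inverse limit, i.e.\ that under the truncation tower $\Sym\twoheadrightarrow\Symr$ the actions $\georep_r$ glue into the Schiffmann-Vasserot geometric representation of $\EHA$ on $\Sym$; this amounts to checking that $\Phi_r$ intertwines the restriction map $\Sym\twoheadrightarrow\Symr$ with $\Phi$, which is built into the definition on the modified Macdonald basis. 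Mellit's theorem in the geometric representation then gives $P_{(m,n)}\cdot 1 = \hik_{(m,n)}(X;q,t)$ in $\Sym$, and restricting to $r$ variables produces
\[
P^{(r)}_{(m,n)}\cdot_{\georep_r} 1 \;=\; \hik_{(m,n)}(X;q,t)\big|_{x_1,\dots,x_r} \;=\; C^{(r)}_{(m,n)}(x_1,\dots,x_r;q,t),
\]
where the last equality is exactly Theorem \ref{thm:main1}.

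The hard part will be giving a rigorous, intrinsic definition of the truncated plethysm $\Phi_r$ and establishing its compatibility with the inverse limit. The full plethysm $\Phi$ is an infinite-variable substitution that does not stabilize $\Symr\subset\Sym$, so $\Phi_r$ cannot be defined by any naive restriction; it must be characterized via its action on a carefully chosen basis (the modified Macdonald basis being the most natural, since its elements are the images of the $P_\lambda$ under $\Phi$ up to scalars) and shown to intertwine the truncation maps. Once this is done, the remaining steps are essentially formal consequences of Schiffmann-Vasserot's limit construction, Mellit's shuffle theorem, and our Theorem \ref{thm:main1}, with the isomorphism $\georep_r\simeq\pol{r}$ holding by construction.
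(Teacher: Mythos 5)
Your overall strategy — twist $\pol{r}$ by a truncated plethysm and deduce the shuffle identity from compatibility with the Schiffmann--Vasserot limit plus Mellit's theorem and Theorem \ref{thm:main1} — matches the structure of the paper's argument. The differences are in the technical realization, and one of them hides a genuine gap.

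The smaller issue is your choice of basis. The paper defines its truncated plethysm $\overline{\Phi}$ on the power sums $p_1,\dots,p_r$, which are algebraically independent generators of $\Symr$; the map $p_k \mapsto \frac{1}{1-t^{-k}}p_k$ is then automatically an algebra automorphism of $\Symr$, with no linear-independence check needed. You instead define $\Phi_r$ by sending $P_\lambda|_{\Symr}$ to a scalar multiple of $\tilde{H}_\lambda|_{\Symr}$ for $\ell(\lambda)\leq r$, which requires knowing that $\{\tilde{H}_\lambda|_{\Symr}\,:\,\ell(\lambda)\leq r\}$ is a basis of $\Symr$ over $\C(q,t)$. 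This is in fact true (the transition matrix to $\{s_\lambda|_{\Symr}\,:\,\ell(\lambda)\leq r\}$ specializes to the identity at $q=t=0$, hence is invertible over $\C(q,t)$), but it needs to be stated and proved — it is not a standard fact, and the modified Macdonald polynomials do \emph{not} in general interact well with truncation to finitely many variables.

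The more serious problem is the compatibility claim, which you describe as ``built into the definition.'' It is not. You need $\xi_r \circ \Phi = \Phi_r \circ \xi_r$ to hold at least on the image of $\Pol(\EHA)\cdot 1$, and this \emph{cannot} hold on all of $\Sym$: the plethysm $\Phi$ does not preserve $\ker(\xi_r)$, because plethysm introduces higher power sums. The paper resolves this by restricting to the subalgebra $\Sym^{(r)}\subseteq\Sym$ generated by $p_1,\dots,p_r$, on which $\xi_r$ is an isomorphism onto $\Symr$ and on which $\Phi$ acts by scaling power sums (Lemma \ref{lem:commuting}). But then one still needs the crucial fact that the polynomial $\EHA$-action $\Pol$ preserves $\Sym^{(r)}$ — this is Lemma \ref{lem:preserve}, proved by reducing to the generators $P_{(1,0)}$ (multiplication by $p_1$) and $P_{(0,1)}$ (a Macdonald difference operator whose eigenfunctions $P_\lambda$ with $\ell(\lambda)\leq r$ span $\Sym^{(r)}$). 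Without this preservation statement, the intermediate quantity $\Pol(P_{(m,n)})\cdot 1$ has no reason to lie in a subspace where $\xi_r\circ\Phi$ and $\Phi_r\circ\xi_r$ agree, and the ``restrict to $r$ variables'' step of your argument does not go through. You flag the truncated plethysm as ``the hard part,'' but the preservation lemma — not the definition of $\Phi_r$ — is where the real content lies, and your outline has no analogue of it.

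One further structural remark: you build the $\DAHA{r}$-action directly by conjugating $\pol{r}$, which makes existence of the action trivial but shifts all the weight onto the compatibility argument. The paper instead defines $\Psi_r^{G}$ as an $\EHA$-representation $\xi_r\circ\Geom\circ\overline{\xi_r}$ and shows it factors through $\DAHA{r}$ by a kernel computation using faithfulness of $\pol{r}$; this keeps the shuffle identity and the factorization through $\DAHA{r}$ as two separable steps, both of which rest on the same two lemmas.
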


It is important to note that explicitly computing the action of Theorem \ref{thm:main3} is a nontrivial task. Indeed, while we understand the precise isomorphism relating it to the polynomial representation of $\DAHA{r}$, the map, which stems from plethystic substitution, is complicated. We plan to make this representation precise in future work. Given the deep connections between the geometric representation of $\EHA$ and important questions in low dimensional topology and algebraic geometry, it is likely similar connections exist in our case. 

\subsection{Structure of the paper} In \S \ref{sec:sspf} we review the basic notions of (semi)standard parking functions, Dyck paths, Catalan numbers, and the Hikita polynomial, as well as prove Theorem \ref{thm:coprimeIntro}. Paving the way to the geometric interpretation of dinv, in \S \ref{sec:affcomp} we introduce affine compositions and prove Theorem \ref{thm:bijectionIntro}, where an explicit construction of this bijection is provided.  In \S \ref{sec:standardization} we introduce the standardization map which we use to give a formula for the $r$-variable truncation of Gessel's quasisymmetric functions in Theorem \ref{thm:gessel vs standardization}, before proving Theorem \ref{thm:main1}. We move to the geometric setting in \S \ref{sec:springer}, where after preliminaries in \S \ref{sec:affflags}--\ref{sect: fixed points} we give a geometric interpretation in \S\ref{sec:geo standardization} before proving  Theorem \ref{thm:main2} in \S\ref{sec:dimension cells}.  In \S \ref{sec:DAHA} we recall the action of the elliptic Hall algebra on the algebra of symmetric functions, as well as the relation between the elliptic Hall algebra and the spherical double affine Hecke algebras, and use this to prove Theorem \ref{thm:main3}. In \S \ref{sec:noncoprime} we find a combinatorial formula, in the spirit of Bizley \cite{bizley}, for $C^{(r)}_{(m,n)}$ in the case where $m$ and $n$ are not coprime. Finally, in Appendix \ref{sec:appendix} we describe how the $\dinv$-statistic can equivalently be defined in terms of laser-rays, showing in particular that for $r = 1$ our $\dinv$ statistic coincides with the usual $\dinv$ statistic on Dyck paths.

\subsection*{Acknowledgments} We are grateful to Sami Assaf, Eugene Gorsky, Oscar Kivinen, Vasily Krylov, Andrei Negu\c{t}, Pablo Ocal, and Jacinta Torres for stimulating conversations and correspondence. M. Vazirani was partially supported
by Simons Foundation Grant 707426.

\section{Semistandard parking functions}\label{sec:sspf}

In this section, after reviewing basic facts about Dyck paths and parking functions, we define a dinv statistic on the set of semistandard parking functions and prove Theorem \ref{thm:main1}. 

Henceforth, unless explicitly stated otherwise, $m$ and $n$ will always denote {\bf coprime} positive integers. 

\subsection{Dyck paths and $(q,t)$-Catalan numbers}\label{ssec:dyck paths}  
Let $R_{(m,n)} \subseteq \R^2$ denote the rectangle with corners $(0,0), (n,0), (0,m)$ and $(n,m)$. The diagonal going from the northwest to the southeast corner of this rectangle has equation $mx + ny - mn = 0$ (see Figure \ref{fig:strip}). We will identify a unit square in the lattice $\Z^2$ by its upper right corner, and refer to these cells as `boxes'.  

For $m,n \in \Z_{>0}$ coprime, define the function $\gamma:\Z^2 \to \Z$ by 
\begin{equation}\label{eq:anderson}
\gamma(x,y):= mn-mx-ny.
\end{equation}
For any box $(x,y) \in \Z^2$, we call the value $\gamma(x,y)$ the \newword{Anderson label} of the box, after Anderson who first studied these labels in \cite{Anderson}.  

\begin{remark}\label{rmk:left diagonal}
Note that a box $(a,b)$ is weakly to the left of the diagonal line $mx + ny - mn = 0$ if and only if $\gamma(a,b) \geq 0$. 
See Figure \ref{fig:strip}.
\end{remark}

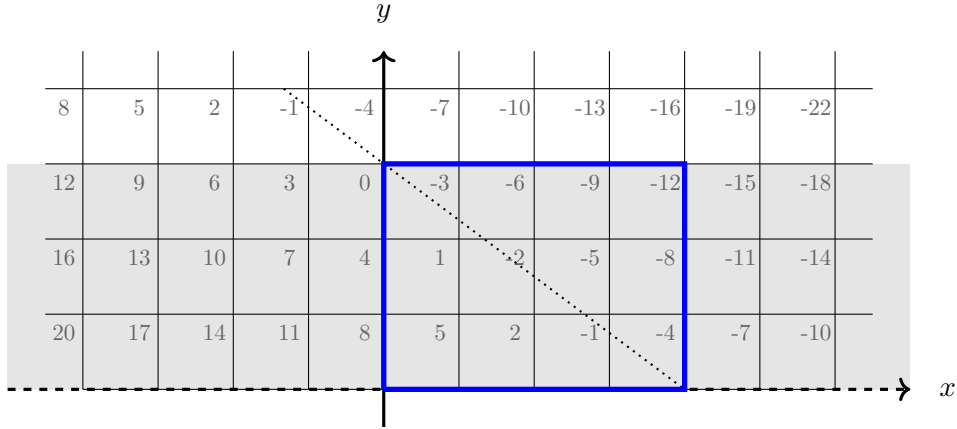
\begin{figure}[ht]
\begin{tikzpicture}

\fill[fill=gray, fill opacity = 0.2] (-1,0)--(-1,3)--(11,3)--(11,0)--cycle;


 \pgfmathtruncatemacro{\m}{4} 
  \pgfmathtruncatemacro{\n}{10} 
 \pgfmathtruncatemacro{\mi}{\m-1} 
    \pgfmathtruncatemacro{\ni}{\n-1} 
  \foreach \x in {0,...,\n} 
    \foreach \y in {1,...,\m}     
       {\pgfmathtruncatemacro{\label}{ - 3 *( \x-4) - 4 *  \y +12} 
       \node  at (\x-.25,\y-.25) [black!60!white, scale=.8]{\label};} 
\draw[ thick, dotted] (2.67,4)--(8,0); 
  \foreach \x in {0,...,\n} 
    \foreach \y  [count=\yi] in {0,...,\mi}   
      \draw (\x,\y)--(\x,\yi);
        \foreach \y in {0,...,\m}     
       \foreach \x  [count=\xi] in {0,...,\ni}   
         \draw (\x,\y)--(\xi,\y);



 \node at (4,5){$y$};
 \node at (11.5,0){$x$};
   \foreach \x in {1,...,4} 
 \draw(-.5,\x)--(0,\x);
 \foreach \x in {1,...,4} 
 \draw(10,\x)--(10.5,\x);
 \foreach \x in {0,...,10} 
 \draw(\x,4)--(\x,4.5);
 \draw[very thick,->] (4,-.5)--(4,4.5);
 \draw[very thick,dashed, ->] (-1,0)--(11,0);

 \draw[line width = 2pt,blue] (4,0)--(8,0)--(8,3)--(4,3)-- (4,0);
\end{tikzpicture}

\caption{The horizontal strip $\strip_{3}$ in gray.
The Anderson labels given by the map $\gamma$ are  marked.
Observe that non-negative labels occur weakly below the dotted diagonal.
 Note that the map $\gamma: \strip_{3} \to \Z$ is bijective.
The rectangle $R_{(3,4)}$ is
outlined in {\color{blue} blue}. 
}
\label{fig:strip}
\end{figure}

Recall that an \newword{$(m,n)$-Dyck path} $\D$ is a lattice path consisting of east and south steps that starts at $(0,m)$, ends at $(n,0)$, and stays below the diagonal line $mx + ny - mn = 0$. We will denote by $\dyck{m,n}$ the set of all $(m,n)$-Dyck paths.

It is well-known, see e.g. \cite{bizley}, that for $\gcd(m,n) = 1$ one has:
\[
|\dyck{m,n}| = C_{(m,n)} = \frac{1}{m+n}\binom{m+n}{m}.
\]

We will consider the following statistics on the set $\dyck{m,n}$. The \emph{area} of $\D \in \dyck{m,n}$ is the number of boxes in $R_{(m,n)}$ completely contained between the path $\D$ and the diagonal $mx + ny - mn = 0$. In light of Remark \ref{rmk:left diagonal}, this statistic is equivalently seen as the number of points $(x,y) \in R_{(m,n)}$ strictly above $\D$ for which $\gamma(x,y) > 0$.  
The other statistic, \emph{dinv} \cite{ALW, Loehr} is more complicated and will be explained in greater generality below.  We set:
\begin{equation}\label{eq:dinv-codinv}
\coarea(\D) := \frac{(m-1)(n-1)}{2} - \area(\D), \qquad \codinv(\D) := \frac{(m-1)(n-1)}{2} - \dinv(\D).
\end{equation}
It is known that $\area(\D), \dinv(\D)$ are non-negative integers.

\begin{definition}\cite{ALW}
The $(m,n)$ \emph{rational $(q,t)$-Catalan number} is the polynomial
\begin{equation}
C_{(m,n)}(q,t) := \sum_{\D\in \dyck{m,n}}q^{\area(\D)}t^{\dinv(\D)}.
\end{equation}
\end{definition}

The polynomial $C_{(m,n)}(q,t)$ presents a number of remarkable features, for example it is $(q,t)$-symmetric and can be written as the following expression of $q$-binomials,
\[
C_{(m,n)}(q,q^{-1}) = \frac{1}{[m+n]_{q}}\qbinom{m+n}{m}.
\]

\subsection{Parking functions and the Hikita polynomial}\label{sec:parking} Clearly, a Dyck path $\D \in \dyck{m,n}$ has precisely $m$-vertical steps. Let us denote by $\ver(\D)=\lbrace v_1, \dots, v_m \rbrace$ the set of vertical steps of the Dyck path $\D$.
\begin{definition}\cite{ALW}
An \emph{$(m,n)$-parking function} is a pair $(\D, \varphi)$ where:
\begin{enumerate}
\item $\D \in \dyck{m,n}$ is an $(m,n)$-Dyck path and
\item $\varphi$ is a bijection $\varphi: \ver(\D) \to \{1, \dots, m\}$,
\end{enumerate}
such that, if $v_i, v_{i+1}$ are two \emph{consecutive} vertical steps with $v_i$ above $v_{i+1}$, then $\varphi(v_i) < \varphi(v_{i+1})$. We will denote by $\park{m,n}$ the set of all $(m,n)$-parking functions. 
\end{definition}

We recall that the \newword{affine symmetric group} is the group of $m$-periodic bijections on $\Z$ under composition:
\[
\AS_m:=\left\{ \sigma: \Z \to \Z \; \vert \; 
\sigma \text{ is a bijection, } \sigma(x+m)=\sigma(x)+m, \text{ and } \sum_{i=1}^m \sigma(i) = {{m+1}\choose{2}}\right\}.
\]
By the periodicity condition, any $\sigma \in \AS_m$ is completely determined by $\sigma(1), \dots, \sigma(m)$. Thus, we will use window notation and write $\sigma = [\sigma(1), \dots, \sigma(m)]$. We say $\sigma \in \AS_m$ is \emph{n-stable} if $\sigma(x+n) > \sigma(x)$ for all $x \in \Z$ and denote by $\AS_m^n$ the set of all $n$-stable affine permutations. 

Parking functions admit analogous statistics to those of Dyck paths. Given any $(\D, \varphi) \in \park{m,n}$, its $\emph{area}$ is just the area of its underlying Dyck path,
\begin{equation}
\area(\D, \varphi) := \area(\D).
\end{equation}
To define the dinv of a parking function, we will use a natural bijection defined originally in \cite{GMV}:
\begin{equation}
\cA: \park{m,n} \to \affsym{m}{n}.
\end{equation}
In \S \ref{sec:affcomp} we will generalize this map to the parabolic setting, so we defer an explicit description of this bijection until then. Assuming the existence of the map $\A$, we define:
\begin{equation}
\codinv(\D, \varphi) := |\{(i, h) \in \{1, \dots, m\} \times \{1, \dots, n\} \mid \omega(i+h) < \omega(i)\}|
\end{equation}
where $\omega := \cA(\D, \varphi)$, so that by \eqref{eq:dinv-codinv},  $\dinv(\D, \varphi) = \frac{(m-1)(n-1)}{2} - \codinv(\D, \varphi)$.   

Let us proceed with the definition of the Hikita polynomial \cite{hikita}, we follow \cite{GM}. Given $\omega \in \AS_m$, recall that a \emph{descent} of $\omega^{-1}$ is an element $j \in \{1, \dots, m-1\}$ such that $\omega^{-1}(j+1) < \omega^{-1}(j)$. We let $\des(\omega^{-1})$ denote the set of descents of $\omega^{-1}$. 

Introduced by Gessel in \cite{Gessel}, the \emph{fundamental quasisymmetric functions} $Q_{S}(X)$ are a family, indexed by finite sets $S$, that form a basis for the space of quasisymmetric functions. 
\begin{equation}\label{eq:gessel}
Q_{S}(X) := \sum_{\substack{i_1 \leq \cdots \leq i_m \\ i_j < i_{j+1} \; \text{if} \; j \in S}}x_{i_{1}}\dots x_{i_{m}}.
\end{equation}

For each coprime pair $m,n \in \Z_{>0}$, the \newword{Hikita polynomial} is the following expression \cite{hikita}:
\begin{equation}\label{eq:Hikita}
\hik_{(m,n)}( X; q,t) := \sum_{(\D, \varphi) \in \park{m,n}}q^{\area(\D, \varphi)}t^{\dinv(\D, \varphi)}Q_{\des(\omega^{-1})}(X),
\end{equation}
where $\omega = \cA(\D, \varphi)$. While these polynomials arise as the Frobenius character of certain representations \cite{hikita, Mellit-rational} and are therefore Schur-positive and hence symmetric in the $x$-variables \cite{GM}, remarkably, they are also $(q,t)$-symmetric. 
The relationship between the Hikita polynomials and the $(q,t)$-Catalan numbers is as follows:
\begin{equation}\label{eq:cat vs hikita}
C_{(m,n)}(q,t) = \langle \hik_{(m,n)}, h_{m}\rangle.
\end{equation}

Let us now denote by $\hik_{(m,n)}^{(r)}$ the truncation to $r$-variables of the Hikita polynomial:
\begin{equation}\label{eq:truncatedHik}
\hik_{(m,n)}^{(r)}(x_1\dots, x_r;q,t) := \hik_{(m,n)}(X;q,t)|_{x_{r+1} = x_{r+2} = \cdots = 0}.
\end{equation}
Then \eqref{eq:cat vs hikita} can equivalently be stated as:
\[
\hik_{(m,n)}^{(1)}(x_1;q,t) = x_{1}^{m}C_{(m,n)}(q,t).
\]

\subsection{Semistandard parking functions} We now semistandardize the definition of an $(m,n)$-parking function. As stated in the introduction, our motivation comes from a connection to the representation theory of quantized Gieseker varieties, cf. \cite{EKLS}. Recall that we have fixed coprime positive integers $m, n$. Let $r > 0$ be a positive integer, that we do not assume to be coprime with $m$ or $n$. 

\begin{definition}[\cite{EKLS}]\label{def:sspf}
A \emph{rank $r$ semistandard $(m,n)$-parking function} is a pair $(\D, \Upsilon)$ where:
\begin{enumerate}
\item $\D \in \dyck{m,n}$ is an $(m,n)$-Dyck path,  and
\item $\Upsilon$ is a function $\Upsilon: \ver(\D) \to \{1, \dots, r\}$,
\end{enumerate}
such that, if $v_i, v_{i+1}$ are two \emph{consecutive} vertical steps with $v_i$ above $v_{i+1}$, then $\Upsilon(v_i) \leq \Upsilon(v_{i+1})$. We will denote by $\sspf{r}{m,n}$ the set of all rank $r$ semistandard $(m,n)$-parking functions. 
\end{definition}

Define the \newword{rank r rational Catalan number} $C^{(r)}_{(m,n)}$ as the cardinality of $\sspf{r}{m,n}$. 
The theorem below was first proven in \cite{EKLS} using representation-theoretic techniques. We provide a new elementary proof.

\begin{theorem}\label{thm:coprime sspf}
Assume $m, n$ are coprime. Then, for any $r$:
\[
C^{(r)}_{(m,n)} = \frac{1}{n}\binom{nr+m-1}{m}.
\]
\end{theorem}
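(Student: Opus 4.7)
The plan is a cycle-lemma argument on sequences of multi-sets. First, reformulate: an $(m,n)$-Dyck path $\D$ is determined by its column-height sequence $(k_0, \ldots, k_{n-1})$, where $k_i$ is the number of south steps taken at $x$-coordinate $i$. The Dyck condition of staying weakly below the diagonal becomes $\sum_i k_i = m$ together with $n\sum_{j<i} k_j \geq mi$ for all $i = 1, \ldots, n$. A semistandard labeling $\Upsilon$ is equivalent to choosing, for each column $i$, a multi-set $M_i$ of size $k_i$ from $\{1, \ldots, r\}$, since the weakly increasing labels on a column are completely determined by their multi-set content. Thus $\sspf{r}{m,n}$ is in bijection with the set $\mathcal{D}$ of tuples $(M_0, \ldots, M_{n-1})$ of multi-sets in $\{1, \ldots, r\}$ satisfying $\sum_i |M_i| = m$ and $n \sum_{j<i} |M_j| \geq mi$ for $i = 1, \ldots, n$.

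Dropping the Dyck inequality, the total number of such multi-set tuples is
\[
\sum_{k_0 + \cdots + k_{n-1} = m} \prod_{i=0}^{n-1} \binom{k_i + r - 1}{k_i} \;=\; [x^m](1-x)^{-nr} \;=\; \binom{nr + m - 1}{m},
\]
equivalently, the number of multi-sets of size $m$ from an $nr$-element ground set. Let $\Z/n\Z$ act on these tuples by cyclic rotation $\sigma \cdot (M_0, \ldots, M_{n-1}) := (M_{n-1}, M_0, \ldots, M_{n-2})$. Every orbit has size exactly $n$: a nontrivial stabilizer would give the size sequence $(|M_i|)$ a period $d$ with $1 \leq d < n$ and $d \mid n$, and hence $(n/d) \mid m$ via the identity $m = (n/d)(|M_0|+\cdots+|M_{d-1}|)$, contradicting $\gcd(m,n)=1$. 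Within each orbit there is a unique representative lying in $\mathcal D$: setting $g(j) := n \sum_{i<j}|M_i| - mj$ and extending $g$ periodically via $g(j+n) = g(j)$, a direct calculation shows that rotation by $s$ sends $g$ to $g'(j) = g(j-s) - g(n-s)$, so the rotated tuple lies in $\mathcal D$ if and only if $g(n-s)$ equals the minimum of $g$ on $\{0, \ldots, n-1\}$. Coprimality forces $g$ to be injective on $\{0, \ldots, n-1\}$, since $g(i) = g(j)$ implies $n \mid m(i-j)$ hence $n \mid (i-j)$, so this minimum is attained at a unique index.

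Combining these facts yields $|\mathcal D| = \binom{nr+m-1}{m}/n$, proving the theorem. The only subtlety worth highlighting is the passage from the stabilizer of a multi-set tuple to the stabilizer of its size sequence, but this is automatic because the former is contained in the latter. Beyond this, everything reduces to classical cycle-lemma bookkeeping in the spirit of Dvoretzky--Motzkin.
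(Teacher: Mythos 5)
Your proof is correct and follows essentially the same approach as the paper: the paper identifies $\sspf{r}{m,n}$ with orbits of the cyclic shift action of $\Z/n\Z$ on the set $\Z^{n\times r}_{\geq 0}(m)$ of nonnegative integer $n\times r$-arrays summing to $m$, which is exactly your set of multi-set tuples $(M_0,\dots,M_{n-1})$ written via multiplicity vectors, and then invokes freeness plus the cycle lemma. The only difference is one of completeness: where the paper asserts that the action is free and that each orbit contains a unique semistandard parking function as ``straightforward to check,'' you supply the verifications — the period/coprimality argument for freeness (including the remark that the stabilizer of the multi-set tuple sits inside that of the size sequence), and the explicit $g$-function computation $g'(j)=g(j-s)-g(n-s)$ together with injectivity of $g$ on $\{0,\dots,n-1\}$ to pin down the unique orbit representative satisfying the Dyck inequality.
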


\begin{proof}
 First, note that $\binom{nr+m-1}{m}$ is the cardinality of the set:
\[
\Z^{n\times r}_{\geq 0}(m) := \left\{(a_{11}, \dots, a_{1r}|a_{21}, \dots, a_{2r}| \dots| a_{n1}, \dots, a_{nr}) \in \Z^{nr}_{\geq 0} \mid \sum_{i,j} a_{ij} = m\right\}.
\]
There is an action of the cyclic group $\Z/(n) = \langle \rho \mid \rho^{n} = 1 \rangle$ on $\Z^{n\times r}_{\geq0}(m)$ by a cyclic shift of the first subindex:
\[
\rho\cdot(a_{11}, \dots, a_{1r}|\dots| a_{n1}, \dots, a_{nr}) = (a_{21}, \dots, a_{2r}| \dots| a_{n1}, \dots, a_{nr}| a_{11}, \dots, a_{1r}).
\]
Since $m$ and $n$ are coprime this action is free, so there are precisely $\frac{1}{n}\binom{nr+m-1}{m}$ orbits. Thus, it suffices to identify these orbits with the set $\sspf{r}{m,n}$. For convenience, set $a_{i} := \sum_{j =1}^{r}a_{ij}$, with $i = 1, \dots, n$. 

Now, consider the set $P^{-}_{r}(m,n)$ of pairs $(\D, \Upsilon)$, where $\D$ is a lattice path (not necessarily a Dyck path) inside $R_{(m,n)}$ that ends on an east step, and $\Upsilon$ is a function from the vertical steps of $\D$ to $\{1, \dots, r\}$ that is semistandard in the sense of Definition \ref{def:sspf}. Note that $\sspf{r}{m,n} \subseteq P^{-}_{r}(m,n)$. 

We claim that there is a bijection $\Z_{\geq 0}^{n\times r}(m) \to P^{-}_{r}(m,n)$, sending the tuple $(a_{ij})$ to the pair $(\D, \Upsilon)$ where (see Figure \ref{fig:from tuple to sspf}):
\begin{itemize}
\item $\D$ is the path consisting of $a_1$ south steps, followed by an east step, followed by $a_2$ south steps, followed by an east step, $\dots$, followed by $a_n$ south steps, followed by an east step.
\item $\Upsilon$ is the function that assigns the value $1$ to the first $a_{11}$ south steps, $2$ to the next $a_{12}$ south steps, $\dots$, $r$ to the next $a_{1r}$ south steps, $1$ to the next $a_{21}$ south steps and so on.
\end{itemize}
Upon this identification, it is straightforward to check that each $\Z/(n)$-orbit on $P^{-}_{r}(m,n)$ contains a unique semistandard parking function, from which the result follows. 
\end{proof}

\begin{remark}
For a combinatorial formula for $C^{(r)}_{(m,n)}$ in the case when $m$ and $n$ are not coprime, see Section \ref{sec:noncoprime}. 
\end{remark}

\begin{figure}[ht]
\begin{tikzpicture}[scale=0.7]
 \pgfmathtruncatemacro{\m}{7}
  \pgfmathtruncatemacro{\n}{5}
 \pgfmathtruncatemacro{\mi}{\m-1} 
    \pgfmathtruncatemacro{\ni}{\n-1} 
    %
\draw[dotted] (0,\m)--(\n,0); 
  \foreach \x in {0,...,\n} 
    \foreach \y  [count=\yi] in {0,...,\mi}    
      \draw (\x,\y)--(\x,\yi);
        \foreach \y in {0,...,\m}     
       \foreach \x  [count=\xi] in {0,...,\ni}          
         \draw (\x,\y)--(\xi,\y);
\draw[line width = 1.5] (0,7)--(0,6)--(1,6)--(1,4)--(3,4)--(3,1)--(4,1)--(4,0)--(5,0);
\node[blue] at (0.3, 6.5) [scale=1.3]{2};
\node[red] at (1.3, 5.5) [scale=1.3]{1};
\node[blue] at (1.3, 4.5) [scale=1.3]{2};
\node[red] at (3.3, 3.5) [scale=1.3]{1};
\node[red] at (3.3, 2.5) [scale=1.3]{1};
\node[blue] at (3.3, 1.5) [scale=1.3]{2};
\node[red] at (4.3, 0.5) [scale=1.3]{1};l

\end{tikzpicture}
\qquad
\begin{tikzpicture}[scale=0.7]
 \pgfmathtruncatemacro{\m}{7}
  \pgfmathtruncatemacro{\n}{5}
 \pgfmathtruncatemacro{\mi}{\m-1} 
    \pgfmathtruncatemacro{\ni}{\n-1} 
    %
\draw[dotted] (0,\m)--(\n,0); 
  \foreach \x in {0,...,\n} 
    \foreach \y  [count=\yi] in {0,...,\mi}    
      \draw (\x,\y)--(\x,\yi);
        \foreach \y in {0,...,\m}     
       \foreach \x  [count=\xi] in {0,...,\ni}          
         \draw (\x,\y)--(\xi,\y);
\draw[line width = 1.5] (0,7)--(0,4)--(1,4)--(1,3)--(2,3)--(2,2)--(3,2)--(3,0)--(5,0);
\node[red] at (0.3, 6.5) [scale=1.3]{1};
\node[red] at (0.3, 5.5) [scale=1.3]{1};
\node[blue] at (0.3, 4.5) [scale=1.3]{2};
\node[red] at (1.3, 3.5) [scale=1.3]{1};
\node[blue] at (2.3, 2.5)[scale=1.3] {2};
\node[red] at (3.3, 1.5) [scale=1.3]{1};
\node[blue] at (3.3, 0.5) [scale=1.3]{2};l
\end{tikzpicture}

\caption{The element of $P^{-}_{2}(7,5)$ associated to the tuple
$(0,\blueone|\redone,\blueone|0,0|\redtwo,\blueone|\redone,0)
\in \Z^{5 \times 2}_{\geq 0}(7)$ (left) and
the unique semistandard parking function $(\D, \Upsilon)$ on its
$\Z/(5)$-orbit  corresponding to
$(\redtwo,\blueone|\redone,0|0,\blueone|\redone,\blueone|0,0)$
(right). }
\label{fig:from tuple to sspf}
\end{figure}

A few easy remarks are in order. First, we have a clear bijection $\dyck{m,n} \to \sspf{1}{m,n}$, that sends a Dyck path $\D$ to the pair $(\D, \Upsilon)$ where $\Upsilon$ is the constant function $1$. Second, $\park{m,n} \subseteq \sspf{m}{m,n}$ where this is a proper inclusion.  Indeed, we have that $\sspf{r}{m,n} \subseteq \sspf{r+1}{m,n}$ for all $r > 0$. 

Our goal now will be to produce a multiparametric deformation of the number $C^{(r)}_{(m,n)}$ that specializes to the usual rational $(q,t)$-Catalan number $\C_{(m,n)}(q,t)$ when $r = 1$. In order to do so, we need certain statistics on the set $\sspf{r}{m,n}$. As before, for $(\D, \Upsilon) \in \sspf{r}{m,n}$ we have:
\begin{equation}
\area(\D, \Upsilon) := \area(\D).
\end{equation}
The \emph{weight} of $(\D, \Upsilon)$ is the $r$-tuple:
\begin{equation}
\we(\D, \Upsilon) := (\we_1, \dots, \we_r) \in \Z_{\geq 0}^{r}, \qquad \we_i(D, \Upsilon) := |\Upsilon^{-1}(i)|.
\end{equation}
Since, $\sum_{i = 1}^{r} \we_i(\D, \Upsilon) = m$, then $\we(\D, \Upsilon)$ is always a (weak) composition of size $m$.  Henceforth, all compositions in this paper will be weak, i.e. some parts may equal zero. We will often use the notation $\bw \models m$ to denote such a composition, and write $\ell(\bw)$ for the number of parts, or \emph{length}, of $\bw$.

If $\bw \in \Z^{r}_{\geq 0}$ is an $r$-part composition of $m$, we denote by
\begin{equation} \label{eq:sspf-nested}
\sspf{\bw}{m,n} \subseteq \sspf{r}{m,n}
\end{equation}
the set of rank $r$ semistandard parking functions of weight $\bw$. For example, for $(1^m):=(1, 1, \dots, 1)$,
\begin{equation} \label{eq:park-nested-sspf}
\park{m,n} = \sspf{(1^m)}{m,n}.
\end{equation}

\subsection{Affine compositions}\label{sec:affcomp} Our next goal is generalize the dinv statistic in the framework of semistandard parking functions. To do so, we will build upon work of Gorsky, Mazin and the third author \cite{GMV} and employ the combinatorics of affine permutations, or in our context, more general objects which we call \emph{affine compositions}. For $ r \in \Z_{>0}$, denote by $[1,r]$ the set $\lbrace 1, \dots, r \rbrace$. 

\begin{definition}\label{def:affcomp}
Let $m, r > 0$. An $(m, r)$-\emph{affine composition} is a function $f: \Z \to \Z$ satisfying the following properties:
\begin{enumerate}
\item $f(x+m) = f(x) + r$ for every $x \in \Z$,
\item $f^{-1}[1, r]$ contains exactly $m$ elements, which are pairwise distinct mod $m$,
\item $\sum_{x \in f^{-1}[1, r]}x = 1 + \dots + m = \frac{m(m+1)}{2}$.
\end{enumerate}
We denote by $\affc{}(m, r)$ the set of $(m, r)$-affine compositions. 
\end{definition}

Notice, that due to Definition \ref{def:affcomp}(1) it follows that $f$ is completely determined by $f(1), \dots, f(m)$. Just as with affine permutations,  we will sometimes utilize window notation for $f$,
\[
f = [f(1), \dots, f(m)]_{r}.
\]
Since the subscript $r$ above is only there to specify the period of $f$ in Definition \ref{def:affcomp}(1), we will often omit it when deemed clear. 

The \emph{weight} of an affine composition $f$ is the $r$-tuple,
\begin{equation}
\we(f) := (|f^{-1}(1)|, \dots, |f^{-1}(r)|).
\end{equation}
Again, by Definition \ref{def:affcomp}(2),  $\we(f)$ is an $r$-part composition of $m$.  We will define by $\affc{\bw}(m,r)$ the set of affine compositions of weight $\bw$. 

Note that when $r=m$ and $\bw = (1^m)$, we recover the affine symmetric group $\affsym{m}{}$. That is, 
\[
\affsym{m}{} = \affc{(1^m)}(m,m). 
\]
Finally, we say that an affine composition $f$ is \newword{n-stable} if $f(x+n) \geq f(x)$ for every $x \in \Z$. We denote by $\affc{\bw}^n(m,r)$ the set of $n$-stable affine compositions of weight $\bw$.

\begin{theorem}\label{thm:bijection} Let $\bw$ be any $r$-part composition of $m$,  with $m,n,r \in \Z_{>0}$ and $m,n$-coprime. 
There exists a weight-preserving bijection
\[
\cA_{\bw}: \sspf{\bw}{m,n} \to \affc{\bw}^n(m,r), 
\]
such that when $r = m$ and $\bw = (1^m)$, the bijection $\cA_{\bw}: \park{m,n} \to \affsym{m}{n}$ coincides with (the inverse of) that defined in \cite[Section 3.1]{GMV}. 
\end{theorem}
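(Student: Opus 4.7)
The plan is to extend the Gorsky--Mazin--Vazirani bijection $\park{m,n} \to \AS_m^n$ from \cite{GMV} to the semistandard setting. The crucial observation is that affine compositions relax the bijectivity of an affine permutation's window in exactly the way $\Upsilon$ relaxes the bijectivity of a parking function's $\varphi$; once this parallel is set up, the rest of the bijection follows from the parking function case.

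First I would assign, to each Dyck path $\D \in \dyck{m,n}$, a canonical \emph{window} $W(\D) \subset \Z$ of size $m$ whose elements are indexed by the vertical steps of $\D$. The elements of $W(\D)$ are a uniform shift (by a constant depending only on $m, n$) of the Anderson labels \eqref{eq:anderson} of the vertical steps of $\D$; by Remark \ref{rmk:left diagonal} and coprimality, these labels are pairwise distinct mod $m$, and a short combinatorial identity shows that the shift can be chosen to force $\sum W(\D) = \tfrac{m(m+1)}{2}$ independently of $\D$. Given $(\D, \Upsilon) \in \sspf{\bw}{m,n}$ and writing $a_v \in W(\D)$ for the affine position of the vertical step $v$, I define $f := f_{(\D, \Upsilon)} = \cA_\bw(\D, \Upsilon)$ by $f(a_v) := \Upsilon(v)$ for each $v \in \ver(\D)$, extended periodically by $f(x+m) = f(x) + r$. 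Conditions (1)--(3) of Definition \ref{def:affcomp} then hold by construction, and the weight is $\we(f) = (|\Upsilon^{-1}(j)|)_{j=1}^{r} = \bw$.

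For $n$-stability, the key local observation is that whenever $v, v'$ are consecutive vertical steps of $\D$ with $v$ directly above $v'$, their affine positions satisfy $a_{v'} = a_v + n$, so the semistandard inequality $\Upsilon(v) \leq \Upsilon(v')$ is exactly the condition $f(a_v + n) \geq f(a_v)$. When an east step of $\D$ intervenes between the relevant positions, the weak-below-the-diagonal condition on $\D$ delivers $n$-stability by the argument of \cite{GMV}; conversely, $n$-stability of $f$ forces the reconstructed lattice path to stay weakly below the diagonal. For the inverse, given $f \in \affc{\bw}^n(m,r)$, the window $W := f^{-1}[1, r]$ determines a Dyck path $\D_f$ by inverting the map $\D \mapsto W(\D)$, and $\Upsilon_f(v) := f(a_v)$ recovers the labeling; the semistandard condition on $\Upsilon_f$ follows from $n$-stability.

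Finally, when $r = m$ and $\bw = (1^m)$, each $f$-fiber on $W$ has cardinality one, so $f$ restricted to its window is a bijection to $[1, m]$; that is, $f$ is literally an affine permutation, and the construction matches \cite[Section 3.1]{GMV} by direct comparison. The main obstacle is the normalization in the first step: verifying that a uniform shift of the Anderson labels produces a window of sum $\tfrac{m(m+1)}{2}$ simultaneously for every Dyck path. Once this identity is in hand, the remaining checks---weight-preservation, $n$-stability, and inverse-map well-definedness---reduce to semistandard analogues of the arguments in \cite{GMV}.
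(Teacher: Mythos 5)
Your overall plan is the same as the paper's: send $(\D,\Upsilon)$ to the function determined by $\tilde f(\gamma(v))=\Upsilon(v)$ on the vertical steps, extend $m$-periodically with jump $r$, and then translate the window to normalize the sum to $\binom{m+1}{2}$. The treatment of $n$-stability (the consecutive-step case giving exactly semistandardness, the east-step case being automatic) and the description of the inverse map via the window $f^{-1}[1,r]$ also match the paper's argument. So the route is correct in outline.

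There is, however, a genuine error in what you flag as the main technical point. You assert that the normalizing shift is ``a constant depending only on $m,n$'' and that ``a short combinatorial identity shows that the shift can be chosen to force $\sum W(\D)=\frac{m(m+1)}{2}$ independently of $\D$.'' This is false. Writing the vertical steps as $v_j=(a_j,m-j)$, one has
\[
\sum_{j=0}^{m-1}\gamma(v_j)=m^2n-m\sum_j a_j-n\binom{m+1}{2},
\]
so the required shift is $c=(n+1)\tfrac{m+1}{2}-mn+\sum_j a_j$, which manifestly depends on $\D$ through $\sum_j a_j$. (Concretely, for $m=2$, $n=3$ the two Dyck paths have Anderson label sets $\{0,3\}$ and $\{0,1\}$, requiring shifts $0$ and $1$ respectively.) The paper handles this correctly by allowing the integer $k$ in $f(x):=\tilde f(x+k)$ to depend on $\D$; all that matters is that $k$ is an integer, which follows since $\sum_j\gamma(v_j)-\binom{m+1}{2}$ is divisible by $m$. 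Your construction works verbatim once the shift is allowed to depend on $\D$, but the ``combinatorial identity'' you hoped to prove does not hold. A smaller point: pairwise distinctness of the $\gamma(v_j)$ mod $m$ follows from $\gamma(a,b)\equiv -nb\pmod m$ together with $\gcd(m,n)=1$, not from Remark~\ref{rmk:left diagonal}, which concerns the sign of $\gamma$.
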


\begin{proof}
We will construct the map $\cA_{\bw}$ explicitly. Recall from \eqref{eq:anderson} the Anderson function $\gamma(a,b) = mn - ma - nb $, characterized by $\gamma(0,m) = 0$, $\gamma(a+1, b) = \gamma(a,b) - m$ and $\gamma(a,b+1) = \gamma(a,b) - n$. 
Consider the horizontal strip of height $m$, $\strip_{m} := \{(a,b) \in \Z^2 \mid a \in \Z, 1 \leq b \leq m\}$ (see Figure \ref{fig:strip}). By coprimality of $m$ and $n$, the restriction $\gamma: \strip_{m} \to \Z$ is a bijection. In particular, $R_{(m,n)} \subseteq \strip_{m}$, thus the values of $\gamma$ restricted to $R_{(m,n)}$ are all distinct.  

So suppose $(\D, \Upsilon) \in \sspf{r}{m,n}$. Labeling each $v_i \in \ver(\D)$ by its topmost coordinate, we have that $v_0 = (a_0, m), v_1 = (a_1, m-1), \dots, v_{m-1} = (a_{m-1}, 1)$, for some integers $a_i$ with $a_0 = 0$. Define the function $\tilde{f}: \Z \to \Z$ by setting
\begin{equation}\label{eqn:deftildef}
\tilde{f}(\gamma(v_j)) := \Upsilon(v_j), \text{ for each } j = 0, \dots, m-1
\end{equation}
and extending linearly so that $\tilde{f}$ satisfies (1) of Definition \ref{def:affcomp}. That is, for any $x \not\in \{\gamma(v_j)\}_{0 \leq j \leq m-1}$, we can write $x = \gamma(v_j)+ pm$ for some $p \in \Z$ and $j \in [0,m-1]$ so that $\tilde{f}(x) := \tilde{f}(\gamma(v_j)) + pr$. 

To see that the function $\tilde{f}$ is well-defined, note that we may extend the function $\Upsilon: \ver(\D) \to \{1, \dots, r\}$ to a function $\Upsilon: \strip_{m} \to \Z$ by setting
\[
\Upsilon(a_{j} + k, m-j) := \Upsilon(a_{j}, m-j) - kr, \text{ for all } j = 0, \dots, m-1, \text{ and } k \in \Z
\]
so that \eqref{eqn:deftildef} is valid for every $i \in \Z$. Since $\Upsilon$ is single-valued and $\gamma: \strip_{m} \to \Z$ is bijective, it follows that $\tilde{f}$ is well-defined. 

We claim that $\tilde{f}$ satisfies (1) and (2), but possibly not (3), of Definition \ref{def:affcomp}. For (1), if $\tilde{f}(x) = i$ then, assuming that $x$ corresponds to the box $(a,b)$, $\tilde{f}(x+m) =\Upsilon(a-1,b) = f(x) + r$.  For (2), by definition $\tilde{f}^{-1}([1,r]) = \{\gamma(v_0), \dots, \gamma(v_{m-1})\}$. Since the $y$-coordinates of $v_j =( a_{m-j},j)$ run from $1$ to $m$ and $m, n$ are coprime, it follows that $\gamma(v_0), \dots, \gamma(v_{m-1})$ are all distinct mod $m$. 

Now, $\sum_{i= 0}^{m-1}\gamma(v_{i}) - (i+1) = km$ for some integer $k$. Hence, we can define the function:
\begin{equation}\label{eq:def A-map}
\cA_{\bw}(\D, \Upsilon) := f, \; \text{where} \; f(x) := \tilde{f}(x+k).
\end{equation}
In particular, $f$ is an $(m,r)$-affine composition. By construction, it is clear that $\we(\D, \Upsilon) = \we(f)$. Finally, the fact that $f$ is $n$-stable follows easily from the semi-standardness of $(\D, \Upsilon)$. 

To see the map is a bijection we explicitly construct the inverse map $\cA_{\bw}^{-1}$. Suppose $f \in \affc{}(m, r)$ and consider the set $\Delta_{f} := \{d \in \Z \mid f(d) \geq 0\}$. Then $\Delta_{f} $ is bounded below and hence has a minimal element, say $M_{f}$. Let $\gamma_{f}: \strip_{m} \to \Z$ be the shifted Anderson function given by $\gamma_{f}(a,b) = \gamma(a,b)+ M_{f}$. We claim that the set of all boxes $B_{f} := \{(a,b) \in R_{(m,n)} \mid \gamma_{f}(a,b) \in \Delta_{f}\}$ form the coarea boxes of a Dyck path. 
From the properties of $\gamma$ mentioned above and Remark \ref{rmk:left diagonal},  it is obvious that $\gamma_f$ is bijective on $\strip_{m}$ and that $\gamma_{f}(a,b) \geq M_{f}$ if and only if the box $(a,b)$ is to the left of the diagonal $mx+ny-mn = 0$. Thus, by definition all the boxes in $B_{f} $ lie to the left of the diagonal. Now, if $(a,b) \in B_{f}$ is such that $(a-1, b) \in R_{(m,n)}$, then $f(\gamma_{f}(a-1,b)) = f(\gamma_f(a,b) + m) = f(\ell(a,b)) + r > 0$. And if $(a,b) \in B_{f}$ is such that $(a, b-1) \in R_{(m,n)}$ then $f(\gamma_f(a, b-1)) = f(\gamma_f(a,b) + n) \geq f(\gamma_f(a,b)) \geq 0$. Thus, the boxes in $B_{f}$ are precisely the coarea boxes of a Dyck path $D_f$. 

Lastly, the function $\Upsilon$ can be recovered from $f$ by setting $\Upsilon(a,b) = f(\gamma_f(a,b))$. That $(\D,\Upsilon)$ is semistandard follows easily from the $n$-stability of $f$. From here, it is easy to see that $\cA_{\bw}(\D,\Upsilon)$ recovers $f$. 

In the special case when $\bw = (1^m)$, the equivalence between $\cA_{\bw}^{-1}$ and the bijection in \cite[Section 3.1]{GMV} follows easily from direct comparison. 
\end{proof}

A few important remarks are in order. 

\begin{remark} 
First, for easy referencing, we provide an explicit algorithm for computing the bijection $\cA_{\bw}: \sspf{\bw}{m,n} \to \affc{\bw}^n(m,r)$ in Theorem \ref{thm:bijection}.

Recall the map $\gamma(x,y) = mn-mx-ny$ from \eqref{eq:anderson}. Then, given any $(\D, \Upsilon) \in \sspf{\bw}{m,n}$, the the affine composition ${f}_{(\D, \Upsilon)} := \cA_{\bw}(\D, \Upsilon)$ can be calculated as follows: 
\begin{itemize}
\item[\bf{1:}] 
Set $\tilde{f}(\gamma(v)) = \Upsilon(v)$ for every vertical step $v \in \ver(\D)$,
so that  $\tilde{f}^{-1}(i) = \gamma(\Upsilon^{-1}(i))$ for  $i \in [r]$.

\item[\bf{2:}]  For any $x \not\in \{\gamma(v) \mid v \in  \ver(\D)\}$, write $x = \gamma(v)+ pm$ for some $p \in \Z$ and $v \in \ver(\D)$. Set $\tilde{f}(x) := \tilde{f}(\gamma(v)) + pr$. 

\item[\bf{3:}]  Let $k= \frac{n(m-1)-(m+1)}{2} - \sum_{j=0}^{m-1} a_j$, where $a_{0}, \dots, a_{m-1}$ are the $x$-coordinates of the vertical steps of $\D$. Then $f_{(\D, \Upsilon)}(x) := \tilde{f}(x+k)$ for all $x \in \Z$. 
\end{itemize}

For a detailed example implementing the above algorithm to 
compute  $\A_{\bw}$, see \cite[Example 7]{GSV-fpsac}.
\end{remark}

\begin{remark}\label{rmk:notation}
Second, when $\bw = (1^m)$, we will omit the subscript $\bw$ in $\cA_{\bw}$ and simply denote by $\cA$ the map from $\park{m,n} \to \affsym{m}{n}$. 
\end{remark}

\begin{remark}\label{rmk:recovering pi}
Thirdly, let $(\D, \Upsilon) \in \sspf{\bw}{m,n}$, and $f:=\cA_{\bw}(\D, \Upsilon)$. Note that we can recover the Dyck path $\D$ from the set $f^{-1}[1,r]$ as follows.  The boxes to the left of the vertical steps of $\D$ are precisely $\gamma^{-1}\tilde{f}^{-1}[1,r]$ where, up to a shift, $\tilde{f}$ coincides with $f$, see \eqref{eq:def A-map}. The shift from $f$ to $\tilde{f}$ is done so that the minimal element of $\tilde{f}^{-1}[1,r]$ is $0$. In other words, $\tilde{f}^{-1}[1,r] := \{x - x_{0} \mid x \in f^{-1}[1,r], x_0 := \min f^{-1}[1,r]\}$, and we see that $\D$ is completely determined by the set $f^{-1}[1,r]$. 
\end{remark}

\subsection{Standardization}\label{sec:standardization} Note that we have a right action of $\affsym{m}{}$ on $\affc{}(m, r)$ by precomposition. It is clear that two affine compositions $f, g \in \affc{}(m, r)$ are in the same $\affsym{m}{}$-orbit if and only if $\we(f) = \we(g)$. For each weight $\bw = (w_1, \dots, w_r)$, we consider a distinguished affine composition:
\begin{equation}
f_{\bw} := [1, \dots, 1, 2, \dots, 2, \dots, r, \dots, r]_{r}
\end{equation}
where each letter $i$ repeats $w_{i}$-times. In other words, $f_{\bw}^{-1}(i) = [w_{1} + \cdots + w_{i-1}+1, w_{1} + \cdots + w_{i-1} + w_{i}]$. The stabilizer of $f$ is clearly the parabolic subgroup $S_{\bw} := S_{w_{1}} \times \cdots \times S_{w_{r}}$. Thus, the map
\begin{equation}
\action_{\bw}: \minlength{\bw}{m}{} \to \affc{\bw}(m, r), \qquad \sigma \mapsto f_{\bw}\circ \sigma
\end{equation}
is a bijection. We will abuse the notation and denote by $S_{\bw}\backslash\affsym{m}{}$ a set of minimal-length right coset representatives. In other words, $\sigma \in \minlength{\bw}{m}{}$ if and only if (see e.g. \cite{papi})
\[
\sigma^{-1}(w_1 + \cdots + w_{i-1}+1) < \cdots < \sigma^{-1}(w_1 + \cdots + w_{i-1}+w_i) \; \text{for every} \; i \in \{1, \dots, r\}.
\]

\begin{figure}[ht]
\begin{tikzpicture}[scale=0.7]
 \pgfmathtruncatemacro{\m}{5} 
  \pgfmathtruncatemacro{\n}{7} 
 \pgfmathtruncatemacro{\mi}{\m-1} 
    \pgfmathtruncatemacro{\ni}{\n-1} 
\draw[dotted] (0,\m)--(\n,0); 
  \foreach \x in {0,...,\n} 
    \foreach \y  [count=\yi] in {0,...,\mi}   
      \draw (\x,\y)--(\x,\yi);
        \foreach \y in {0,...,\m}     
       \foreach \x  [count=\xi] in {0,...,\ni}   
         \draw (\x,\y)--(\xi,\y);
  \foreach \x in {0,...,\n} 
  {  \foreach \y in {1,...,\m}     
       {\pgfmathtruncatemacro{\label}{ - \m * \x - \n *  \y +\m*\n} 
      \pgfmathscalar{\label}
       \ifthenelse{ \label >0  \OR \label =0 }{ \node  at (\x-.25,\y-.25) [black!60!white, scale=.7]{\label};}{}} }
 \draw[line width = 1.5](0,5)--(0,3)--(2,3)--(2,1)--(4,1)--(4,0)--(7,0);
 \node at (0.3, 4.5) [scale=1.3]{\color{blue} 2};
\node at (0.3, 3.5) [scale=1.3]{\color{blue} 2};
\node at (2.3, 2.5) [scale=1.3]{\color{blue} 1};
\node at (2.3, 1.5) [scale=1.3]{\color{blue} 3};
\node at (4.3, 0.5) [scale=1.3]{\color{blue} 3};
\end{tikzpicture}
\qquad \qquad
\begin{tikzpicture}[scale=0.7]
\draw(9,0)--(16,0)--(16,5)--(9,5)--cycle;
\draw(9,1)--(16,1);
\draw(9,2)--(16,2);
\draw(9,3)--(16,3);
\draw(9,4)--(16,4);
\draw(10,0)--(10,5);
\draw(11,0)--(11,5);
\draw(12,0)--(12,5);
\draw(13,0)--(13,5);
\draw(14,0)--(14,5);
\draw(15,0)--(15,5);
\draw[dotted](16,0)--(9,5);
\draw[line width = 1.5](9,5)--(9,3)--(11,3)--(11,1)--(13,1)--(13,0)--(16,0);

\node at (9.3, 4.5) [scale=1.3]{\color{red} 2};
\node at (9.3, 3.5) [scale=1.3]{\color{red} 3};
\node at (11.3, 2.5) [scale=1.3]{\color{red} 1};
\node at (11.3, 1.5) [scale=1.3]{\color{red} 5};
\node at (13.3, 0.5) [scale=1.3]{\color{red} 4};
\end{tikzpicture}
\caption{A semistandard parking function (left) and its standardization (right). 
On the left, the values of $\gamma$ are denoted by small grey numbers, with the values of $\Upsilon$ denoted by large {\color{blue}blue} integers to the right of each vertical step. The values of the standardization are denoted in {\color{red}red} labels (right). In particular, the last two vertical steps with label $3$ (left) and Anderson labels $11$ and $8$ correspond, under standardization, to the vertical steps labeled by $4$ and $5$ (right). This is inline with Lemma \ref{lem:standardization} where Anderson labels are used to break ties. }
\label{fig:standardization}
\end{figure}
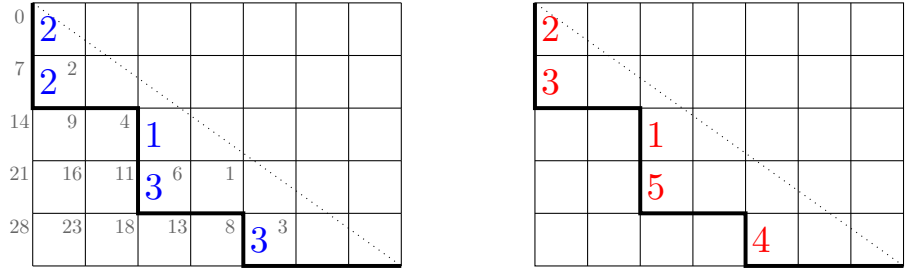

\begin{remark}\label{rmk:increasing}
Note that $\minlength{\bw}{m}{}$ can also be described as those affine permutations $\sigma \in \affsym{m}{}$ for which $\sigma^{-1}$ is increasing on $f_{\bw}^{-1}(y)$ for every $y \in \Z$. 
\end{remark}

Let us recall that if $f: \Z \to \Z$ is a function, then an \emph{inversion} of $f$ is a pair $(i,j) \in \Z \times \Z$ such that $i < j$ and $f(j) < f(i)$. We will denote by $\Inv(f)$ the set of inversions of the function $f$. 

\begin{lemma}\label{lem:inversions}
For every $\sigma \in \minlength{\bw}{m}{}$,
\[
\Inv(\sigma) = \Inv(f_{\bw}\circ\sigma).
\]
\end{lemma}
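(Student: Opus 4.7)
The plan is to prove the two inclusions $\Inv(f_{\bw}\circ\sigma)\subseteq\Inv(\sigma)$ and $\Inv(\sigma)\subseteq\Inv(f_{\bw}\circ\sigma)$ directly from the definitions, exploiting two key properties of $f_{\bw}$: that it is \emph{weakly} increasing on $\Z$ (this follows from $f_{\bw}=[1,\dots,1,2,\dots,2,\dots,r,\dots,r]_r$ together with the periodicity $f_{\bw}(x+m)=f_{\bw}(x)+r$), and that it is constant precisely on the level sets $f_{\bw}^{-1}(y)$ for $y\in\Z$. The characterization of $\minlength{\bw}{m}{}$ recorded in Remark~\ref{rmk:increasing} — namely that $\sigma^{-1}$ is increasing on each level set $f_{\bw}^{-1}(y)$ — is precisely what will rule out a potential ambiguity in the second inclusion.

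For the inclusion $\Inv(f_{\bw}\circ\sigma)\subseteq\Inv(\sigma)$, I would simply observe that if $i<j$ and $f_{\bw}(\sigma(j))<f_{\bw}(\sigma(i))$, then since $f_{\bw}$ is weakly increasing one must have $\sigma(j)<\sigma(i)$, i.e.\ $(i,j)\in\Inv(\sigma)$. No use of the coset-representative hypothesis is needed here.

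For the reverse inclusion, suppose $(i,j)\in\Inv(\sigma)$, so $i<j$ and $\sigma(j)<\sigma(i)$. Weak monotonicity of $f_{\bw}$ gives $f_{\bw}(\sigma(j))\leq f_{\bw}(\sigma(i))$; I need to promote this to a strict inequality. Assume for contradiction that $f_{\bw}(\sigma(j))=f_{\bw}(\sigma(i))=:y$. Then $\sigma(i),\sigma(j)\in f_{\bw}^{-1}(y)$. By Remark~\ref{rmk:increasing}, $\sigma^{-1}$ is strictly increasing on $f_{\bw}^{-1}(y)$, so $\sigma(j)<\sigma(i)$ forces $j=\sigma^{-1}(\sigma(j))<\sigma^{-1}(\sigma(i))=i$, contradicting $i<j$. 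Hence $f_{\bw}(\sigma(j))<f_{\bw}(\sigma(i))$ and $(i,j)\in\Inv(f_{\bw}\circ\sigma)$.

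There is no real obstacle here — the lemma is essentially a bookkeeping statement about how minimal-length coset representatives interact with the standardization-like collapse $f_{\bw}$. The only subtle point is recognizing that the level sets $f_{\bw}^{-1}(y)$ should be taken for all $y\in\Z$ (not just $y\in[1,r]$) so that the argument is valid across windows, but this is built into the periodicity of $f_{\bw}$ and of $\sigma$, and Remark~\ref{rmk:increasing} is already formulated in this periodic setting.
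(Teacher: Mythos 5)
Your proof is correct and matches the paper's argument essentially verbatim: both inclusions are handled by the weak monotonicity of $f_{\bw}$, and the strictness in the harder direction comes from Remark~\ref{rmk:increasing}, exactly as in the paper. Nothing to add.
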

\begin{proof}
Let $(i,j) \in \Inv(\sigma)$, so $i < j$ but $\sigma(j) < \sigma(i)$. Since $f_{\bw}$ is non-decreasing, we have that $f_{\bw}\circ \sigma(j) \leq f_{\bw}\circ\sigma(i)$. We need to show that this inequality is strict. If not, then we have $\sigma(j) < \sigma(i)$ and both $\sigma(i), \sigma(j) \in f_{\bw}^{-1}(y)$ for some $y \in \Z$. However, by Remark \ref{rmk:increasing} this contradicts the fact that $\sigma^{-1}$ is increasing on $f^{-1}(y)$. Thus, $(i,j) \in \Inv(\sigma)$. The other inclusion follows from the non-decreasingness of $f_{\bw}$. 
\end{proof}

Let us now denote by $\minlength{\bw}{m}{n}$ those minimal length coset representatives $\sigma \in \minlength{\bw}{m}{}$ that do not have inversions of height $n$, that is, such that $f(x) \leq f(x+n)$ for every $x \in \Z$.  Hence, we obtain the following Corollary to Lemma \ref{lem:inversions}. 

\begin{corollary}\label{prop:heightn}
For every $n > 0$, the map $\action_{\bw}$ induces a bijection:
\[
\action_{\bw}:\minlength{\bw}{m}{n} \to \affc{\bw}^n(m, r).
\]
\end{corollary}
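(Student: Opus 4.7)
The plan is to observe that this is essentially immediate from Lemma \ref{lem:inversions}. Since we already know from the discussion preceding the corollary that $\action_{\bw}: \minlength{\bw}{m}{} \to \affc{\bw}(m,r)$ is a bijection, it suffices to show that $\action_{\bw}$ and its inverse preserve the $n$-stability condition. In other words, I would verify the equivalence
\[
\sigma \in \minlength{\bw}{m}{n} \iff f_{\bw}\circ \sigma \in \affc{\bw}^n(m, r).
\]

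First I would unpack each of the two conditions in terms of inversions. By definition, $\sigma \in \minlength{\bw}{m}{n}$ means $\sigma$ has no inversion of height $n$, i.e.\ there is no $x \in \Z$ with $(x, x+n) \in \Inv(\sigma)$. Similarly, $f \in \affc{\bw}^n(m,r)$ is defined by $f(x) \leq f(x+n)$ for every $x \in \Z$, which is equivalent to saying $\Inv(f)$ contains no pair of the form $(x, x+n)$.

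Next I would invoke Lemma \ref{lem:inversions}, which gives the equality $\Inv(\sigma) = \Inv(f_{\bw}\circ\sigma)$. This immediately implies that $\sigma$ has an inversion of height $n$ if and only if $f_{\bw}\circ\sigma$ does, and hence the two conditions above are equivalent. Combined with the fact that $\action_{\bw}$ is already a weight-preserving bijection, this shows that $\action_{\bw}$ restricts to the desired bijection $\minlength{\bw}{m}{n} \to \affc{\bw}^n(m,r)$.

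There is no real obstacle here: the genuine content sits in Lemma \ref{lem:inversions}, which already handled the nontrivial direction (namely, that $f_{\bw}$ being weakly increasing does not create or destroy inversions of $\sigma$ when precomposed with a minimal-length coset representative). The corollary is then purely a matter of specializing that identity of inversion sets to pairs of height exactly $n$.
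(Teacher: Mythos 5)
Your proof is correct and follows exactly the route the paper intends: the paper simply states the corollary as an immediate consequence of Lemma \ref{lem:inversions}, and your write-up fills in precisely the details implicit in that statement (restricting the already-established bijection $\action_{\bw}$ using the equality of inversion sets, specialized to pairs of height $n$). Nothing is missing and the argument matches the paper's intent.
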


This allows us to define the \emph{standardization} of a semistandard parking function $(\D, \Upsilon)$. Fixing a weight $\bw$, we consider the following composition,
\[
\sspf{\bw}{m,n}\buildrel \cA_{\bw} \over \longrightarrow \affc{\bw}^n(m,r) \buildrel \action_{\bw}^{-1}\over\longrightarrow \minlength{\bw}{m}{n}.
\]

Since every element in $\minlength{\bw}{m}{n}$ is, in particular, an affine permutation without $n$-inversions, we can apply the map $\cA^{-1}: \affsym{m}{n} \to \park{m,n}$ (see Remark \ref{rmk:notation}). Thus, we may define the following. 

\begin{definition}
Let $(\D, \Upsilon)$ be a rank $r$ semistandard $(m,n)$-parking function of weight $\bw$. The \newword{standardization} of $(\D, \Upsilon)$ is the $(m,n)$-parking function,
\begin{equation}
\std(\D, \Upsilon) := \cA^{-1}\action_{\bw}^{-1}\cA_{\bw}(D, \Upsilon) \in \park{m,n}.
\end{equation}
\end{definition}

\begin{remark} \label{rem:PFstdtoPF}
Note that when $\bw = (1, \dots, 1)$, then both $f_{\bw}$ and $\action_{\bw}$ are the identity function, thus the standardization of a parking function is itself.
\end{remark}

We would like to obtain a way to go from a semistandard parking function $(\D, \Upsilon)$ to its standardization $(\D', \varphi)$ without having to pass by the several bijections appearing in the definition.  Lemma \ref{lem:standardization} below achieves that. We begin by verifying that the underlying Dyck path of a semistandard parking function coincides with that of its standardization. 

\begin{proposition}\label{prop:same dyck}
Let $(\D, \Upsilon) \in \sspf{\bw}{m,n}$ and $(\D', \varphi) = \std(\D, \Upsilon)$ be its standardization. Then, $\D = \D'$. That is, a semistandard parking function and its standardization have the same underlying Dyck path. 
\end{proposition}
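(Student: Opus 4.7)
The strategy is to use the characterization of the underlying Dyck path purely in terms of the preimage data $f^{-1}[1,r]$ of the associated affine composition, as recorded in Remark \ref{rmk:recovering pi}. The standardization is a three-step composition, but only the first (application of $\cA_{\bw}$) and the last (application of $\cA^{-1}$) see the Dyck path information directly; the middle map $\action_{\bw}^{-1}$ is merely precomposition with $f_{\bw}$. So the plan is to track $f^{-1}[1,r]$ through each of these maps and show it is preserved.

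First, set $f := \cA_{\bw}(\D,\Upsilon) \in \affc{\bw}^n(m,r)$. By Remark \ref{rmk:recovering pi}, the Dyck path $\D$ is reconstructed entirely from the set $f^{-1}[1,r]$ (via the shifted Anderson labelling, with shift determined by the minimum element of this set). Next, let $\sigma := \action_{\bw}^{-1}(f) \in \minlength{\bw}{m}{n} \subseteq \affsym{m}{n}$, so by definition $f = f_{\bw}\circ \sigma$. Finally, $\std(\D, \Upsilon) = \cA^{-1}(\sigma) = (\D', \varphi)$, and applying Remark \ref{rmk:recovering pi} in the special case $\bw=(1^m)$ (where $f_{(1^m)}$ is the identity map and the affine composition equals $\sigma$), we see that $\D'$ is reconstructed from the set $\sigma^{-1}[1,m]$.

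The key computation is then: by the explicit form of $f_{\bw} = [1,\dots,1,2,\dots,2,\dots,r,\dots,r]_r$, which has $f_{\bw}(x) \in [1,r]$ precisely when $x \in [1,m]$, we have $f_{\bw}^{-1}[1,r] = \{1,2,\dots,m\}$. Consequently,
\[
f^{-1}[1,r] = \sigma^{-1}\bigl(f_{\bw}^{-1}[1,r]\bigr) = \sigma^{-1}[1,m].
\]
So the two sets defining $\D$ and $\D'$ respectively are literally equal, and the reconstruction procedure of Remark \ref{rmk:recovering pi} yields $\D = \D'$.

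There is no real obstacle; the whole argument reduces to the observation that $f_{\bw}$ is a monotone surjection $[1,m] \to [1,r]$ extended $(m,r)$-periodically, so the preimage of $[1,r]$ is precisely $[1,m]$. The only thing to be careful about is to ensure that the auxiliary normalization shifts used in the two applications of the reconstruction recipe (one for $\cA_{\bw}$ at rank $r$, one for $\cA$ at rank $m$) agree. But these shifts are determined purely by $\min f^{-1}[1,r]$ and $\min \sigma^{-1}[1,m]$, which are equal by the identity above, so the shifted Anderson labellings $\gamma_f$ and $\gamma_\sigma$ coincide and produce the same set of coarea boxes and hence the same Dyck path.
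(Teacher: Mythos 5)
Your proof is correct and takes essentially the same route as the paper: reduce via Remark~\ref{rmk:recovering pi} to the identity $f^{-1}[1,r] = \sigma^{-1}f_{\bw}^{-1}[1,r] = \sigma^{-1}[1,m]$, using that $f_{\bw}^{-1}[1,r] = [1,m]$. Your extra remark checking that the normalization shifts in the two applications of the reconstruction agree (since both are determined by the minimum of the same set) is a useful bit of care that the paper leaves implicit.
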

\begin{proof}
Recall from Remark \ref{rmk:recovering pi} that if $(\D, \Upsilon)$ is a semistandard parking function then the underlying Dyck path is recovered directly from the data of $f^{-1}[1, r]$, where $f = \cA_{\bw}(D, \Upsilon)$. Thus, we need to show that for every $\sigma \in \minlength{\bw}{m}{n}$,
\[
\sigma^{-1}[1, m] = (f_{\bw}\circ\sigma)^{-1}[1, r] = \sigma^{-1}f_{\bw}^{-1}[1,r].
\]
By definition, $f_{\bw}^{-1}[1,r] = [1,m]$, so the result follows. 
\end{proof}

In fact, the following is an easy, pictorial way to directly obtain the standardization of a semistandard parking function $(\D, \Upsilon)$ via the Anderson labels given by $\gamma: \Z^2 \to \Z$ from \eqref{eq:anderson}.  See Figure \ref{fig:standardization} for an illustration of this procedure. 

\begin{lemma}\label{lem:standardization}
Let $(\D, \Upsilon)$ be a semistandard parking function. Then, $\std(\D, \Upsilon) = (\D, \varphi)$, where $\varphi: \ver(\D) \to \{1, \dots, m\}$ is the unique bijection satisfying:
\begin{enumerate}
\item $\varphi(v_i) < \varphi(v_j)$ if $\Upsilon(v_i) < \Upsilon(v_j)$, and 
\item $\varphi(v_i) < \varphi(v_j)$ whenever $\Upsilon(v_i) = \Upsilon(v_j)$ and $\gamma(v_i) < \gamma(v_j)$. 
\end{enumerate}
\end{lemma}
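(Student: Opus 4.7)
The plan is to unfold the definition $\std(\D, \Upsilon) = \cA^{-1}\action_{\bw}^{-1}\cA_{\bw}(\D, \Upsilon)$ and track how the labels transform through each map. Proposition \ref{prop:same dyck} already tells us that the Dyck path underlying $\std(\D, \Upsilon)$ is $\D$ itself, so the job is to identify the bijection $\varphi: \ver(\D) \to \{1,\dots,m\}$ produced by the composition and verify that it satisfies the two order conditions. Since conditions (1) and (2) jointly determine a unique bijection (they impose a total order on $\ver(\D)$), uniqueness is automatic once existence is verified.

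First I would introduce the intermediate data $f := \cA_{\bw}(\D, \Upsilon)$, $\sigma := \action_{\bw}^{-1}(f) \in \minlength{\bw}{m}{n}$, and $\varphi := \cA^{-1}(\sigma)$. From the construction of $\cA_{\bw}$ in Theorem \ref{thm:bijection}, there exists a global shift $k$ such that $f(\gamma(v) - k) = \Upsilon(v)$ for every $v \in \ver(\D)$. Applying the same construction to $(\D, \varphi)$ (the case $\bw = (1^m)$, $r = m$) gives a global shift $k'$ with $\sigma(\gamma(v) - k') = \varphi(v)$. Because both shifts are global, they do not affect any comparison between distinct vertical steps; the entire question thus reduces to understanding the order-theoretic content of the relation $f = f_{\bw}\circ\sigma$.

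Next I would analyze the action map. The function $f_{\bw}$ is weakly increasing with level sets $f_{\bw}^{-1}(i) = [w_1 + \cdots + w_{i-1}+1,\, w_1 + \cdots + w_i]$ arranged as consecutive blocks in $\{1,\dots, m\}$. Hence $f = f_{\bw}\circ\sigma$ forces $\sigma$ to send the preimage $f^{-1}(i)$ bijectively onto the $i$th block. Remark \ref{rmk:increasing} says that $\sigma \in \minlength{\bw}{m}{}$ is equivalent to $\sigma^{-1}$ being increasing on each block $f_{\bw}^{-1}(i)$, which is in turn equivalent to $\sigma$ being order-preserving on each level set $f^{-1}(i)$.

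Combining these observations yields precisely conditions (1) and (2). Suppose $\Upsilon(v_i) < \Upsilon(v_j)$: then $f$ takes strictly smaller values at $\gamma(v_i) - k$ than at $\gamma(v_j) - k$, so $\sigma$ sends the former into a strictly earlier block of $f_{\bw}$, giving $\varphi(v_i) < \varphi(v_j)$. Suppose instead $\Upsilon(v_i) = \Upsilon(v_j)$ with $\gamma(v_i) < \gamma(v_j)$: then both positions lie in the same level set $f^{-1}(\Upsilon(v_i))$, and by the order-preservation of $\sigma$ on this level set we get $\sigma(\gamma(v_i) - k) < \sigma(\gamma(v_j) - k)$, hence $\varphi(v_i) < \varphi(v_j)$. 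The only real subtlety is bookkeeping the shifts $k$ and $k'$ consistently and checking that they are compatible (which ultimately holds because both are determined by the same Dyck path via Proposition \ref{prop:same dyck}); I expect this to be the main, though mild, obstacle.
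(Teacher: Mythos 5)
Your proposal is correct and takes essentially the same route as the paper: both unfold the definition of $\std$, use the block structure of $f_{\bw}$ together with the characterization of minimal-length coset representatives as those $\sigma$ with $\sigma^{-1}$ increasing on $f_{\bw}^{-1}(y)$ (Remark \ref{rmk:increasing}), and exploit that the shift in the definition of $\cA_{\bw}$ depends only on the underlying Dyck path. The only difference is cosmetic: the paper avoids explicit shift bookkeeping by reducing to $\tilde{f} = f_{\bw}\tilde{\sigma}$ via Remark \ref{rmk:recovering pi}, whereas you compare $k$ and $k'$ directly; the step you flag as a mild obstacle does close as you expect, since $k$ is determined by $\sum_{v \in \ver(\D)}\gamma(v)$ alone, so $\D = \D'$ (Proposition \ref{prop:same dyck}) forces $k = k'$.
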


\begin{proof}
Let $\bw := \wt(\D, \Upsilon)$, $f := \cA_{\bw}(\D, \Upsilon)$ and $\sigma := \cA(\D, \varphi)$, where $(\D, \varphi)$ is as described in the statement of the lemma. We need to check that
\[
f = f_{\bw}\circ \sigma
\]
and that $\sigma$ has minimal length with this property. Since both $(\D, \Upsilon)$ and $(\D, \varphi)$ have the same underlying Dyck path, by Remark \ref{rmk:recovering pi} it is enough to check that $\tilde{f} = f_{\bw}\tilde{\sigma}$. Now, for each $i = 1, \dots, r$, let $\Upsilon^{-1}(i) = \{v^{(i)}_{1}, \dots, v^{(i)}_{w_{i}}\}$ and $\gamma^{(i)}_{k} := \gamma(v^{(i)}_{k})$, with $k = 1, \dots, w_{i}$. We may and will assume that $\gamma^{(i)}_{1} < \gamma^{(i)}_{2} < \cdots < \gamma^{(i)}_{w_{i}}$. We have:
\[
\tilde{f}^{-1}(i) = \{\gamma^{(i)}_{1} < \gamma^{(i)}_{2} < \cdots < \gamma^{(i)}_{w_{i}}\}.
\]
Now, Condition (1) in the statement of the Lemma is equivalent to saying that $\varphi^{-1}\{1, \dots, w_{1}\} = \{v^{(1)}_{1}, \dots, v^{(1)}_{w_{1}}\}$, $\varphi^{-1}\{w_1 + 1, \dots, w_1 + w_2\} = \{v^{(2)}_{1}, \dots, v^{(2)}_{w_{2}}\}$, etc. From here, it follows that for each $i \in \{1, \dots, r\}$:
\[
(f_{\bw}\tilde{\sigma})^{-1}(i) = \tilde{\sigma}^{-1}\{w_{1} + \cdots + w_{i-1}+1, \dots, w_{1} + \cdots + w_{i-1} + w_{i}\} = \gamma\{v_{1}^{(i)}, \dots, v_{w_{i}}^{(i)}\} = \tilde{f}^{-1}(i).
\]

However, the fact that both $\tilde{f}$ and $f_{\bw}\tilde{\sigma}$ satisfy the same periodicity condition implies that this equality remains true for every $i \in \Z$, which means that $\tilde{f} = f_{\bw}\tilde{\sigma}$ and thus $f = f_{\bw}\sigma$.

To check that $\sigma$ has minimal length, we need to verify that $\sigma^{-1}$ is strictly increasing on $f^{-1}(y)$ for every $y \in \Z$ or, equivalently, for $y = 1, \dots, r$. It is again enough to check this property for $\tilde{\sigma}$ and $\tilde{f}$, which is easily seen to be equivalent to Condition (2) of the Lemma. 
\end{proof}

It is natural question to ask what the corresponding \emph{destandardization} map is. In particular, given any $(\D, \varphi) \in \park{m,n}$ and $r \in \Z_{>0}$ is there always a function $(\D, \Upsilon) \in \sspf{r}{m,n}$ that standardizes to $(\D, \varphi)$? Indeed, for $r \geq m$ since by \eqref{eq:sspf-nested} and \eqref{eq:park-nested-sspf} $\park{m,n} = \sspf{(1^m)}{m,n} \subset \sspf{m}{m,n} \subseteq \sspf{r}{m,n}$ and by Remark \ref{rem:PFstdtoPF} parking functions standardize to themselves, then one can set $\Upsilon = \varphi$.  When $r<m$ however, there exist parking functions $(\D, \varphi) \in \park{m,n}$ for which no $(\D, \Upsilon) \in \sspf{r}{m,n}$ will standardize to $(\D, \varphi)$ (e.g. take $m=4,n=3,r=2$ and consider $\cA^{-1}([-1,1,4,6])$).  

More generally, let us define the $\bw$-semistandardization of a parking function $(\D, \varphi)$ to be the semistandard parking function with the same underlying Dyck path but with $\Upsilon(\varphi^{-1}(i)) = j$ if $i \in [w_{1} + \cdots + w_{j-1} + 1, w_{1} + \cdots + w_{j}]$. Pictorially, $(\D, \Upsilon)$ is obtained by replacing the labels $1, 2, \dots, w_1$ of the parking function by $1$, the labels $w_1 + 1, \dots, w_1 + w_2$ by $2$, and so on. Let us denote $(\D, \Upsilon) =: \sstd{\bw}(D, \varphi)$. By translating through the Anderson labels \eqref{eq:anderson} we may also define the semistandardization of an $n$-stable affine permutation, $\sstd{\bw}(\sigma) \in \affcz{\bw}(m,r)^{n}$.

Note that $\bw$-semistandardization is a left inverse to standardization: if $(\D, \Upsilon)$ is a semistandard parking function of weight $\bw$, then $\sstd{\bw}(\std(\D, \Upsilon)) = (\D, \Upsilon)$. It is not true, however, that $\std(\sstd{\bw}(\D, \varphi)) = (\D, \varphi)$ for a parking function $(\D, \varphi)$. See Figure \ref{fig:semistandardization} below. 

Finally, note that we have: 

\begin{equation}\label{eq:PF=SSPF}
\std_r^{-1}(\park{m,n})= \bigcup_{\substack{\bw \models m \\ \ell(\bw)=r}} \sspf{\bw}{m,n}= \sspf{r}{m,n}.
\end{equation}

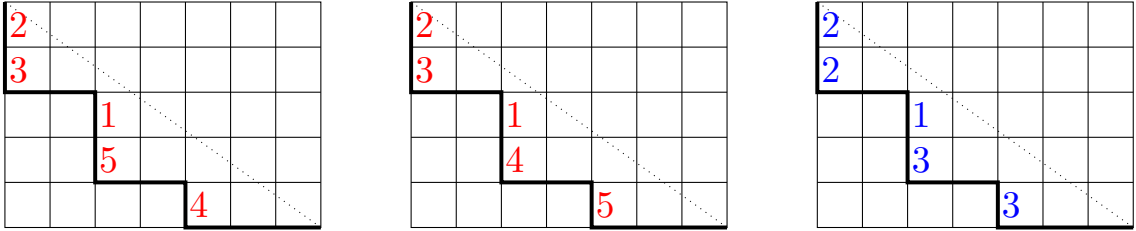
\begin{figure}[ht]
\begin{tikzpicture}[scale=0.6]
\draw(0,0)--(7,0)--(7,5)--(0,5)--cycle;
\draw(0,1)--(7,1);
\draw(0,2)--(7,2);
\draw(0,3)--(7,3);
\draw(0,4)--(7,4);
\draw(1,0)--(1,5);
\draw(2,0)--(2,5);
\draw(3,0)--(3,5);
\draw(4,0)--(4,5);
\draw(5,0)--(5,5);
\draw(6,0)--(6,5);
\draw[dotted](7,0)--(0,5);
\draw[line width = 1.5](0,5)--(0,3)--(2,3)--(2,1)--(4,1)--(4,0)--(7,0);
\node at (0.3, 4.5) [scale=1.3]{\color{red} 2};
\node at (0.3, 3.5) [scale=1.3]{\color{red} 3};
\node at (2.3, 2.5) [scale=1.3]{\color{red} 1};
\node at (2.3, 1.5) [scale=1.3]{\color{red} 5};
\node at (4.3, 0.5) [scale=1.3]{\color{red} 4};

\draw(18,0)--(25,0)--(25,5)--(18,5)--cycle;
\draw(18,1)--(25,1);
\draw(18,2)--(25,2);
\draw(18,3)--(25,3);
\draw(18,4)--(25,4);
\draw(19,0)--(19,5);
\draw(20,0)--(20,5);
\draw(21,0)--(21,5);
\draw(22,0)--(22,5);
\draw(23,0)--(23,5);
\draw(24,0)--(24,5);
\draw[dotted](25,0)--(18,5);
\draw[line width = 1.5](18,5)--(18,3)--(20,3)--(20,1)--(22,1)--(22,0)--(25,0);

\node at (18.3, 4.5) [scale=1.3]{\color{blue} 2};
\node at (18.3, 3.5) [scale=1.3]{\color{blue} 2};
\node at (20.3, 2.5)[scale=1.3] {\color{blue} 1};
\node at (20.3, 1.5) [scale=1.3]{\color{blue} 3};
\node at (22.3, 0.5)[scale=1.3] {\color{blue} 3};

\draw(9,0)--(16,0)--(16,5)--(9,5)--cycle;
\draw(9,1)--(16,1);
\draw(9,2)--(16,2);
\draw(9,3)--(16,3);
\draw(9,4)--(16,4);
\draw(10,0)--(10,5);
\draw(11,0)--(11,5);
\draw(12,0)--(12,5);
\draw(13,0)--(13,5);
\draw(14,0)--(14,5);
\draw(15,0)--(15,5);
\draw[dotted](16,0)--(9,5);
\draw[line width = 1.5](9,5)--(9,3)--(11,3)--(11,1)--(13,1)--(13,0)--(16,0);

\node at (9.3, 4.5) [scale=1.3]{\color{red} 2};
\node at (9.3, 3.5) [scale=1.3]{\color{red} 3};
\node at (11.3, 2.5) [scale=1.3]{\color{red} 1};
\node at (11.3, 1.5) [scale=1.3]{\color{red} 4};
\node at (13.3, 0.5) [scale=1.3]{\color{red} 5};
\end{tikzpicture}
\caption{For $\bw = (1,2,2)$, the preimage of the semistandard parking function on the right under $\bw$-semistandardization consists precisely of the parking functions in the center and left. In particular, only the leftmost parking function is the standardization. }
\label{fig:semistandardization}
\end{figure}

\subsection{The dinv statistic and higher rank $(q,t)$-Catalan polynomials}\label{sec:proofmain1}
\begin{definition}\label{def:dinv}
Let $(\D, \Upsilon)$ be a rank $r$ semistandard $(m,n)$-parking function and $\std(D, \Upsilon) \in \park{m,n}$ its standardization.  Define, 
\begin{equation}\label{eq:dinv}
\dinv(\D, \Upsilon) := \dinv(\std(\D, \Upsilon)).
\end{equation}
\end{definition}

Since $(m,n)$-parking functions standardize to themselves, \eqref{eq:dinv} clearly recovers $\dinv$ for $\park{m,n}$. In Appendix \ref{sec:appendix} we give an equivalent description of $\dinv$ for $\sspf{r}{m,n}$ in terms of laser rays which naturally recovers the $\dinv$ statistic for Dyck paths.

As we will see in Section \ref{sec:springer}, the set of rank $r$ semistandard parking functions label the affine spaces in an affine paving of a parabolic affine Springer fiber,  and the co-dinv statistic is the dimension of the corresponding affine space. We are finally ready to define the higher rank $(q,t)$-Catalan polynomial.

\begin{definition}
The \newword{higher rank $(q,t)$-Catalan polynomials} are the family of polynomials, 
\begin{equation}
C^{(r)}_{(m,n)}(x_1, \dots, x_r;q,t) := \sum_{(\D, \Upsilon) \in \sspf{r}{m,n}}q^{\area(\D,\Upsilon)}t^{\dinv(\D, \Upsilon)}x^{\we(\D, \Upsilon)}, 
\end{equation}
where $m, n, r$ are positive integers and $\gcd(m,n) = 1$. 
\end{definition}

To obtain more information about $C^{(r)}_{(m,n)}(x_1, \dots, x_r;q,t)$, we relate the Gessel quasi-symmetric functions of Section \ref{sec:parking} to the standardization procedure of Section \ref{sec:standardization}. Let us denote by $Q_{S}^{(r)}$ the truncation to $r$-variables of $Q_{S}$. In other words:
\begin{equation}
Q^{(r)}_{S}(X) := \sum_{\substack{1 \leq i_1 \leq \cdots \leq i_m \leq r \\ i_j < i_{j+1} \; \text{if} \; j \in S}}x_{i_{1}}\dots x_{i_{m}}.
\end{equation}

\begin{theorem}\label{thm:gessel vs standardization}
Let $(\D, \varphi)$ be an $(m,n)$-parking function, and let $\std^{-1}_{r}(\D, \varphi)$ be the set of rank $r$-semistandard $(m,n)$-parking functions that standardize to $(\D, \varphi)$. Let also $\omega = \mathcal{A}(\D, \varphi) \in \affsym{m}{n}$. Then,
\[
Q^{(r)}_{\des(\omega^{-1})}(X) = \sum_{(\D, \Upsilon) \in \std^{-1}_{r}(\D, \varphi)}x^{\we(\D, \Upsilon)}.
\]
\end{theorem}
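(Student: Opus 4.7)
The plan is to construct a weight-preserving bijection between $\std^{-1}_r(\D,\varphi)$ and the set of sequences $1\leq i_1\leq \cdots \leq i_m\leq r$ with $i_j<i_{j+1}$ whenever $j\in \des(\omega^{-1})$, which is precisely the indexing set of $Q^{(r)}_{\des(\omega^{-1})}(X)$. The map sends $(\D,\Upsilon)\in \std^{-1}_r(\D,\varphi)$ to the sequence $\bigl(i_1,\dots,i_m\bigr)$ defined by $i_k := \Upsilon(\varphi^{-1}(k))$. The weight-preservation is immediate since $x_{i_1}x_{i_2}\cdots x_{i_m}$ records $x_j$ exactly $|\Upsilon^{-1}(j)|=w_j$ times, hence equals $x^{\we(\D,\Upsilon)}$.

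To show the sequence is weakly increasing, I would use Lemma~\ref{lem:standardization}(1): if $\Upsilon(\varphi^{-1}(k))>\Upsilon(\varphi^{-1}(k+1))$, then by (1) we would need $\varphi(\varphi^{-1}(k))>\varphi(\varphi^{-1}(k+1))$, i.e. $k>k+1$, absurd. For the strict-increase condition at descents, I would first unpack the bijection $\mathcal{A}$ to see that, up to the global shift $k$ in \eqref{eq:def A-map}, $\omega^{-1}(j) = \gamma(\varphi^{-1}(j))-k$ for $j\in[1,m]$. Thus $j\in\des(\omega^{-1})$ means $\gamma(\varphi^{-1}(j+1))<\gamma(\varphi^{-1}(j))$. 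Now suppose $i_j=i_{j+1}$: then $\Upsilon(\varphi^{-1}(j))=\Upsilon(\varphi^{-1}(j+1))$, and since $\varphi(\varphi^{-1}(j))<\varphi(\varphi^{-1}(j+1))$, Lemma~\ref{lem:standardization}(2) forces $\gamma(\varphi^{-1}(j))<\gamma(\varphi^{-1}(j+1))$, so $j\notin\des(\omega^{-1})$. Contrapositively, $j\in\des(\omega^{-1})$ forces $i_j<i_{j+1}$.

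The inverse map sends a valid sequence $(i_1,\dots,i_m)$ to the pair $(\D,\Upsilon)$ with $\Upsilon(\varphi^{-1}(k)):=i_k$. I would verify two things: first, that $(\D,\Upsilon)$ is semistandard, which follows from the fact that consecutive vertical steps $v$ above $v'$ satisfy $\varphi(v)<\varphi(v')$ together with weak monotonicity of $(i_k)$; second, that $\std(\D,\Upsilon)=(\D,\varphi)$, which by Lemma~\ref{lem:standardization} reduces to verifying its two characterizing conditions. Condition (1) follows directly from weak monotonicity; condition (2) is the crucial place where the strict-increase assumption at descents is used, arguing by contradiction: if $\Upsilon(v)=\Upsilon(v')$ and $\varphi(v)>\varphi(v')$, then the entries $i_{\varphi(v')},\dots,i_{\varphi(v)}$ form a constant block, so no index in between is a descent of $\omega^{-1}$, whence $\gamma(v')<\gamma(v)$ by the unpacking of $\omega^{-1}$ above.

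The main obstacle I expect is bookkeeping the descent condition cleanly: the translation between descents of $\omega^{-1}$ as an affine permutation and the Anderson-label ordering of the $\varphi^{-1}(k)$ requires carefully invoking the explicit construction of $\mathcal{A}$ in Theorem~\ref{thm:bijection}, and ensuring the global shift $k$ in \eqref{eq:def A-map} does not affect which pairs $(j,j+1)$ are descents (which it does not, since it is a uniform shift). Once this translation is established, the rest of the argument is a straightforward symmetric check of the two conditions in Lemma~\ref{lem:standardization} against the two conditions defining the summation in $Q^{(r)}_{\des(\omega^{-1})}$.
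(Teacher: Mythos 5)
Your proof follows the same approach as the paper's: translate descents of $\omega^{-1}$ into Anderson-label comparisons $\gamma(\varphi^{-1}(j+1)) < \gamma(\varphi^{-1}(j))$ via the uniform shift in $\mathcal{A}$, then use the two clauses of Lemma~\ref{lem:standardization} to match the weak/strict inequalities of the Gessel indexing set against the sequence $i_k = \Upsilon(\varphi^{-1}(k))$. The paper's write-up only verifies explicitly that the map $(\D,\Upsilon)\mapsto(i_1,\dots,i_m)$ lands inside the Gessel indexing set, leaving surjectivity implicit; your explicit construction and verification of the inverse map (checking semistandardness and both characterizing conditions of Lemma~\ref{lem:standardization}) is a welcome bit of extra rigor, but the underlying argument is identical.
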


\begin{proof}
Let us first set some notation. Given an $(m,n)$-parking function, denote:
\[
\gamma_{i} := \gamma(\varphi^{-1}(i)) \in \Z, i = 1, \dots, m, \qquad \omega := \mathcal{A}(\D, \varphi) \in \affsym{m}{n}.
\]
We claim that, for $i = 1, \dots, m-1$, $i \in \des(\omega^{-1})$ if and only if $\gamma_{i+1} < \gamma_{i}$. Indeed, recall that $\omega$ is defined by first defining $\tilde{\omega}$ and then shifting. Moreover, $\tilde{\omega}$ is defined by its inverse: $\tilde{\omega}^{-1}(i) = \gamma_{i}$ so that $\omega^{-1}(i) = \gamma_{i} + k$ for some $k$ which is independent of $i$. From here, the claim follows easily.

Now suppose $(\D, \Upsilon)$ is any semistandard parking function that standardizes to $(\D, \varphi)$. Set $a_{i} := \Upsilon(\varphi^{-1}(i))$ to be the values of the vertical steps as labeled by $\Upsilon$. By Lemma \ref{lem:standardization}, necessarily
\[
a_{1} \leq a_{2} \leq \cdots \leq a_{m}.
\]
Thus, by the first claim above it suffices to show show that $a_{i} < a_{i+1}$ if $i$ is in the descent set of $\omega^{-1}$. Assume on the contrary that $a_{i} = a_{i+1}$ with $i \in \des(\omega^{-1})$. Note that we may assume that $i \neq m$ (otherwise the statement $a_{i} = a_{i+1}$ does not make sense). 
Then by the claim above,  $\gamma_{i+1} < \gamma_{i}$.
 Once again, by Lemma \ref{lem:standardization}, this implies that $i+1 = \varphi(\varphi^{-1}({i+1})) < \varphi(\varphi^{-1}({i})) = i$, a contradiction. The result follows.
\end{proof}

Recall the truncation of the Hikita polynomial \eqref{eq:truncatedHik}, 
\[
\hik_{(m,n)}^{(r)}(x_1\dots, x_r;q,t) := \hik_{(m,n)}(X;q,t)|_{x_{r+1} = x_{r+2} = \cdots = 0}.
\]

\begin{theorem}\label{thm:Cat=Hik}
For any positive integers $m,n,r$ with $m,n$ coprime, we have
\[
C^{(r)}_{(m,n)}(x_1, \dots, x_r; q,t) = \hik_{(m,n)}^{(r)}(x_1\dots, x_r;q,t).
\]
In particular, $C^{(r)}_{(m,n)}(x_1, \dots, x_r; q,t)$ is symmetric in $q,t$, as well as symmetric in $x_1, \dots, x_r$. 
\end{theorem}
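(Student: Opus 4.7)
The plan is to show the identity by a direct substitution: expand the truncated Hikita polynomial according to its definition, replace each truncated Gessel quasisymmetric function by the sum provided by Theorem \ref{thm:gessel vs standardization}, and then repackage the resulting double sum as a single sum over $\sspf{r}{m,n}$ using the standardization map. The key ingredients to line up are: (i) the Gessel-vs-standardization identity already proved; (ii) the fact that standardization does not change the underlying Dyck path (Proposition \ref{prop:same dyck}); (iii) the definition $\dinv(\D,\Upsilon) := \dinv(\std(\D,\Upsilon))$ of Definition \ref{def:dinv}; and (iv) the decomposition \eqref{eq:PF=SSPF} showing that $\std_{r}^{-1}$ partitions $\sspf{r}{m,n}$ over $\park{m,n}$.

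First I would write
\[
\hik^{(r)}_{(m,n)}(x_1,\dots,x_r;q,t) = \sum_{(\D,\varphi)\in\park{m,n}} q^{\area(\D,\varphi)} t^{\dinv(\D,\varphi)}\, Q^{(r)}_{\des(\omega^{-1})}(x_1,\dots,x_r),
\]
where $\omega = \cA(\D,\varphi)$, since setting $x_{r+1}=x_{r+2}=\cdots=0$ in each $Q_{S}(X)$ produces exactly $Q^{(r)}_{S}(x_1,\dots,x_r)$ from its defining sum \eqref{eq:gessel}. Then I would apply Theorem \ref{thm:gessel vs standardization} to each such $Q^{(r)}_{\des(\omega^{-1})}$, converting the above into the double sum
\[
\sum_{(\D,\varphi)\in\park{m,n}}\; \sum_{(\D',\Upsilon)\in \std_{r}^{-1}(\D,\varphi)} q^{\area(\D,\varphi)} t^{\dinv(\D,\varphi)} x^{\we(\D',\Upsilon)}.
\]

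Next I would use Proposition \ref{prop:same dyck} to conclude that for every $(\D',\Upsilon)\in\std_{r}^{-1}(\D,\varphi)$ we have $\D'=\D$, hence $\area(\D,\varphi)=\area(\D)=\area(\D',\Upsilon)$; and by Definition \ref{def:dinv}, $\dinv(\D',\Upsilon)=\dinv(\std(\D',\Upsilon))=\dinv(\D,\varphi)$. Pushing these two equalities inside the inner sum, the $q$- and $t$-weights move in together with $x^{\we(\D',\Upsilon)}$, giving
\[
\sum_{(\D,\varphi)\in\park{m,n}}\; \sum_{(\D',\Upsilon)\in \std_{r}^{-1}(\D,\varphi)} q^{\area(\D',\Upsilon)} t^{\dinv(\D',\Upsilon)} x^{\we(\D',\Upsilon)}.
\]
Finally, \eqref{eq:PF=SSPF} expresses $\sspf{r}{m,n}$ as the disjoint union of the fibers $\std_{r}^{-1}(\D,\varphi)$ as $(\D,\varphi)$ ranges over $\park{m,n}$, so collapsing the double sum into a single sum over $\sspf{r}{m,n}$ yields exactly $C^{(r)}_{(m,n)}(x_1,\dots,x_r;q,t)$, proving the main equality. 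The symmetry statements then follow immediately: $(q,t)$-symmetry and Schur (hence symmetric-function) positivity of $\hik_{(m,n)}(X;q,t)$ are known, and both properties are preserved under the truncation $x_{r+1}=x_{r+2}=\cdots=0$.

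The only nontrivial content here is Theorem \ref{thm:gessel vs standardization} itself, which is already in place; the rest is bookkeeping, so I do not anticipate a serious obstacle. The one point demanding care is verifying that area and dinv are genuinely constant on each fiber $\std_{r}^{-1}(\D,\varphi)$, but this is immediate from Proposition \ref{prop:same dyck} and Definition \ref{def:dinv} respectively.
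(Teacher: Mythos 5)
Your proposal follows essentially the same route as the paper's proof: expand $\hik_{(m,n)}^{(r)}$ over $\park{m,n}$, substitute Theorem \ref{thm:gessel vs standardization} for each $Q^{(r)}_{\des(\omega^{-1})}$, observe that $\area$ and $\dinv$ are constant on standardization fibers, and collapse the double sum via \eqref{eq:PF=SSPF}. Your version is slightly more explicit than the paper's (you cite Proposition \ref{prop:same dyck} and Definition \ref{def:dinv} by name where the paper just says "both $\area$ and $\dinv$ are preserved under standardization"), but the argument is the same.
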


\begin{proof}
The $q,t$ symmetry and $x_1, \cdots, x_r$ symmetry are inherited under truncation. The equality is an easy consequence of Theorem \ref{thm:gessel vs standardization}. Namely, from \eqref{eq:PF=SSPF} we have that
\[ \bigcup_{(\D,\varphi) \in \park{m,n}} \lbrace (\D,\Upsilon) \in \sspf{r}{m,n} \vert \std(\D, \Upsilon) = (\D,\varphi) \rbrace  = \sspf{r}{m,n},  \]
and since both $\area$ and $\dinv$ are preserved under standardization,  then the following equalities hold:
\begin{align*}
\hik_{(m,n)}^{(r)}(x_1\dots, x_r;q,t) 
&= \sum_{(\D, \varphi) \in \park{m,n}}q^{\area(\D, \varphi)}t^{\dinv(\D, \varphi)}Q_{\des(\omega^{-1})}^{(r)}(X)\\
&= \sum_{(\D, \varphi) \in \park{m,n}}\sum_{(\D, \Upsilon) \in \std^{-1}_{r}(\D, \varphi)}q^{\area(\D, \Upsilon)}t^{\dinv(\D, \Upsilon)}x^{\we(\D, \Upsilon)} \\
&=\sum_{(\D, \Upsilon) \in \sspf{r}{m,n}}q^{\area(\D,\Upsilon)}t^{\dinv(\D, \Upsilon)}x^{\we(\D, \Upsilon)}\\
&= C^{(r)}_{(m,n)}(x_1, \dots, x_r;q,t).
\end{align*}
\end{proof}

\section{Affine Springer Fibers} \label{sec:springer}
Having defined and studied the higher rank Catalan polynomial, in this section we look at a geometric interpretation for the dinv statistic. This is inspired, on the one hand, by \cite{GMV} where the authors interpret the dinv of a parking function in terms of the geometry of affine Springer fibers on the affine flag variety. The idea to simultaneously look at several parabolic affine Springer fibers comes from \cite{GSV}, where a $\mathfrak{gl}$-version of the varieties we consider here appeared in connection to the representation theory of quantized Gieseker varieties.  Throughout this section, geometric considerations make it more natural to replace $\{1, \ldots, r\}$ by  $\{0, \ldots, r-1\}$ and $\{1, \ldots, m\}$  by $\{0, \ldots, m-1\}$.

\subsection{Affine partial flag varieties}\label{sec:affflags}
We consider the group $G := \SL_{m}$. We will first work over
the field $\K := \C((\ep))$ with ring of integers $\CO = \C[[\ep]]$. Let $\widetilde{P} \subseteq G(\C)$ be a parabolic subgroup, and $P \subseteq G(\CO) \subseteq G(\K)$ the pullback of $\tilde{P}$ under the evaluation $\ep \mapsto 0$. The group $P$ is known as a parahoric subgroup of $G(\K)$.

We will be interested in the following setting. Let $\bw:= (w_{0}, \dots, w_{r-1})$ be a composition of $m$ with $r$ parts (recall that our convention is that some of the parts may be zero), and $\widetilde{P} = \widetilde{P}^{\bw} \subseteq G$ the parabolic subgroup of block lower-triangular matrices

\begin{equation}
\widetilde{P}^{\bw} := \left(\begin{matrix} 
A_{00} & 0 & 0 & \cdots & 0 \\
A_{10} & A_{11} & 0  & \cdots & 0 \\
A_{20} & A_{21} & A_{22} & \cdots & 0 \\
\vdots & \vdots & \vdots & \ddots & \vdots \\
A_{r-1,0} & A_{r-1, 2} & A_{r-1, 3} & \cdots & A_{r-1, r-1}
\end{matrix}\right)
\end{equation}
where $A_{i,j}$ is a matrix of size $w_{i} \times w_{j}$. We let $P^{\bw} \subseteq G(\K)$ be the associated parahoric subgroup, with corresponding parahoric subalgebra $\mathfrak{p}^{\bw} \subseteq \mathfrak{g}(\K)$,
which is defined in a similar way. Note that, in fact, $\mathfrak{p}^{\bw} \subseteq \mathfrak{g}(\CO) \subseteq \mathfrak{g}(\K)$. Now the \emph{affine partial flag variety} is defined to be:
\begin{equation}
\Fl^{\bw} := G(\K)/P^{\bw}.
\end{equation}
This is an ind-variety.
For $g \in G(\K)$ we denote its coset in $\Fl^{\bw}$ by $[g]$. 
 There is a well-known moduli interpretation of $\Fl^{\bw}$ that we now recall, cf. \cite{LusztigSmelt}. Let $V := \C^{m}$, $V[[\ep]] := \CO \otimes_\C V$ and $V((\ep)) := \K \otimes_\C V$.
Recall that an $\CO$-lattice in $V((\ep))$ is a
free $\CO$-submodule $L \subseteq V((\ep))$ of  rank $m$.
 For example, $V[[\ep]]$ is such a lattice, known as the \emph{standard lattice}. Note that we automatically have that $\K \otimes_\C L = V((\ep))$ and that there exists $N > 0$ such that
\begin{equation}\label{eqn:bounds}
\ep^{N}V[[\ep]] \subseteq L \subseteq \ep^{-N}V[[\ep]].
\end{equation}

We say that a lattice $L \subseteq V((\ep))$ is of $\SL$-type if 
\[
\dim_\C(V[[\ep]]/L \cap V[[\ep]]) = \dim_\C(L/L\cap V[[\ep]]).
\]
Notice that due to \eqref{eqn:bounds} both sides are finite numbers. 
Then, $\Fl^{\bw}$ is the moduli space of flags of lattices
\begin{equation}\label{eqn: chain}
L_{0} \supseteq L_{1} \supseteq \cdots \supseteq L_{r} = \ep L_{0}
\end{equation}
such that $\dim_\C(L_{i}/L_{i+1}) = w_{i}$ and $L_{0}$ is of $\SL$-type. To see this isomorphism fix a basis $v_1, \dots, v_n$ of $V$. For each $i = 0, \dots, r$ let $V_{i} \subseteq V((\ep))$ be the free $\CO$-submodule generated by $v_{w_{0} + \cdots + w_{i-1} + 1}, \dots,$ $ v_{n}, \ep v_{1}, \dots \ep v_{n - w_{0} - \cdots - w_{i-1}}$. Note that $V_{0} = V[[\ep]]$, $V_{r} = \ep V[[\ep]]$ so we have a chain
\begin{equation}\label{eqn:standard flag}
V_{0} \supseteq V_{1} \supseteq \cdots \supseteq V_{r} = \ep V_{0}.
\end{equation}
Now the group $G(\K)$ acts transitively on the space of flags of lattices, and the stabilizer of the flag \eqref{eqn:standard flag} is precisely $P^{\bw}$, whence it follows that $\Fl^{\bw}$ is indeed the moduli space of flags of lattices. 

 We will extend the chain \eqref{eqn: chain} to have a lattice $L_{i}$ for $i \in \Z$ by setting $L_{i+r} := \ep L_{i}$.
We denote this chain $(L_i)$.

\subsection{Affine Springer fibers}\label{sec:affspringer} Fixing a basis $v_{1}, v_{2}, \dots, v_{m}$ of $V$, we will consider the following element of $\GL_{m}(\K)$: 
\begin{equation}
\widetilde{\gam} = \left(\begin{matrix}0 & 0 & \cdots &  0 & \ep \\ 1 & 0 & \cdots & 0 & 0 \\ 0 & 1 & \cdots & 0 & 0 \\ \vdots & \vdots & \ddots & \vdots & \vdots \\ 0 & 0 & \cdots & 1 & 0  \end{matrix}\right), \quad \widetilde{\gam}: v_{1} \mapsto v_{2} \mapsto \cdots \mapsto v_{m} \mapsto \ep v_{1} 
\end{equation} 
and $\gam := \widetilde{\gam}^{n}$. Since $m$ and $n$ are coprime, the element $\gam$ is \emph{nil-elliptic} in the sense of Kazhdan and Lusztig \cite{KazhdanLusztig}. In particular, the space

\begin{equation}
X^{\bw}_{(m,n)} := \{[g] \in \Fl^{\bw} \mid g^{-1}\widetilde{\gam}g \in \mathfrak{p}^{\bw}\} = \{(L_{i}) \in \Fl^{\bw} \mid \gam(L_{i}) \subseteq L_{i} \; \text{for every} \; i\}
\end{equation}
is a projective variety. Our goal is to study the homology of this space. In order to do this, first we re-interpret it in a standard way, see \cite{GMV,LusztigSmelt}. Recall the basis $v_{1}, \dots, v_{m}$ of $V$. We define $v_{i + m} := \ep v_{i}$, so that now we have $v_{i}$ for all $i \in \Z$, and the map $\gam$ can be succinctly written as $\gam: v_{i} \mapsto v_{i+n}$. 

Let us introduce an auxiliary variable $z := \ep^{1/m}$, so that we have an isomorphism of $\K$-vector spaces $V((\ep)) \to \C((z))$ given by $v_{i} \mapsto z^{i-1}$. Note that this isomorphism induces an isomorphism $V[[\ep]] \mapsto \C[[z]]$. The map $\gam$ then is simply multiplication by $z^{n}$, and multiplication by $\ep$ corresponds to multiplication by $z^{m}$. Thus, $X^{\bw}_{(m,n)}$ is also the moduli space of flags
\[
L_{0} \supseteq L_{1} \supseteq \cdots \supseteq L_{r} = z^{m}L_{0}
\]
where
\begin{itemize}
\item[(a)] $L_{i}$ is a $\C[[z^{m}, z^{n}]]$-submodule of $\C((z))$,
\item[(b)] $\dim_\C(L_{0}/\C[[z]]\cap L_{0}) = \dim_\C(\C[[z]]/\C[[z]]\cap L_{0})$, and
\item[(c)] $\dim_\C(L_{i}/L_{i+1}) = w_{i}$. 
\end{itemize}

Just as in the previous subsection, we define $L_{i+r} := z^{m}L_{r}$. Thus, $L_{i}$ is well-defined for every $i \in \Z$.

Let us remark that, when $w_{i} \leq 1$ for every $i$, then $\Fl^{\bw}$ is the affine flag variety. In this case, $X^{\bw}_{(m,n)}$ has been extensively studied in  \cite{GMV, hikita, KazhdanLusztig, LusztigSmelt} and many more. When $w_{i} = m$ for some $i$ (and thus necessarily $w_{j} = 0$ for $j \neq i$) the variety $\Fl^{\bw}$ is the affine Grassmannian and $X^{\bw}_{(m,n)}$ has been studied in, for example, \cite{GorskyMazin1, GorskyMazin2, hikita}. For general $\bw$, the varieties $X^{\bw}_{(m,n)}$ have appeared in \cite{hikita}, where it was proven that they admit an affine paving with cells indexed by $\sspf{\bw}{m,n}$. We will re-derive Hikita's results below and give a formula for the dimension of the affine cells in terms of affine permutations, in the spirit of \cite{GMV}. 

\subsection{Loop rotation and fixed points}\label{sect: fixed points} The field $\C((z))$ has a natural valuation 
\begin{equation}
\nu: \C((z))^{\times} \to \Z, \qquad g = \sum_{i}g_{i}z^{i} \mapsto \min\{i \mid g_{i} \neq 0\}.
\end{equation}
For $M \subseteq \C((z))$, we set $\nu(M) := \{\nu(g) \mid g \in M\setminus\{0\}\} \subseteq \Z$.

We have a $\C^{\times}$-action on $\C((z))$ given by $s\cdot g(z) = g(sz)$. Note that, upon the identification $\C((z)) = \C((\varepsilon^{1/m}))$, this is a positive rational multiple of the loop rotation, so it induces an action on $\Fl^{\bw}$ for every composition $\bw = (w_{0}, \dots, w_{r-1}) \models m$. Since the map $\gam$ is multiplication by $z^{n}$, which is homogeneous under the $\C^{\times}$-action, the variety $X^{\bw}_{(m,n)}$ is closed under the $\C^{\times}$-action on $\Fl^{\bw}$. We would like to give a description of the fixed points of this action. The chain
\[
L_{0} \supseteq L_{1} \supseteq \cdots \supseteq L_{r} = z^{m}L_{0}
\]
is fixed under the $\C^{\times}$-action if and only if each $\C[[z^{m}, z^{n}]]$-submodule $L_{i}$ is. It is known (see e.g. \cite{Yun}) that such a submodule $L_{i}$ is fixed by the $\C^{\times}$-action if and only if it is generated by Laurent monomials in $z$.
If we consider $\nu(L_{i}) \subseteq \Z$ we have a chain of subsets of $\Z$, 
\[
\nu(L_0) \supseteq \nu(L_1) \supseteq \cdots \supseteq \nu(L_{r}) = m + \nu(L_0).
\]
The fact that $L_{i}$ is closed under multiplication by both $z^m$ and $z^n$ implies that $\nu(L_{i})$ is closed under adding $m$ and $n$. Following \cite{GorskyMazin1}, we call such sets $(m, n)$-invariant subsets of $\Z$. Thus, we see that the $\C^{\times}$-fixed points of $X^{\bw}_{(m,n)}$ correspond to chains,
\[
N_{0} \supseteq N_{1} \supseteq \cdots \supseteq N_{r} = N_{0} + m
\]
of $(m, n)$-invariant subsets of $\Z$, subject to the conditions,
\begin{itemize}
    \item[(a)] $|N_{i} \setminus N_{i+1}| = w_{i}$, and 
\label{item-a}
    \item[(b)] $|\Z_{\geq 0} \setminus N_{0}| - |N_{0} \setminus \Z_{\geq 0}| = 0$.
\label{item-b}
\end{itemize}

Note that we can extend this chain to a chain of $(m, n)$-invariant subsets indexed by $i \in \Z$ by declaring $N_{i + r} = N_{i} + m$. Note that $N_{i} \supseteq N_{i+1}$ for every $i \in \Z$. 

For every chain $(N_{i})$ we would like to associate an $(m,r)$-affine composition. To do this, consider the following slight modification of Definition \ref{def:affcomp}. A function $f: \Z \to \Z$ is a \emph{0-centric $(m,r)$-affine composition} if the following holds:
\begin{enumerate}
\item $f(x+m) = f(x) + r$ for every $x \in \Z$,
\label{item- m period}
\item $f^{-1}[0,r-1]$ contains exactly $m$ elements, which are pairwise distinct modulo $m$,
\label{item- distinct mod m}
\item $\sum\limits_{x \in f^{-1}[0,r-1]}x \quad  = 0 + \cdots + m-1 = \binom{m}{2}$. 
\label{item- sum normalized}
\end{enumerate}
Let us denote by $\affcz{}(m,r)$ the set of $0$-centric affine compositions. It is clear that we have a natural bijection $\affc{}(m,r) \to \affcz{}(m,r)$ inducing bijections $\affc{\bw}^{n}(m,r) \to \affcz{\bw}^{n}(m,r)$ for every composition of $m$ with $r$ parts and $n > 0$, for the obvious definition of $\affcz{\bw}^{n}(m,r)$. The $0$-centric affine compositions are, however, more natural from the point of view of chains of $(m,n)$-invariant subsets of $\Z$, as the following lemma shows. 

\begin{lemma}\label{lem:from chains to comps}
Let $(N_{i})$ be a chain of $(m,n)$-invariant subsets of $\Z$ satisfying (a) and (b) above. Define $f: \Z \to \Z$ via
\[
f^{-1}(i) = N_{i}\setminus N_{i+1}
\]
Then, $f$ is an $n$-stable $0$-centric $(m,r)$-affine composition of weight $\bw$. 
\end{lemma}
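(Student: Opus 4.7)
The plan is to verify, one at a time, each defining property of a $0$-centric $(m,r)$-affine composition, working residue class by residue class modulo $m$ and leveraging the closure of each $N_i$ under $+m$ and $+n$.

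First I would check that $f$ is a well-defined function on all of $\Z$. Each $L_i$ is an $\CO$-lattice, so by \eqref{eqn:bounds} the set $N_i = \nu(L_i)$ is bounded below in every residue class modulo $m$ and contains all sufficiently large integers. Together with the periodicity $N_{i+r} = N_i + m$, this implies that for each $x \in \Z$ the set $\{i \in \Z : x \in N_i\}$ has the form $(-\infty, i_0] \cap \Z$, so $x$ lies in $N_{i_0} \setminus N_{i_0+1}$ for a unique $i_0$.

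Condition (1) of the definition is then immediate: if $x \in N_i \setminus N_{i+1}$, then $x + m \in (N_i + m) \setminus (N_{i+1} + m) = N_{i+r} \setminus N_{i+r+1}$, giving $f(x+m) = f(x) + r$. The weight identity $|f^{-1}(i)| = |N_i \setminus N_{i+1}| = w_i$ is just hypothesis (a), so $\we(f) = \bw$. For condition (2), telescoping yields
\[
f^{-1}[0, r-1] = \bigsqcup_{i=0}^{r-1}(N_i \setminus N_{i+1}) = N_0 \setminus N_r = N_0 \setminus (N_0 + m).
\]
Since $N_0$ is closed under $+m$ and bounded below in each residue class, $N_0 \cap (j + m\Z) = \{a_j + km : k \geq 0\}$ for a unique $a_j$, for each $0 \leq j < m$. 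Hence $N_0 \setminus (N_0 + m) = \{a_0, \ldots, a_{m-1}\}$ has exactly $m$ elements, one per residue class mod $m$.

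For condition (3), writing $a_j = j + k_j m$ with $k_j \in \Z$, a residue-by-residue count shows that $|(\Z_{\geq 0} \setminus N_0) \cap (j + m\Z)| = \max(k_j, 0)$ and $|(N_0 \setminus \Z_{\geq 0}) \cap (j + m\Z)| = \max(-k_j, 0)$, so hypothesis (b) becomes $\sum_{j=0}^{m-1} k_j = 0$. Consequently
\[
\sum_{j=0}^{m-1} a_j = \sum_{j=0}^{m-1} j + m \sum_{j=0}^{m-1} k_j = \binom{m}{2}.
\]
Finally, $n$-stability holds because each $N_i$ is closed under $+n$: if $x \in N_i \setminus N_{i+1}$, then $x + n \in N_i$, so $f(x+n) \geq i = f(x)$. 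The proof is essentially routine bookkeeping; the only (minor) obstacle is translating the $\SL$-type condition (b) into the residue-class identity $\sum_j k_j = 0$ needed for condition (3).
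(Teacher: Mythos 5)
Your proof is correct, and it takes a somewhat more explicit, computational route than the paper. The paper handles well-definedness by noting every integer lies in some $N_i$ and the differences $N_i\setminus N_{i+1}$ are pairwise disjoint; it handles condition (2) by a direct contradiction argument (two elements $i<j$ of the same residue cannot both lie in $N_0\setminus N_r$, since $i\in N_0$ forces $j=i+bm\in N_r$); and for condition (3) it simply asserts that ``the sum of elements in $N_0\setminus(m+N_0)$ does not depend on $N_0$'' given normalization (b), then checks the canonical case $N_0=\Z_{\geq0}$.

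You instead introduce the residue-class minima $a_j$ (so that $N_0\cap(j+m\Z)=\{a_j,a_j+m,\dots\}$) and use them uniformly. This gives a cleaner derivation of (2) — exactly one element of $N_0\setminus(N_0+m)$ per residue class — and, more importantly, supplies the missing computation for (3): writing $a_j=j+k_jm$, you show $|(\Z_{\geq0}\setminus N_0)\cap(j+m\Z)|=\max(k_j,0)$ and $|(N_0\setminus\Z_{\geq0})\cap(j+m\Z)|=\max(-k_j,0)$, so (b) is exactly $\sum_j k_j=0$, whence $\sum_j a_j=\binom{m}{2}$. This makes rigorous the paper's terse invariance claim for (3), and the approach is the genuine value added. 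One small remark: your well-definedness argument invokes the lattice bounds \eqref{eqn:bounds}, but the lemma as stated is purely about chains of $(m,n)$-invariant subsets; it is worth noting that the needed boundedness is also built into condition (b), since finiteness of both $|\Z_{\geq0}\setminus N_0|$ and $|N_0\setminus\Z_{\geq0}|$ already forces $N_0$ (hence each $N_i$) to be bounded below and to contain all sufficiently large integers.
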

\begin{proof}
First note that every integer $k$ belongs to an $N_{i}$ for some $i$, and that $(N_{i} \setminus N_{i+1}) \cap (N_{j} \setminus N_{j+1}) = \emptyset$ if $i \neq j$.
 This says that $f: \Z \to \Z$ is indeed well-defined. Now assume $f(x) = i$, so $x \in N_{i} \setminus N_{i+1}$. Since $N_{i+r} = N_{i} + m$,  it follows that $x+m \in N_{i+r} \setminus N_{i+r+1}$, so $f(x+m) = i+r$, as needed. 

Now, by definition, $f^{-1}[0,r-1] = \bigcup_{i= 0}^{r-1} f^{-1}(i) = (N_{0} \setminus N_{1})\sqcup (N_{1} \setminus N_{2}) \sqcup \cdots \sqcup (N_{r-1} \setminus N_{r}) = N_{0} \setminus N_{r}$. Since $|N_{i} \setminus N_{i+1}| = w_{i}$, and $w_{0} + \cdots + w_{r-1} = m$, it follows that $f^{-1}[0,r-1]$ has exactly $m$ elements, and we need to verify that their residues mod $m$ are pairwise distinct. Let $i < j \in \Z$ have the same residue modulo $m$, say $j = i + bm, b >0$. If $i \in N_0$ then, since $N_{0}$ is stable under addition of $m$,
$i + (b-1)m \in N_{r} = N_{0} + m$ and so $j = i+bm \in N_{r}$. Hence we cannot have both $i,j \in N_{0} \setminus N_r $ and the result follows. 

Due to the normalization condition \eqref{item-b}
of $N_0$, the sum of elements in $N_0 \setminus (m+N_0)$ does not depend on $N_0$, and it is easy to see that for $N_0 = \Z_{\geq 0}$ this is $\binom{m}{2}$. We conclude that, indeed, $f \in \affcz{}{(m, r)}$. It is obvious that the weight of $f$ is precisely $\bw$. Finally, if $f(x) = i$ then $x \in N_{i}$, so $x + n \in N_{i}$ as well, from which it follows that $f(x+n) \geq i$. Thus, $f$ is $n$-stable.
\end{proof}

\subsection{Geometric interpretation of standardization}\label{sec:geo standardization} In this section, we provide a geometric perspective of the standardization procedure in Section \ref{sec:standardization}. In order to do this, we state a few remarks first. By Lemma \ref{lem:from chains to comps} we get a map $X^{\bw}_{(m,n)} \to \affcz{\bw}^{n}(m,r)$. As a consequence of the results in Section \ref{sect: fixed points}, the fibers of this map give a paving on $X^{\bw}_{(m,n)}$. For an element $f \in \affc{\bw}^n(m,r)$, let $C_{f} \subseteq X^{\bw}_{(m,n)}$ be its corresponding cell. That is,
\begin{equation}
C_{f} := \{(L_{i})_{i \in \Z} \mid \nu(L_{i}) \setminus \nu(L_{i+1}) = f^{-1}(i)\}.
\end{equation}

Now let $\overline{\bw} := (1, 1, \dots, 1)  = (1^m)$. Note that, for any other composition $\bw$ of $m$ we have a projection:
\begin{equation}
\pi_{\bw}: X^{\overline{\bw}}_{(m,n)} \to X^{\bw}_{(m,n)}
\end{equation}
given by simply forgetting some of the subspaces in the chain. Note that the affine cells in $X^{\overline{\bw}}_{(m,n)}$ are indexed by $n$-stable affine \emph{permutations} or, equivalently, by parking functions. Recall from Section \ref{sec:standardization} that, given a parking function $(\D, \varphi)$ and a weight $\bw$, we have defined the $\bw$-semistandardization $\sstd{\bw}(\D, \varphi)$ of $(\D, \varphi)$. The following proposition is then immediate. 

\begin{proposition}
For a composition $\bw$ of $m$ and an affine composition $f \in \affcz{\bw}(m,r)^{n}$, let $C^{\bw}_{f}$ denote the corresponding affine cell in $X^{\bw}_{(m,n)}$. Then,
\[
\pi_{\bw}^{-1}(C^{\bw}_{f}) = \bigsqcup_{\sstd{\bw}(\sigma) = f}C^{\overline{\bw}}_{\sigma}.
\]
\end{proposition}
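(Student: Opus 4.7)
The plan is to unwind the definitions of the cells and check that the valuation-set labeling commutes with $\pi_{\bw}$. First I would describe $\pi_{\bw}$ explicitly at the level of lattice chains: setting $s_0 := 0$ and $s_i := w_0 + \cdots + w_{i-1}$ for $1 \leq i \leq r$ (so $s_r = m$), the map sends a full flag $(L'_j)_{j \in \Z} \in X^{\overline{\bw}}_{(m,n)}$ to the partial flag $(L_i)_{i \in \Z}$ defined by $L_i := L'_{s_i}$. Thus the combinatorics is entirely controlled by the values $\nu(L'_{s_i})$.

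Next, I would fix $\sigma \in \affcz{\overline{\bw}}^{n}(m,m)$ and a flag $(L'_j) \in C^{\overline{\bw}}_\sigma$, so that $\nu(L'_j) \setminus \nu(L'_{j+1}) = \{\sigma^{-1}(j)\}$ for all $j \in \Z$. Since these consecutive differences are singletons and the valuation sets are nested, a straightforward telescoping gives
\[
\nu(L_i) \setminus \nu(L_{i+1}) \;=\; \nu(L'_{s_i}) \setminus \nu(L'_{s_{i+1}}) \;=\; \bigsqcup_{j=s_i}^{s_{i+1}-1} \{\sigma^{-1}(j)\} \;=\; \sigma^{-1}\{s_i, s_i+1, \dots, s_{i+1}-1\}.
\]
Hence $\pi_{\bw}((L'_j)) \in C^{\bw}_f$ precisely when $f^{-1}(i) = \sigma^{-1}\{s_i, \ldots, s_{i+1}-1\}$ for every $i$, which is by definition the condition $f = f_{\bw}\circ \sigma$, i.e.\ $\sstd{\bw}(\sigma) = f$.

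Finally, since $X^{\overline{\bw}}_{(m,n)} = \bigsqcup_\sigma C^{\overline{\bw}}_\sigma$ is a disjoint set-theoretic decomposition, every point of $\pi_{\bw}^{-1}(C^{\bw}_f)$ lies in exactly one $C^{\overline{\bw}}_\sigma$, and the calculation above forces $\sstd{\bw}(\sigma) = f$ for that $\sigma$. This yields both inclusions simultaneously, and disjointness of the right-hand side is inherited from the paving of $X^{\overline{\bw}}_{(m,n)}$.

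No serious obstacle should arise: the whole argument is a definitional unpacking, whose essential content is the observation that valuation-set labels telescope under the coarsening $\pi_{\bw}$. The only mildly delicate point is matching conventions between the $0$-centric combinatorics used in the Springer fiber section and the original combinatorics used to define $\sstd{\bw}$, namely verifying that $\bw$-semistandardization corresponds, under the bijection $\action_{\bw}$, to precomposition with the step function $f_{\bw}$. Once this identification is in hand, the telescoping calculation above finishes the proof.
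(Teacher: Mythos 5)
Your proposal is correct and is essentially a careful unpacking of what the paper declares ``immediate'' (the paper supplies no explicit proof of this proposition, relying on the preceding Lemma \ref{lem:from chains to comps} and the definition of the cells). The telescoping calculation $\nu(L'_{s_i}) \setminus \nu(L'_{s_{i+1}}) = \bigsqcup_{j=s_i}^{s_{i+1}-1}\{\sigma^{-1}(j)\}$ is exactly the content the paper leaves to the reader, and your reduction to the identity $f = f_{\bw}\circ\sigma$ is the right translation. You are also right to flag the indexing as the one delicate point: the Springer-fiber section uses $0$-centric conventions while $f_{\bw}$ and $\sstd{\bw}$ were introduced in the $1$-indexed combinatorial section, so one should confirm that the $0$-centric step function sends $\{s_i,\dots,s_{i+1}-1\}$ to $i$, which is the straightforward shift of the paper's $f_{\bw}^{-1}(i) = [w_1+\cdots+w_{i-1}+1,\, w_1+\cdots+w_i]$. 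With that reconciliation made, nothing else is missing; in particular, disjointness is indeed inherited from the set-theoretic paving of $X^{\overline{\bw}}_{(m,n)}$ by the cells $C^{\overline{\bw}}_\sigma$.
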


Our goal is to geometrically distinguish the standardization $\std(\D, \Upsilon)$ from those parking functions that $\bw$-semistandardize to $(\D, \Upsilon)$. In order to achieve this, we will use the following lemmas. Throughout, for $(L_{i}) \in X^{\bw}_{(m,n)}$ let $f: \Z \to \Z$ be defined by $f^{-1}(i) = \nu(L_{i})\setminus \nu(L_{i+1})$. \footnote{Note that, technically, $\nu$ is not defined at $0$, so the correct statement ought to be $\nu(L_{i} \setminus \{0\}) \setminus \nu(L_{i+1} \setminus \{0\})$. For ease of notation, we choose to abuse the notation and simply write $\nu(L_{i}) \setminus \nu(L_{i+1})$.}
\begin{lemma}\label{lem:nu}
For each $i$, $\nu(L_{i}) = f^{-1}(\Z_{\geq i})$.
\end{lemma}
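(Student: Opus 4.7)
The plan is to unfold the definition of $f$ and show the two inclusions, exploiting the chain structure $L_i \supseteq L_{i+1}$ together with the periodicity $L_{i+r} = z^{m} L_i$. By construction
\[
f^{-1}(\Z_{\geq i}) \;=\; \bigsqcup_{j \geq i}\bigl(\nu(L_j) \setminus \nu(L_{j+1})\bigr),
\]
so the content of the lemma is that this disjoint union telescopes to $\nu(L_i)$.

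For the easy inclusion $f^{-1}(\Z_{\geq i}) \subseteq \nu(L_i)$: if $f(k) = j \geq i$, then $k \in \nu(L_j)$, and since $L_j \subseteq L_i$ we have $\nu(L_j) \subseteq \nu(L_i)$, so $k \in \nu(L_i)$.

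For the reverse inclusion $\nu(L_i) \subseteq f^{-1}(\Z_{\geq i})$, I would first observe that for any $a \geq 0$ we have $L_{i+ar} = z^{am} L_i$, hence $\nu(L_{i+ar}) = am + \nu(L_i)$. In particular $\min \nu(L_{i+ar}) \to +\infty$ as $a \to \infty$, so for a fixed $k \in \nu(L_i)$ there is some $j' > i$ with $k \notin \nu(L_{j'})$. Combined with the nested containments $\nu(L_i) \supseteq \nu(L_{i+1}) \supseteq \cdots$, the set $\{j \geq i : k \in \nu(L_j)\}$ is a nonempty finite initial segment of $\Z_{\geq i}$, so it has a maximum $j_k \geq i$. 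Then $k \in \nu(L_{j_k}) \setminus \nu(L_{j_k+1}) = f^{-1}(j_k)$, so $f(k) = j_k \geq i$, as required.

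The only subtle point is the well-definedness of $f$ on \emph{all} of $\Z$, i.e.\ that every integer in $\nu(L_i)$ actually drops out at some finite step; this is precisely what the periodicity argument above handles, and it is the only step that uses anything beyond the chain condition. No further obstacles are anticipated, and the argument is parallel to—indeed a direct consequence of—the combinatorial setup already used in Lemma~\ref{lem:from chains to comps}.
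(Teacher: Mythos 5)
Your proof is correct and follows essentially the same route as the paper's: the key step in both is that any fixed integer eventually drops out of $\nu(L_j)$ as $j\to\infty$, which you extract from the periodicity $L_{i+ar}=z^{am}L_i$ together with the bound $L_i\subseteq z^{-N}\C[[z]]$, while the paper packages the identical observation as the claim $\bigcap_j L_j = 0$ before telescoping. The organization differs (elementwise versus an intersection claim), but the substance is the same.
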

\begin{proof}
First, we claim that $\bigcap_{i} L_{i} = 0$. Indeed, assume $g \in \bigcap L_{i}$. Then, $g \in \bigcap_{i} L_{ri} = \bigcap_{i} z^{mi}L_{0}$. Recall that there exists $N > 0$ such that $L_{0} \subseteq z^{-N}\C[[z]]$. So then $\bigcap_{i}z^{mi}L_{0} \subseteq \bigcap_{i} z^{-N + mi}\C[[z]]$ and the latter intersection is clearly zero.
This implies that $\bigcap_{j \geq i} L_{j} = 0$. Now, $\nu(L_{i}) \supseteq \nu(L_{i+1}) \supseteq \cdots$ and, by the statement above, $\bigcap_{j \geq i}\nu(L_{j}) = \emptyset$. Thus, $\nu(L_{i}) = \bigsqcup_{j \geq i} \nu(L_{j}) \setminus \nu(L_{j+1}) = \bigsqcup_{j \geq i} f^{-1}(j) = f^{-1}(\Z_{\geq i})$. 
\end{proof}

\begin{lemma}
Suppose that $\nu(L_{i})$ contains $\{j, j+1, j+2, \dots\}$ for some $j \in \Z$. Then, $L_{i}$ contains $z^{j}\C[[z]]$. 
\end{lemma}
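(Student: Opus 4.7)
The plan is to exploit topological closedness of $L_i$. I would begin by observing that $L_i$ is an $\CO$-lattice: it contains $L_r = \ep L_0$ and is contained in $L_0$, and since $L_0/\ep L_0$ is a finite-dimensional $\C$-vector space, $L_i$ is the preimage in $L_0$ of a $\C$-subspace of $L_0/\ep L_0$. Hence $L_i$ is an open (and therefore closed) additive subgroup of $L_0$ in the $\ep$-adic topology, so $L_i$ is closed in $\C((z))$. Since the $\ep = z^m$-adic and $z$-adic topologies on $\C((z))$ define the same open sets, $L_i$ is $z$-adically closed.

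Given closedness, for each integer $k \geq j$ the hypothesis supplies some $g_k \in L_i$ with $\nu(g_k) = k$; rescaling, I may assume $g_k = z^k + z^{k+1} h_k$ with $h_k \in \C[[z]]$. For an arbitrary $\phi = \sum_{k \geq j} a_k z^k \in z^j \C[[z]]$, I would construct coefficients $c_k \in \C$ by a Gram--Schmidt-style recursion: put $c_j := a_j$ and, having chosen $c_j, \ldots, c_{N-1}$, let $c_N$ be the coefficient of $z^N$ in $\phi - \sum_{k=j}^{N-1} c_k g_k$. The partial sums $\phi_N := \sum_{k=j}^{N} c_k g_k$ then lie in $L_i$ and satisfy $\nu(\phi - \phi_N) \geq N+1$, so $\phi_N \to \phi$ in the $z$-adic topology. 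Closedness of $L_i$ forces the limit $\phi$ to lie in $L_i$, and since $\phi \in z^j\C[[z]]$ was arbitrary, $z^j \C[[z]] \subseteq L_i$.

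The only genuine obstacle is the closedness step, which depends on recognizing $L_i$ as a lattice squeezed between $\ep L_0$ and $L_0$ rather than leaving it as merely a $\C[[z^m, z^n]]$-module; once that is in hand, the rest is a routine successive-approximation argument in a complete module.
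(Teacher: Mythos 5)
Your proof is correct, but it takes a genuinely different route from the paper's. The paper's argument is a downward induction: since $L_{i}$ contains $z^{mk}L_{0}$ for some $k$, and $L_{0}$ (being a lattice) contains some $z^{N'}\C[[z]]$, one has $z^{N}\C[[z]] \subseteq L_{i}$ for $N \gg 0$; then, assuming $N > j$, the hypothesis supplies $g = z^{N-1} + \alpha \in L_{i}$ with $\alpha \in z^{N}\C[[z]] \subseteq L_{i}$, so $z^{N-1} = g - \alpha \in L_{i}$ and hence $z^{N-1}\C[[z]] = \C z^{N-1} + z^{N}\C[[z]] \subseteq L_{i}$; iterate until reaching $j$. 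By contrast, you start from the left end $j, j+1, j+2, \dots$ and build an arbitrary $\phi \in z^{j}\C[[z]]$ as a $z$-adically convergent series of elements of $L_{i}$, invoking closedness of $L_{i}$ to conclude. Both hinge on $L_{i}$ being squeezed between two $\CO$-lattices (the paper uses this to produce the initial inclusion $z^{N}\C[[z]] \subseteq L_{i}$; you use it to establish that $L_{i}$ is open, hence closed), but the paper's downward peeling is shorter and avoids any explicit appeal to topological completeness, whereas your successive-approximation argument is more in the spirit of the standard "topological basis" technique and makes the lattice structure of $L_{i}$ a citable intermediate fact rather than an implicit ingredient. One small point worth making explicit in your write-up: your lattice argument is phrased for $0 \le i \le r$, and for general $i \in \Z$ you should note that $L_{i} = \ep^{k}L_{i_{0}}$ with $0 \le i_{0} < r$, so the conclusion transfers by scaling.
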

\begin{proof}
First, note that since $L_{i}$ contains $L_{rk}$ for some $k$, $L_{i}$ contains $z^{mk}L_{0}$. By our assumptions on $L_{0}$, it follows that $L_{i}$ contains $z^{N}\C[[z]]$ for some $N \gg 0$. If $N \leq j$ there is nothing to do, so we may assume that $N > j$. Then $N -1 \in \{j, j+1, \dots\}$ and so there exists $g = z^{N-1} + \alpha \in L_{i}$, where $\alpha \in  z^{N}\C[[z]]$. Since $z^{N}\C[[z]] \subseteq L_{i}$ it follows that $z^{N-1} \in L_{i}$ and thus $z^{N-1}\C[[z]] \subseteq L_{i}$. The result follows by induction on $N-j$. 
\end{proof}

\begin{corollary}
Assume $\nu(L_{i}) = \{j, j+1, j+2, \dots\}$ for some $j \in \Z$. Then $L_{i} = z^{j}\C[[z]]$. 
\end{corollary}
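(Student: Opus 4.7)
The plan is to prove both inclusions, using the preceding lemma for one direction and a direct valuation argument for the other.

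For the inclusion $z^{j}\C[[z]] \subseteq L_{i}$, I simply invoke the previous lemma: the hypothesis $\nu(L_{i}) = \{j, j+1, j+2, \dots\}$ trivially implies $\nu(L_{i}) \supseteq \{j, j+1, j+2, \dots\}$, so the previous lemma gives $L_{i} \supseteq z^{j}\C[[z]]$.

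For the reverse inclusion $L_{i} \subseteq z^{j}\C[[z]]$, I argue via valuations. Let $g \in L_{i} \setminus \{0\}$. Then $\nu(g) \in \nu(L_{i}) = \{j, j+1, j+2, \dots\}$, so $\nu(g) \geq j$. Writing $g = \sum_{k \geq \nu(g)} g_{k} z^{k}$ and factoring out $z^{j}$ yields $g = z^{j}\bigl(\sum_{k \geq \nu(g) - j} g_{k+j} z^{k}\bigr) \in z^{j}\C[[z]]$, since the exponents in the inner sum are non-negative. The zero element is in $z^j \C[[z]]$ trivially.

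There is no real obstacle here: the corollary is an immediate consequence of the previous lemma together with the definition of the valuation. The only mild subtlety is to note that $\nu$ is not defined at $0 \in L_i$, but since $0 \in z^{j}\C[[z]]$ this is handled separately as above, consistent with the footnote convention used earlier in this subsection.
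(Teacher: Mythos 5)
Your proof is correct and is exactly the argument the paper intends (the corollary is stated without proof, as an immediate consequence of the preceding lemma). The forward inclusion is the lemma verbatim, and the reverse inclusion is the standard valuation observation that an element with $\nu(g)\geq j$ lies in $z^{j}\C[[z]]$.
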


We can now state the main theorem of this section. 

\begin{theorem}\label{thm:geo std}
Let $\sigma$ be such that $\sstd{\bw}(\sigma) = f$. Then, the map
\[
\pi_{\bw}: C_{\sigma}^{\overline{\bw}} \to C_{f}^{\bw}
\]
is injective if and only if $\sigma = \std(f)$. 
\end{theorem}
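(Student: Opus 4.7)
The approach is to reduce to a local uniqueness analysis at each index $i$. Since $\pi_{\bw}$ forgets intermediate subspaces, a point of $\pi_{\bw}^{-1}(L_0 \supseteq \cdots \supseteq L_r) \cap C_{\sigma}^{\overline{\bw}}$ amounts to choosing, for each $i$, a refinement
\[
L_i = M_{i,0} \supseteq M_{i,1} \supseteq \cdots \supseteq M_{i, w_i} = L_{i+1}
\]
by $\C[[z^m,z^n]]$-submodules with $\nu(M_{i,k-1}) \setminus \nu(M_{i,k}) = \{e_{i,k}\}$, where $(e_{i,1}, \ldots, e_{i,w_i})$ is the order in which $\sigma$ peels off the elements of $f^{-1}(i)$. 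Thus $\pi_{\bw}|_{C_{\sigma}^{\overline{\bw}}}$ is injective precisely when each such local refinement is uniquely determined by $(L_i)$.

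I would analyze this via the $\C[[z^m,z^n]]$-module structure on $L_i / L_{i+1}$. Fixing lifts $u_e \in L_i$ of valuation $e$ for each $e \in f^{-1}(i)$, the classes $\bar{u}_e$ form a $\C$-basis, with $z^m \bar{u}_e$ equal to a nonzero multiple of $\bar{u}_{e+m}$ when $e + m \in f^{-1}(i)$ and equal to $0$ when $e + m \in \nu(L_{i+1})$, and analogously for $z^n$. The key classification is the following: submodules $N \subseteq M_{i,k-1}/L_{i+1}$ with $\nu(N) = \{e_{i,k+1}, \ldots, e_{i, w_i}\}$ are unique exactly when $e_{i,k}$ is the minimum of $\{e_{i,k}, \ldots, e_{i,w_i}\}$, and otherwise form a positive-dimensional family.

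For the uniqueness direction (when $e_{i,k}$ is the minimum), any element of valuation $> e_{i,k}$ has zero $\bar{u}_{e_{i,k}}$-coefficient, so $N$ must be contained in, and hence equal to, $\spann\{\bar{u}_e \mid e > e_{i,k}\}$; this is $z^m, z^n$-stable because $\sigma$ being a linear extension of the $(m,n)$-partial order on $f^{-1}(i)$ (necessary for each intermediate $\nu$-set to be $(m,n)$-invariant) rules out $z^m$ or $z^n$ sending any $\bar{u}_e$ with $e > e_{i,k}$ onto $\bar{u}_{e_{i,k}}$. For non-uniqueness, given $e_{i,l} < e_{i,k}$ with $l > k$, one constructs a family via
\[
N_\lambda = \spann\bigl(\{\bar{u}_{e_{i,l}} + \lambda \bar{u}_{e_{i,k}}\} \cup \{\bar{u}_e \mid e \in \{e_{i,k+1}, \ldots, e_{i, w_i}\} \setminus \{e_{i,l}\}\}\bigr),
\]
which has the prescribed $\nu$-set since $e_{i,l} < e_{i,k}$ controls the valuation of the twisted element. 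Iterating, the refinement is uniquely determined for all $i$ iff $(e_{i,1}, \ldots, e_{i,w_i})$ is strictly increasing for each $i$; by the characterization of minimal-length right coset representatives (Remark \ref{rmk:increasing}) together with Corollary \ref{prop:heightn} and the definition of $\std$, this is precisely the condition $\sigma = \std(f)$.

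The main technical obstacle is verifying that the family $N_\lambda$ is genuinely $\C[[z^m,z^n]]$-stable. When $z^m \bar{u}_e$ or $z^m \bar{u}_{e_{i,l}}$ produces a $\bar{u}_{e_{i,k}}$-component (which can occur when $e_{i,l}+m$ or $e+m$ lands on another basis index in a way that propagates the twist), one must twist additional basis vectors by appropriate multiples of $\lambda$ to absorb these contributions. A cleaner formulation likely realizes the moduli space of submodules of $M_{i,k-1}/L_{i+1}$ with a fixed $\nu$-set as an affine space whose dimension equals the number of elements $e' \in f^{-1}(i)$ strictly less than $e_{i,k}$ that are peeled off later than $e_{i,k}$, collapsing to a point precisely when at every stage the minimum of the remaining elements is removed first.
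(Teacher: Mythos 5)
Your overall strategy---reducing to a local uniqueness analysis on each quotient $L_i/L_{i+1}$ and characterizing when the refinement at each step is forced---is a legitimate and somewhat cleaner reformulation of the paper's argument, which instead builds a global topological basis $\{g_{i,j}\}$ of the lattice chain and shows directly that each $M_t$ must equal a specific span. Your uniqueness direction is essentially correct (modulo a small inaccuracy: since the elements of $f^{-1}(i)$ are pairwise distinct mod $m$, $z^m$ actually acts by zero on $L_i/L_{i+1}$, so the only nontrivial constraint is $z^n$-stability). However, the non-uniqueness direction has a genuine gap, which you yourself flag: you do not verify that the family $N_\lambda$ is $z^n$-stable, and as you note, the naive twist can propagate and force you to twist other basis vectors, at which point it is no longer clear the construction gives a well-defined, positive-dimensional family with the prescribed valuation set.

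The paper closes exactly this gap by two choices you omit. First, it works at the unique loop-rotation fixed point $(\bar{L}_i) \in C_f^{\bw}$, where every lattice has a monomial basis, so the module structure really is as simple as you described. Second---and this is the essential missing idea---it does not pick an arbitrary inversion $e_{i,l} < e_{i,k}$ with $l > k$, but the bad pair $(i,j)$ with $|j-i|$ \emph{minimal}. With this choice, the $z^n$-stability of the twisted chain follows from two short contradiction arguments: if $z^n$ sent some intermediate monomial onto $z^{\tilde\sigma^{-1}(j)}$, one would obtain a shorter inversion, violating minimality; and similarly for the image of $z^{\tilde\sigma^{-1}(i)}$. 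Without the minimality assumption, your ``twist additional basis vectors to absorb contributions'' plan is not obviously terminating or well-posed, so as written the $(\Leftarrow)$ direction of the theorem is not established. Incorporating the minimal-gap choice (or some equivalent device that controls where $z^n$ can send the twisted vectors) is necessary to complete the proof.
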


\begin{proof}
Let us, first, set up some conventions for the proof. We extend the weight vector $\bw = (w_0, \dots, w_{r-1}) \in \Z_{\geq 0}^{r}$ to a vector $(w_{i})_{i \in \Z}$ by setting $w_{i} = w_{i+r}$. Note that we get, for every $i \in \Z$, $|f^{-1}(i)| = w_{i}$. For each $i$, let us assume without loss of generality that the set $f^{-1}(i)$ is ordered as $\{\ell_{i, 1} < \cdots < \ell_{i, w_{i}}\}$, and let $\sigma = \std(f)$. Note that $\sigma$ is given as follows:
\[
\begin{array}{rcl}
f^{-1}(0) &=& \{\sigma^{-1}(0) < \sigma^{-1}(1) < \dots < \sigma^{-1}(w_0-1)\}, \\
f^{-1}(1) &= &\{\sigma^{-1}(w_0) < \sigma^{-1}(w_0+1) < \dots < \sigma^{-1}(w_0+w_1-1)\},\\
 &\vdots &  \\
f^{-1}(r-1) & = & \{\sigma^{-1}(w_0+\cdots + w_{r-2}) < \dots < \sigma^{-1}(w_0+\cdots + w_{r-1}-1=m-1)\}.
\end{array}
\]

Let us fix an element $(L_{i}) \in C_{f}^{\bw}$. We will use the notation of the proof of Theorem \ref{thm:geo dinv}, in particular, for $i \in \Z$ such that $f^{-1}(i) = \{\ell_{i,1} < \dots < \ell_{i, w_{i}}\}$ we have unique elements $g_{i,j} \in L_{i}$, $j = 1, \dots, w_{i}$ such that
\[
g_{i,j} = z^{\ell_{i,j}} + \sum_{\substack{p > \ell_{i,j} \\ f(p) < i}}\lambda^{p}_{\ell_{i,j}}z^{p}.
\]

We claim that $L_{i}$ has a topological basis given by
$\{g_{k,j} \mid k \geq i, 1 \leq j \leq w_{k}\}$.
This means that every element $g \in L_{i}$ can be uniquely written as a (possibly infinite) sum
\[
g = \sum_{\substack{k \geq i \\ 1 \leq j \leq w_{k}}}\alpha_{k,j}g_{k,j}, \qquad \alpha_{k,j} \in \C.
\]
Indeed, since $\nu(g) \in \nu(L_{i})$ it follows from Lemma \ref{lem:nu} that there exists $i \leq j$, $j \in \{1, \dots, w_{k}\}$ and a unique element $\alpha_{k,j} \in \C$ such that $\nu(g - \alpha_{k,j}g_{k,j}) > \nu(g)$. Since $g - \alpha_{k,j}g_{k,j} \in L_{i}$, the result now follows by induction after observing that since $L_{i}$ is a lattice, its $z$-adic topology is complete and separated. To conclude, we have
\[
L_{i} = \spann\{g_{i, 1}, \dots, g_{i, w_{i}}, g_{i+1, 1}, \dots, g_{i+1, w_{i+1}}, \dots\},
\]
where, for the remainder of the proof, ``span" refers to topological span. That is, we allow infinite sums that converge in the $z$-adic topology. 

Now, we want to show that there exists at most one element $(M_{t}) \in C^{\overline{\bw}}_{\sigma}$ such that $\pi(M_{t}) = (L_{i})$. The equation $\pi(M_{t}) = (L_{i})$ forces,
\[
\begin{array}{rcl}
M_{0} & = & L_{0} = \spann\{g_{0,1}, \dots, g_{0, w_{0}}, g_{1,1}, \dots, g_{1, w_{1}}, \dots\} \\
M_{w_{0} - 1} & = & L_{1} = \spann\{g_{1,1}, \dots, g_{1,w_{1}}, g_{2,1}, \dots, g_{2,w_{2}}, \dots\}\\
 & \vdots & \\ 
M_{m-1} & = & L_{r-1} = \spann\{g_{r-1, 1}, \dots, g_{r-1, w_{r-1}}, g_{r,1}, \dots, g_{r, w_{r}}, \dots\}.
\end{array}
\]
In particular, it is necessary to prove that there exists a unique way to choose subspaces $M_{1}, M_{2}, \dots$ such that $(M_{t}) \in C^{\overline{\bw}}_{\sigma}$
for $\sigma$ the standardization of $f$.

It suffices to prove that there is a unique choice for $M_{1}$ since the others will follow similarly. Indeed, by Lemma \ref{lem:nu} we must have $\nu(M_{1}) = \sigma^{-1}(\Z_{\geq 1}) = \{\ell_{0,2}, \dots, \ell_{0,w_{0}}, \ell_{1,1}, \dots, \ell_{1, w_{1}}, \dots\}$. We claim that the only choice is to have:
\begin{equation}\label{eq:m1 forced}
M_{1} = \spann\{g_{0,2}, \dots, g_{0, w_{0}}, g_{1,1}, \dots, g_{1,w_{1}}, \dots\}.
\end{equation}
Indeed, since $M_{1} \subseteq M_{0} = L_{0}$, every element of $M_{1}$ has the form $g = \sum_{0 \leq k, 1 \leq j \leq w_{k}}\alpha_{k,j}g_{k,j}$. Assume that there exists $g \in M_{1}$ with $\alpha_{0,1} \neq 0$. Since $M_{w_{0}-1} = L_{1} \subseteq M_{1}$ and $L_{1} = \spann\{g_{1,1}, g_{1,2}, \dots\}$ then $g' = \sum_{j = 1}^{w_{0}}\alpha_{0,j}g_{0,j} \in M_{1}$.  Moreover, because $\alpha_{0,1} \neq 0$, we obtain $\nu(g') = \nu(g_{0,1}) = \sigma^{-1}(0) \not\in \nu(M_1)$, which is a contradiction.  Thus, $M_1 \subseteq \spann\{g_{0,2}, \dots, g_{0,w_{0}}, \dots\}$ and, since $\dim_{\C}(M_{0}/M_{1}) = 1$, we have that $M_1$ is forced to be as in \eqref{eq:m1 forced}. Using a similar strategy, we see that $M_{2}$, $M_{3}, \dots$ are also uniquely defined. Thus, $\pi_{\bw}: C^{\overline{\bw}}_{\sigma} \to C^{\bw}_{f}$ is injective.

Now let $\tilde{\sigma}$ be such that $\sstd{\bw}(\tilde{\sigma}) = f$ but $\tilde{\sigma} \neq \std(f)$. This means that 
\[
\begin{array}{rcl}
f^{-1}(0) &=& \{\tilde{\sigma}^{-1}(0), \tilde{\sigma}^{-1}(1), \dots, \tilde{\sigma}^{-1}(w_0-1)\} \\
f^{-1}(1) &= &\{\tilde{\sigma}^{-1}(w_0), \tilde{\sigma}^{-1}(w_0+1), \dots, \tilde{\sigma}^{-1}(w_0+w_1-1)\}\\
 &\vdots &  \\
f^{-1}(r-1) & = & \{\tilde{\sigma}^{-1}(w_0+\cdots + w_{r-2}), \dots,  \tilde{\sigma}^{-1}(w_0+\cdots + w_{r-1}-1=m-1)\}
\end{array}
\]
but there exist $h \in \{0, \dots, r-1\}$ and $i, j \in \{w_0 + \cdots + w_{h}, \dots, w_{0} + \cdots + w_{h} + w_{h+1}-1\}$ with $i < j$ and $\tilde{\sigma}^{-1}(j) < \tilde{\sigma}^{-1}(i)$. We may and will assume that $|j-i|$ is minimal with this property. Note that $\tilde{\sigma}^{-1}(i) - \tilde{\sigma}^{-1}(j) \neq n$, as $\tilde{\sigma}$ does not have inversions of height $n$.

Let us take the element $(\bar{L}_{i}) \in C^{\bw}_{f}$ defined by
\[
\bar{L}_{i} = \spann\{z^{\ell_{k,j}} \mid k \geq i, 1 \leq j \leq w_{k}\}.
\]
In other words, $(\bar{L}_{i})$ is the unique element of $C^{\bw}_{f}$ that is fixed by loop rotation. We will find a collection of distinct elements $(M^{\alpha}_{t}) \in C^{\overline{\bw}}_{\tilde{\sigma}}$ such that $\pi_{\bw}(M^{\alpha}_{t}) = (\bar{L}_{i})$ for every $\alpha$, which will finish the proof of the theorem. It suffices to define $M_{0}, \dots, M_{m-1}$, since all the other spaces are fixed by the condition $M_{i+m} = z^{m}M_{i}$. As in the previous case, for any $\alpha$ one obtains
\[
M^{\alpha}_{0} = \bar{L}_{0}, \quad M^{\alpha}_{w_{0} - 1} = \bar{L}_{1}, \quad \dots,\quad  M^{\alpha}_{m-1} = \bar{L}_{r-1}.
\]
Now, consider $M^{\alpha}_{t}$ given by
\[
M^{\alpha}_{t} :=
\begin{cases}
\spann\{z^{\tilde{\sigma}^{-1}(s)} \mid s \geq t\}, & t \not\in [i+1, j];\\
\spann\{z^{\tilde{\sigma}^{-1}(t)}, z^{\tilde{\sigma}^{-1}(t+1)}, \dots, z^{\tilde{\sigma}^{-1}(j-1)}, z^{\tilde{\sigma}^{-1}(j)} + \alpha z^{\tilde{\sigma}^{-1}(i)}, z^{\tilde{\sigma}^{-1}(j+1)}, \dots\}, &t \in [i+1, j].
\end{cases}
\]
 It is clear that $M^{\alpha}_{t}$ is closed under multiplication by $z^m$. To check that it is closed under multiplication by $z^n$, note that we only need to check this for $t \in [i+1, n]$.  If $k \in \{i+1, \dots, j-1\}$ then we cannot have $z^{n}z^{\tilde{\sigma}^{-1}(k)} = z^{\tilde{\sigma}^{-1}(j)}$ since this would imply that $\tilde{\sigma}^{-1}(k) < \tilde{\sigma}^{-1}(j) < \tilde{\sigma}^{-1}(i)$ and $|k-i| < |j-i|$, a contradiction. Similarly, we cannot have $z^{n}z^{\tilde{\sigma}^{-1}(i)} = z^{\tilde{\sigma}^{-1}(k)}$ for $k \in \{i+1, \dots, j-1\}$ since this implies that $\tilde{\sigma}^{-1}(j) < \tilde{\sigma}^{-1}(i) < \tilde{\sigma}^{-1}(k)$ and $|k-j| < |i-j|$, again a contradiction. Together, these two conditions imply that $M^{\alpha}_{t}$ is closed under multiplication by $z^{n}$. So we may find an entire affine line of elements in $C^{\overline{\bw}}_{\tilde{\sigma}}$ mapping to the same element of $C^{\bw}_{f}$, as desired.
\end{proof}

\begin{corollary}\label{cor:bound dim}
For $f \in \affc{\bw}^n(m, r)$,
\[
\dim_{\C}(C_f) \geq \codinv(f).
\]
\end{corollary}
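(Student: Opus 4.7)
The plan is to deduce this bound from the main structural input of \cite{GMV} combined with Theorem \ref{thm:geo std}. For the full affine flag variety case, i.e.\ $\bw=\overline{\bw}=(1^m)$, it is shown in \cite{GMV} that the cells $C^{\overline{\bw}}_{\sigma}$ indexed by $n$-stable affine permutations $\sigma \in \affsym{m}{n}$ are affine spaces of dimension precisely $\codinv(\sigma)$. I want to promote this exact equality to a lower bound in the parahoric setting by comparing $C^{\bw}_{f}$ to one of the cells upstairs that projects into it.

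Given $f \in \affc{\bw}^n(m,r)$, the natural candidate is $\sigma := \std(f) = \action_{\bw}^{-1}(f)$, which by Corollary \ref{prop:heightn} is a minimal-length $n$-stable affine permutation whose $\bw$-semistandardization is $f$; in particular $\pi_{\bw}(C^{\overline{\bw}}_{\sigma}) \subseteq C^{\bw}_{f}$. By Theorem \ref{thm:geo std} applied to this specific $\sigma$, the restricted projection
\[
\pi_{\bw}|_{C^{\overline{\bw}}_{\sigma}}\colon C^{\overline{\bw}}_{\sigma} \longrightarrow C^{\bw}_{f}
\]
is injective.

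Since $C^{\overline{\bw}}_{\sigma}$ is an irreducible variety (an affine space) and $\pi_{\bw}$ is a morphism of algebraic varieties, the image $\pi_{\bw}(C^{\overline{\bw}}_{\sigma})$ is a constructible subset of $C^{\bw}_{f}$ of dimension equal to $\dim_{\C}(C^{\overline{\bw}}_{\sigma})$, because an injective morphism of irreducible varieties (in characteristic zero) has zero-dimensional fibers and hence preserves dimension on the image. Therefore
\[
\dim_{\C}(C^{\bw}_{f}) \;\geq\; \dim_{\C}\bigl(\pi_{\bw}(C^{\overline{\bw}}_{\sigma})\bigr) \;=\; \dim_{\C}(C^{\overline{\bw}}_{\sigma}) \;=\; \codinv(\sigma).
\]
Finally, by the very definition of $\codinv$ on (semistandard) parking functions/affine compositions via standardization (Definition \ref{def:dinv} together with \eqref{eq:dinv-codinv}), $\codinv(\sigma) = \codinv(\std(f)) = \codinv(f)$, yielding the claimed bound.

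The step likely to require the most care is the dimension comparison from the injectivity of $\pi_{\bw}$: one must check that this restriction is genuinely a morphism of quasi-projective varieties (which follows from viewing both cells inside the ambient ind-varieties $\Fl^{\overline{\bw}}$ and $\Fl^{\bw}$ where $\pi_{\bw}$ is a partial flag forgetful morphism). Everything else is bookkeeping: identifying $\codinv(f)$ with $\codinv(\std(f))$ on the combinatorial side and invoking the $r=1$ dimension formula of \cite{GMV} on the geometric side. Notably, this argument only gives the inequality; equality, which would recover Theorem \ref{thm:main2}, requires understanding the other cells $C^{\overline{\bw}}_{\tilde{\sigma}}$ with $\sstd{\bw}(\tilde{\sigma})=f$ and showing their images have strictly smaller dimension — an observation already implicit in the second half of the proof of Theorem \ref{thm:geo std}.
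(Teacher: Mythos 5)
Your proof is correct and follows essentially the same route as the paper's: invoke Theorem \ref{thm:geo std} to get an injection $C^{\overline{\bw}}_{\std(f)} \hookrightarrow C^{\bw}_{f}$ (hence a dimension bound), then cite the $r=1$ dimension formula from \cite{GMV,hikita} and the definition $\codinv(f)=\codinv(\std(f))$. The paper's proof is simply a terser version of yours, leaving the injectivity-implies-dimension-bound step implicit; your added discussion of morphisms and constructible images is the correct justification of that step.
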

\begin{proof}
From Theorem \ref{thm:geo std} we get that $\dim_{\C}(C_f) \geq \dim_{\C}(C_{\std(f)})$. By \cite{GMV, hikita}, $\dim_{\C}(C_{\std(f)}) = \codinv(\std(f))$ and, by definition, $\codinv(\std(f)) = \codinv(f)$. 
\end{proof}

\subsection{Dimensions of cells}\label{sec:dimension cells} In this section we prove that the cells $C_{f}$ are affine spaces and compute their dimensions. Namely, we have the following result. 

\begin{theorem}\label{thm:geo dinv}
Let $f \in \affc{\bw}^n(m,r)$ and let $C_{f} \subseteq X^{\bw}_{(m,n)}$ be the cell corresponding to $f$. That is, 
$
C_f = \{(L_{i})_{i \in \Z} \mid \nu(L_{i}) \setminus \nu(L_{i+1}) = f^{-1}(i)\}.
$
Then, $C_f$ is an affine space with dimension given by 
\[
\dim_\C(C_{f})
= \codinv(f).
\]
\end{theorem}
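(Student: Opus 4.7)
The plan is to parametrize $C_f$ explicitly by an affine space whose coordinates can be counted against $\codinv(f)$. Since Corollary \ref{cor:bound dim} already provides $\dim_\C C_f \geq \codinv(f)$, the remaining task is to produce a parametrization with at most $\codinv(f)$ free coordinates; this will simultaneously show that $C_f$ is an honest affine space.

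First I would construct a canonical basis for each $(L_i) \in C_f$. Using $\nu(L_i) = f^{-1}(\Z_{\geq i})$ from Lemma \ref{lem:nu}, a Gaussian elimination argument produces, for each $\ell \in f^{-1}(i)$, a unique element
\[
g_\ell \;=\; z^\ell + \sum_{p > \ell,\ f(p) < i}\lambda^p_\ell\, z^p \;\in\; L_i,
\]
and $\{g_\ell : \ell \in f^{-1}(\Z_{\geq i})\}$ is then a topological basis of $L_i$. Consequently $(L_i)$ is completely determined by the family of scalars $(\lambda^p_\ell)$ indexed by pairs $(\ell,p)$ with $\ell < p$ and $f(p) < f(\ell)$; this is precisely the notation already invoked in the proof of Theorem \ref{thm:geo std}.

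Next I would impose the remaining conditions cutting out $C_f$. The equality $L_{i+r} = z^m L_i$ forces $z^m g_\ell = g_{\ell+m}$, hence the $m$-periodicity $\lambda^{p+m}_{\ell+m} = \lambda^p_\ell$, so only $\ell$ in the window $\{0,\ldots,m-1\}$ need be considered. The inclusion $z^n L_i \subseteq L_i$, which uses the $n$-stability of $f$ (so $\ell+n \in f^{-1}(\Z_{\geq f(\ell)})$), expands $z^n g_\ell$ in the canonical basis and yields triangular equations of the form $\lambda^{p+n}_{\ell+n} = \lambda^p_\ell + (\text{polynomial in earlier }\lambda\text{'s})$, which can be solved recursively. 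The surviving free parameters are therefore indexed by those pairs $(\ell, p)$ in the fundamental window with $f(p)<f(\ell)$ that are not $n$-shifts of smaller such pairs; this exhibits $C_f$ as an affine space.

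The main obstacle is to identify the resulting collection of surviving pairs combinatorially with $\codinv(f)$. For this I would pass through the standardization: by Lemma \ref{lem:inversions} the inversions of $f$ agree with those of $\std(f)$, and the surviving pairs above correspond bijectively to the inversions of $\std(f)$ of height in $\{1,\ldots,n-1\}$ anchored in $\{0,\ldots,m-1\}$, which is precisely $\codinv(\std(f)) = \codinv(f)$ by Definition \ref{def:dinv}. Combined with the lower bound of Corollary \ref{cor:bound dim}, this pins down $\dim_\C C_f = \codinv(f)$ and completes the proof.
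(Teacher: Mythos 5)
Your proposal follows essentially the same route as the paper: construct the canonical (Gaussian-eliminated) basis $g_\ell$ with coefficients $\lambda^p_\ell$ indexed by inversions of $f$, reduce to a fundamental window via $m$-periodicity, use $n$-stability and $z^n L_i\subseteq L_i$ to eliminate the parameters attached to inversions of height $\geq n$, identify the surviving parameters with $\codinv(f)$, and close with Corollary \ref{cor:bound dim}. The only differences are cosmetic: the paper proves the upper bound $\dim_\C C_f \leq \codinv(f)$ first and invokes the corollary at the end, and it spells out the elimination inside $L_{f(\alpha)}$ (ordering $\{\beta_0<\beta_1<\cdots\}$ and killing coefficients one at a time) where you summarize it as a triangular recursion.
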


\begin{proof}
Extend $\bw$ to a sequence $(w_{i})_{i \in \Z}$ by setting $w_{i+r} = w_{i}$ for every $i$. In particular, if $f \in \affc{\bw}^n(m,r)$ then $|f^{-1}(i)| = w_{i}$ for every $i \in \Z$. Recall that $C_{f}$ is the set of chains $L_{0} \supseteq \cdots \supseteq L_{r} = z^{m}L_{0}$ of $\C[[z^{m}, z^{n}]]$-submodules of $\C((z))$, where $\nu(L_{i}) \setminus \nu(L_{i+1}) = f^{-1}(i)$.

Let us take $i \in \Z$ such that $w_{i} \neq 0$, and write  $f^{-1}(i) =
\{\ell_{i, 1} < \ell_{i, 2} < \cdots < \ell_{i, w_{i}}\}$.
Pick $(L_\bullet) \in C_{f}$. 
 Thus, for
each $j = 1, \dots, w_{i}$ we can find $g_{i,j} \in L_{i}$ such that
\[
g_{i, j} = z^{\ell_{i,j}} + \sum_{p > \ell_{i,j}}\lambda^{p}_{\ell_{i,j}}z^{p}
\]
and $\lambda^{p}_{\ell_{i,j}} \in \C$. Note that we may assume that $\lambda^{p}_{\ell_{i,j}} = 0$ if $f(p) \geq f(\ell_{i,j}) = i$. We claim that, under these conditions, the functions $g_{i,j}$ are unique. Indeed, if this is not the case then we would have $\dim_\C(L_{i}/L_{i+1}) > w_{i}$, a contradiction. Thus, we obtain, for each $\alpha \in \Z$, a function $g^{\alpha} \in \C((z))$ such that
\[
g^{\alpha} = z^{\alpha} + \sum_{\substack{\beta > \alpha \\ f(\beta) < f(\alpha)}} \lambda^{\beta}_{\alpha}z^{\beta}.
\]
Since $L_{i+r} = z^{m}L_{i}$ for every $i$, we have that $g^{\alpha + m} = z^{m}g^{\alpha}$. Thus, it is enough to consider $\alpha = 0, \dots, m-1$, from which we see that the dimension
of the cell is at most the number of inversions of $f$. 
Below we  show that 
the parameters coming from inversions of
height $>n$ can all be re-expressed in terms of those of smaller height,
showing that $\dim_{\C}(C_f) \le \codinv(f)$. The result will then follow from this combined with Corollary \ref{cor:bound dim}.

Now, since $z^{n}L_{f(\alpha)} \subseteq L_{f(\alpha)}$ and $f(\alpha + n) \geq f(\alpha)$ we have that $z^{n}g^{\alpha} - g^{\alpha + n} \in L_{f(\alpha)}$. Thus,
\begin{equation}\label{eqn: difference}
\sum_{\beta > \alpha} \lambda^{\beta}_{\alpha}z^{\beta + n} - \sum_{\beta > \alpha + n}\lambda^{\beta}_{\alpha + n}z^{\beta} = \sum_{\beta > \alpha + n}(\lambda^{\beta-n}_{\alpha} - \lambda^{\beta}_{\alpha + n})z^{\beta} \in L_{f(\alpha)}.
\end{equation}
Let $\beta > \alpha + n$. If $f(\beta) \geq f(\alpha)$, then we can eliminate the $z^{\beta}$ term in \eqref{eqn: difference} and we still get an element of $L_{f(\alpha)}$.

If $f(\beta) < f(\alpha)$, then $f(\beta - n) \leq f(\beta) < f(\alpha)$ and $f(\beta) < f(\alpha) \leq f(\alpha+n)$,
so none of the terms $\lambda^{\beta - n}_{\alpha}$ nor $\lambda^{\beta}_{\alpha + n}$ are guaranteed to vanish. However, it will be shown that $\lambda^{\beta - n}_{\alpha} - \lambda_{\alpha + n}^{\beta}$ can be expressed in terms of other coefficients. 

Indeed, since $f(\beta) < f(\alpha)$, there does not exist $h \in L_{f(\alpha)}$ such that $\nu(h) = \beta$. We claim that we can eliminate terms of smaller degree in \eqref{eqn: difference} using elements of $L_{f(\alpha)}$. Indeed, let us order the set:
\[\{\beta \mid \beta > \alpha+n \; \text{and} \; f(\beta) < f(\alpha)\} = \{\beta_0 < \beta_1 < \cdots\}. \]
Note that, if $\alpha + n < \beta < \beta_0$, then $f(\beta) \geq f(\alpha)$, so $g^{\beta} \in L_{f(\beta)} \subseteq L_{f(\alpha)}$ and we can eliminate the term $z^{\beta}$ in \eqref{eqn: difference} while still staying in $L_{f(\alpha)}$. After eliminating terms of lower degree in \eqref{eqn: difference} using elements of $L_{f(\alpha)}$,
 the coefficient of $z^{\beta_{0}}$ has to vanish. Hence, we have an expression of the form
\[
\sum_{\beta > \beta_{0}}(\lambda^{\beta - n}_{\alpha} - \lambda^{\beta}_{\alpha+n} - \tilde{\lambda}^{\beta}_{\alpha})z^{\beta}
\] 
where $\tilde{\lambda}^{\beta}_{\alpha}$ is some expression involving $\lambda^{i}_{j}$ where $i-j < n$. We can now get rid of all terms smaller than $\beta_1$ and so on. Thus, $\lambda^{\beta-n}_{\alpha}$ can be expressed in terms of $\lambda^{\beta}_{\alpha + n}$ and higher order monomials on $\lambda^{a}_{b}$ with $a - b < n$. 

Therefore, the dimension of $C_{f}$ is, at most, the number of pairs $(\alpha, \beta) \in \INV(f)$ such that $(\alpha, \beta + n) \not\in \INV(f)$. These correspond to inversions of $f$ of height less than $n$. Indeed, to $(\alpha, \beta)$ we can associate $(\alpha, \beta - kn)$, where $k$ is maximal so that $\beta - kn > \alpha$.
\end{proof}

\begin{corollary}
The Poincar\'e polynomial of $X^{\bw}_{(m,n)}$ is given by
\[
\sum_{\sigma \in \minlength{\bw}{m}{n}} t^{2\codinv(\sigma)}.
\]
\end{corollary}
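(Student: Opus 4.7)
The plan is to combine Theorem \ref{thm:geo dinv} with the bijection of Corollary \ref{prop:heightn}, invoking a standard principle about affine pavings. First I would observe that, because $X^{\bw}_{(m,n)}$ is a projective variety admitting an affine paving (by Theorem \ref{thm:geo dinv}), its Borel--Moore homology has a basis given by the fundamental classes of the closures of the affine cells, and hence its cohomology is concentrated in even degrees with the Poincar\'e polynomial given by
\[
P(X^{\bw}_{(m,n)}, t) \;=\; \sum_{C} t^{2\dim_\C(C)},
\]
the sum being over the affine cells of the paving.

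Next I would apply Theorem \ref{thm:geo dinv} directly: the cells are $C_f$ for $f \in \affc{\bw}^n(m,r)$ and $\dim_\C(C_f) = \codinv(f)$, yielding
\[
P(X^{\bw}_{(m,n)}, t) \;=\; \sum_{f \in \affc{\bw}^n(m,r)} t^{2\codinv(f)}.
\]
Finally I would re-index using the bijection $\action_{\bw}: \minlength{\bw}{m}{n} \to \affc{\bw}^n(m,r)$, $\sigma \mapsto f_{\bw} \circ \sigma$, of Corollary \ref{prop:heightn}. Lemma \ref{lem:inversions} shows $\INV(\sigma) = \INV(f_{\bw}\circ \sigma)$, and since inversions of height exactly $n$ are absent in both objects (by $n$-stability on one side and the definition of $\minlength{\bw}{m}{n}$ on the other), the $\codinv$ statistic is preserved under $\action_\bw$. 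Substituting produces the claimed formula.

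The argument is essentially assembly of prior results, so there is no substantive obstacle. The only minor point requiring care is the appeal to the standard fact that an affine paving of a complex projective variety gives a Poincar\'e polynomial supported in even degrees with coefficients counting cells by complex dimension; this is routine but should be stated explicitly since the paving was constructed cell-by-cell rather than as a stratification by closed subvarieties, so one either verifies that the cell closures are complex subvarieties (which they are, being affine Springer fiber intersections with Schubert cells) or invokes the standard spectral-sequence argument for a filtration by closed subsets whose successive quotients are disjoint unions of affine spaces.
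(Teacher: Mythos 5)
Your proof is correct and is the intended argument: the paper gives no separate proof of this corollary because it is an immediate assembly of Theorem \ref{thm:geo dinv}, the bijection $\action_{\bw}$ from Corollary \ref{prop:heightn}, and the standard fact that an affine paving of a complex projective variety yields a Poincar\'e polynomial $\sum_C t^{2\dim_\C C}$. Your remark about verifying that the paving has the form required for the spectral-sequence argument is a reasonable technical caveat, but the paper already cites Hikita for the existence of the affine paving of $X^{\bw}_{(m,n)}$, so nothing further is needed.
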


\section{DAHA and a finite shuffle theorem}\label{sec:DAHA}

Theorem \ref{thm:main1} allows us to connect the higher rank Catalan polynomials $C^{(r)}_{(m,n)}(x_1,\dots, x_r;q,t)$ to Cherednik's double affine Hecke algebras (DAHA).  Indeed,  the Hikita polynomial $\hik_{(m,n)}(X;q,t) \in \Sym$ arises under the action of the the elliptic Hall algebra in celebrated Rational Shuffle Theorem \cite{CM,  GorskyNegut,  Mellit-rational}.  Moreover, both  the elliptic Hall algebra and $\Sym$ arise as inverse limits of the spherical subalgebra of the DAHA and of the ring of symmetric polynomials, respectively.  In this section, we combine these results and make the relationship between the DAHA and the higher rank Catalan polynomials precise by giving a finite dimensional analogue of the Rational Shuffle Theorem.

\subsection{The Elliptic Hall Algebra} 

The \emph{elliptic Hall algebra} $\mathcal{E}$ was originally introduced by Burban and Schiffmann as the Hall algebra of the category of coherent sheaves on an elliptic curve \cite{BurbanSchiffmann}. Since its inception, this algebra has had various algebraic, geometric, combinatorial, and even topological incarnations \cite{BGHT-identities, MortonSamuelson, negut, SchiffmannVasserotK}. For current purposes, it suffices to define $\mathcal{E}$ as the $\C(q,t)$-algebra generated by the infinite family $\lbrace P_{(m,n)} \rbrace$ with $(m,n) \in \Z^2 \setminus \lbrace(0,0)\rbrace$ modulo some relations. Since we do not need these relations, we omit the details, but refer the reader to \cite{BurbanSchiffmann, SchiffmannVasserotK} for a complete description of this algebra. 

In this article, we will consider only the (positive part of the) \emph{elliptic Hall algebra} $\EHA$ consisting of $P_{(m,n)}$ with $(m,n) \in \Z_{\geq 0}^2 \setminus \lbrace(0,0)\rbrace$ . It is known, see e.g. \cite[Section 4.4]{Bergeron} that $\EHA$ is generated by the elements $P_{(0,1)}$ and $P_{(1,0)}$.

In \cite{FeiginTsymbaliuk} and \cite{SchiffmannVasserotK}, Feigin and Tsymbaliuk and, independently, Schiffmann and Vasserot construct a \emph{geometric action} of $\EHA$ on the $(\C^{\times})^2$-equivariant $K$-theory of the Hilbert scheme of points on $\C^2$. This action was later made explicit by
Negu\c{t} in \cite{negut}. According to \cite[Theorem 4.2]{GorskyNegut}, this $K$-theory is isomorphic to $\Sym$, with the $K$-theory classes of fixed points corresponding to the modified Macdonald polynomials $\tilde{H}_{\lambda}(X;q,t)$, where $\lambda$ is a partition. In particular, this implies that the $\tilde{H}_{\lambda}(X;q,t)$ arise as common eigenfunctions of certain $\EHA$ operators.  Denote this geometric representation by 
\begin{equation}
 \Geom: \EHA \to \End(\Sym).
\end{equation}

It turns out this geometric action is intimately tied with the Hikita polynomial $\hik_{(m,n)}(X;q,t)$ defined in \eqref{eq:Hikita}. The following result is the celebrated \emph{Rational Shuffle Theorem}, conjectured by Gorsky and
Negu\c{t}
in \cite{GorskyNegut} and proven in this generality by Mellit in \cite{Mellit-rational}, see also \cite{CM}.

\begin{theorem}[Rational Shuffle Theorem]\label{thm:shuffle}
Consider  $1 \in \Sym$. Then, for coprime $m, n \in \Z_{\geq 0}^{2}$:
\[
\Geom(P_{(m,n)})\cdot 1 = \hik_{(m,n)}(X;q,t).
\]
\end{theorem}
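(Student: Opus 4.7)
This is a deep result, and any serious proposal must essentially recover the strategy of Carlsson--Mellit \cite{CM} and its rational extension by Mellit \cite{Mellit-rational}. The plan is therefore to sketch how one would navigate that machinery rather than to supply a genuinely new argument. The first step is to realize $\EHA$ as a subalgebra of Feigin--Odesskii's shuffle algebra, under which each generator $P_{(m,n)}$ with $\gcd(m,n)=1$ corresponds to a specific symmetric Laurent polynomial built from iterated shuffle products of the ``zero-slope'' generators $P_{(0,k)}$. Combined with Negu\c{t}'s formulas \cite{negut}, this converts $\Geom(P_{(m,n)})\cdot 1$ into an explicit expression in terms of creation operators acting on $1 \in \Sym$, whose output can in principle be expanded in the modified Macdonald basis $\tilde{H}_\lambda$.

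The second step is to expand this expression as a weighted sum of LLT polynomials indexed by tuples of ribbons aligned along the line of slope $m/n$. The key algebraic input is the family of operators $d_+$, $d_-$ and $T_k$ of Carlsson--Mellit, together with the five relations they satisfy, which together yield a recursion that peels off cells of the underlying $(m,n)$-lattice one at a time. In the rational case, one has to carry this recursion along arbitrary coprime slopes, and verify that the recursion closes; this is Mellit's central technical innovation and is where the hardest combinatorial work lies.

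The third and final step is to match the resulting LLT sum with $\hik_{(m,n)}(X;q,t)$ as defined in \eqref{eq:Hikita}. Using the standard quasisymmetric expansion of LLT polynomials one arrives at a sum indexed by certain combinatorial gadgets (words, tuples of reading words of ribbon tableaux), and the bijection $\cA:\park{m,n}\to \AS_m^n$ of Section \ref{sec:parking} lets one reindex this sum over $(m,n)$-parking functions. What remains is to check that under this reindexing, the ribbon-theoretic statistics match $\area$ and $\dinv$, and that the Gessel quasisymmetric function $Q_{\des(\omega^{-1})}$ appears with the correct descent set.

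The main obstacle, by a wide margin, is the second step: extending the Carlsson--Mellit identities to arbitrary coprime slopes and verifying that the inductive unwinding of $\Geom(P_{(m,n)})$ terminates in a clean LLT expansion. Steps one and three are essentially translations between formalisms, but step two is where genuinely new combinatorial identities are required, and this is precisely the content of \cite{Mellit-rational}. For this reason, we would simply invoke Theorem \ref{thm:shuffle} as a black box in the sequel and use it to bootstrap our finite rational shuffle theorem (Theorem \ref{thm:main3}) via the Schiffmann--Vasserot inverse limit $\EHA = \varprojlim_r \DAHA{r}$.
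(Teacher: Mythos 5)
The paper does not prove this theorem; it states it with attribution to Gorsky--Negu\c{t} (conjecture) and Mellit (proof), citing \cite{GorskyNegut} and \cite{Mellit-rational}, and then uses it as an input to Theorem \ref{thm:FiniteShuffle}. Your proposal correctly recognizes this and ultimately reaches the same conclusion---cite the result as a black box---while also giving an accurate high-level sketch of the Carlsson--Mellit/Mellit machinery (shuffle algebra realization, the $d_\pm$ recursion, LLT-to-quasisymmetric matching), which is a reasonable summary of the external proof even though none of it is carried out here.
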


\subsection{Spherical DAHA} \label{subsec:SphDaha} One of the incarnations of the elliptic Hall algebra $\EHA$ is that it appears as an inverse limit of spherical double affine Hecke algebras \cite{SchiffmannVasserotK},
\begin{equation}\label{eq:eha limit}
\EHA \cong \varprojlim_{r}\DAHA{r}.
\end{equation}
Let us recall that $\DAHA{r}$ is the subalgebra of the spherical subalgebra of the type $\mathfrak{gl}(r)$ double affine Hecke algebra (DAHA), generated by elements $P^{(r)}_{(m,n)}$ for $(m, n) \in \Z^{2}_{\geq 0}\setminus \lbrace(0,0)\rbrace$.
As with the elliptic Hall algebra, we omit the details of the relations and refer the reader to \cite{SchiffmannVasserotMac}. 

The algebras $\DAHA{r}$ admit natural surjections $\DAHA{r} \to \DAHA{r-1}$ sending $P^{(r)}_{(m,n)} \mapsto P^{(r-1)}_{(m,n)}$ that give rise to the algebra morphisms $\pi_r: \EHA \twoheadrightarrow \DAHA{r}$ in \eqref{eq:eha limit}, under which 
\[
P_{(m,n)} = \varprojlim_{r} P_{(m,n)}^{(r)}.
\]

In addition, each $\DAHA{r}$ comes equipped with a faithful \emph{polynomial representation} on the algebra of symmetric polynomials $\Symr$, denoted by 
\begin{equation} \pol{r}: \DAHA{r} \to \End(\Symr). \end{equation} 

It is well known that $\Sym$ arises as an inverse limit of $\Symr$. Hence, there exist morphisms that can be defined on power sums $p_k$ and extended linearly,
\begin{align}
\xi_r: \Lambda \to \Lambda_r & \qquad ; \qquad  p_k(X) \mapsto p_k(x_1,\dots,x_r) \label{eq:Symlimit} \\
\overline{\xi_r}: \Lambda_r \to \Lambda & \qquad ; \qquad p_k(x_1,\dots, x_r) \mapsto p_k(X)  \;\;; 0 \leq k\leq r. \label{eq:Symlimit2}
\end{align}
We note that while $\xi_r \circ \overline{\xi_r}$ is the identity map on $\Symr$, the reverse composition $\overline{\xi_r} \circ \xi_r$ has a nontrivial kernel and is not the identity on all of $\Sym$. In Lemma \ref{lem:commuting} below we resolve this discrepancy by restricting to a particular subalgebra of $\Lambda$. 

It is a beautiful theorem of Schiffmann-Vasserot \cite[Proposition 1.4]{SchiffmannVasserotK} (see also \cite[Section 3.2]{GorskyNegut}) that the inverse limits in  \eqref{eq:eha limit} and \eqref{eq:Symlimit} are compatible and give rise to a action of $\EHA$ on $\Lambda$ that is obtained as the inverse limit of $\pol{r}$. We call this the \emph{polynomial representation} of $\EHA$ and denote it by:
\begin{equation}
 \Pol: \EHA \to \End(\Sym).
\end{equation}

\begin{proposition}[\cite{FeiginTsymbaliuk, GorskyNegut, SchiffmannVasserotK}]\label{prop:iso reps}
The representations $\Pol$ and $\Geom$ of $\EHA$ on $\Sym$ are isomorphic. That is, for all $P_\bx \in \EHA$ we have $\Phi \circ \Pol(P_\bx)  = \Geom(P_\bx)\circ  \Phi $ where $\Phi: \Sym \to \Sym$ is the following plethystic substitution:
\[
 f(X) \mapsto f\left[\frac{X}{1-t^{-1}}; q,t\right].
 \]
\end{proposition}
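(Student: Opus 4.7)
The plan is to reduce the intertwining to a check on a generating set of $\EHA$, then conclude from the eigenbasis description of each representation. Recall that $\EHA$ is generated by $P_{(1,0)}$ and $P_{(0,1)}$. The set of $P\in\EHA$ for which $\Phi\circ\Pol(P) = \Geom(P)\circ\Phi$ holds is visibly closed under sums and products (since $\Phi$ is invertible), so it suffices to establish the identity for $P\in\{P_{(1,0)},P_{(0,1)}\}$ and then extend multiplicatively to all of $\EHA$.

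For $P_{(0,1)}$, I would recall that $\Pol(P_{(0,1)})$ acts, up to a scalar normalization, as Macdonald's first difference operator $D_1$ on $\Sym$, diagonalized by the integral-form Macdonald polynomials $J_\lambda(X;q,t)$, while by \cite{FeiginTsymbaliuk, SchiffmannVasserotK} the operator $\Geom(P_{(0,1)})$ is diagonalized by the modified Macdonald polynomials $\tilde H_\lambda(X;q,t)$, with eigenvalues related to the Macdonald eigenvalues by the same plethystic substitution. The classical identity
\[
\tilde H_\mu[X;q,t] \;=\; t^{n(\mu)}\, J_\mu\!\left[\tfrac{X}{1-t^{-1}};\, q,t\right]
\]
shows that $\Phi$ is, up to a scalar on each isotypic component, the change-of-basis map between the two eigenbases; matching eigenvalues then yields the intertwining on $P_{(0,1)}$, and more generally on the commutative subalgebra generated by the $P_{(0,k)}$. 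For $P_{(1,0)}$, both $\Pol(P_{(1,0)})$ and $\Geom(P_{(1,0)})$ act as scalar multiples of multiplication by $p_1$, so the intertwining reduces to the elementary plethystic identity $p_1[X/(1-t^{-1})] = p_1(X)/(1-t^{-1})$ together with matching of scalar normalizations.

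The main obstacle will be bookkeeping the several closely related conventions in use for the modified Macdonald polynomials and for the generators $P_{(m,n)}$ (cf.\ \cite{FeiginTsymbaliuk, SchiffmannVasserotK, GorskyNegut, negut}), each of which produces a plethysm differing from the others by twists of the form $X \mapsto X/(1-q^{-1})$ or $X \mapsto -X$ and by monomial factors in $q,t$. Rather than recomputing these normalizations from scratch, I would adopt the conventions of \cite[Section 3.2]{GorskyNegut}, where the plethysm $\Phi$ in the statement is shown to intertwine the two actions by direct comparison with Negu\c{t}'s explicit formulas \cite{negut} for the geometric action of $P_{(m,n)}$; the argument above then amounts to verifying that these identifications are compatible with the inverse limit \eqref{eq:eha limit} and the corresponding inverse limit of polynomial representations.
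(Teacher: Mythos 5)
The paper does not actually prove this proposition: it is cited verbatim from \cite{FeiginTsymbaliuk, GorskyNegut, SchiffmannVasserotK}, with only a one-line remark afterward that under the plethysm $\Phi$ the Macdonald eigenfunctions $P_\lambda$ of $P_{(0,\ell)}$ in $\Pol$ are carried to the modified Macdonald polynomials $\tilde H_\lambda$. So there is no internal proof to compare against; you are filling in a citation. Your sketch is a reasonable reconstruction of how the cited sources establish the isomorphism (reduce to the generators $P_{(1,0)}, P_{(0,1)}$, then match eigenbases via a plethystic identity), and you are right that convention-chasing is the real content.

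A few points worth tightening, though. First, your parenthetical ``since $\Phi$ is invertible'' is not what makes the intertwined set closed under sums and products --- linearity gives sums, and composition of intertwiners gives products, with no invertibility needed; invertibility only matters if you want to pass to $\Phi^{-1}\Geom(P)\Phi = \Pol(P)$. Second, the plethystic identity you quote, $\tilde H_\mu[X;q,t] = t^{n(\mu)} J_\mu\bigl[X/(1-t^{-1});q,t\bigr]$, omits the $t\mapsto t^{-1}$ substitution that appears in the standard normalization $\tilde H_\mu(X;q,t) = t^{n(\mu)} J_\mu\bigl[X/(1-t^{-1});q,t^{-1}\bigr]$; since the paper's $\Phi$ explicitly does \emph{not} invert $t$, this is not a cosmetic mismatch but precisely the kind of normalization discrepancy you would need to reconcile against the specific conventions of \cite{GorskyNegut} and \cite{SchiffmannVasserotK} (also, the paper says the $\Pol$-eigenfunctions are $P_\lambda$, not $J_\lambda$, which differ by the scalar $c_\lambda$, harmless for eigenvalue matching but worth stating). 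Third, for $P_{(1,0)}$ the scalar is not optional bookkeeping: since $\Phi(p_1 f) = \tfrac{1}{1-t^{-1}} p_1\,\Phi(f)$, the intertwining forces $\Geom(P_{(1,0)})$ and $\Pol(P_{(1,0)})$ to be multiplication by $p_1$ scaled by exactly $\tfrac{1}{1-t^{-1}}$ relative to each other, and this must be checked against Negu\c{t}'s formula rather than waved at. None of this is a gap in strategy --- it matches what a full proof would do --- but if you intend to present this as a self-contained argument rather than a citation, these are the places where it would currently fall short.
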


We remark that by \cite[Proposition 1.4]{SchiffmannVasserotK} the eigenfunctions of the operators $P_{0,\ell} \in \EHA$ are the Macdonald polynomials $P_\lambda(X;q,t)$, which indeed are mapped to $\tilde{H}_{\lambda}(X;q,t)$ under the plethysm in Proposition \ref{prop:iso reps}.

\subsection{A Finite Rational Shuffle Theorem}
In order to construct a finite rational shuffle theorem for $\DAHA{r}$, our first goal is to transport the geometric representation of $\EHA$ to $\Symr$. This follows easily from composition with the inverse limit maps in \eqref{eq:Symlimit}
and \eqref{eq:Symlimit2}.

\begin{proposition} \label{prop:EHA-restricted action}
Consider the representation $\Psi_r^{G}: \EHA \to \End(\Symr)$ given by 
\[
\Psi_r^{G}(P_\bx):= \xi_r \circ \Geom(P_\bx) \circ \overline{\xi}_r
\]
 for each $P_\bx \in \EHA$. Then, for every pair of coprime integers $m,n \in \Z_{\geq0}$, 
\[
\Psi_r^{G}(P_{(m,n)}) \cdot 1 = C^{(r)}_{(m,n)}( x_1, \dots, x_r; q,t).
\]
\end{proposition}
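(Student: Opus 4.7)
The plan is to reduce this proposition directly to the Rational Shuffle Theorem (Theorem~\ref{thm:shuffle}) combined with Theorem~\ref{thm:Cat=Hik}. The three maps $\xi_r$, $\overline{\xi}_r$, and $\Geom$ have been set up with precisely this compatibility in mind, so the proof should be essentially bookkeeping: the substantive combinatorial work has already been carried out in Theorem~\ref{thm:main1}.

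First I would unwind the definition:
\[
\Psi_r^G(P_{(m,n)}) \cdot 1 \;=\; \xi_r\bigl(\Geom(P_{(m,n)}) \cdot \overline{\xi}_r(1)\bigr).
\]
Since $\overline{\xi}_r$ is an algebra homomorphism, $\overline{\xi}_r(1) = 1$, so the inner term collapses to $\Geom(P_{(m,n)}) \cdot 1$. Applying the Rational Shuffle Theorem then substitutes $\hik_{(m,n)}(X;q,t)$, reducing everything to computing $\xi_r(\hik_{(m,n)}(X;q,t))$.

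The next step is to identify $\xi_r \colon \Sym \to \Symr$ with the specialization $x_{r+1} = x_{r+2} = \cdots = 0$. Both are algebra homomorphisms and both send $p_k(X)$ to $p_k(x_1, \dots, x_r)$, since truncating $p_k(X) = \sum_{i\geq 1} x_i^k$ yields exactly $x_1^k + \cdots + x_r^k$. As the power sums generate $\Sym$ algebraically, the two maps coincide. Hence
\[
\xi_r\bigl(\hik_{(m,n)}(X;q,t)\bigr) \;=\; \hik_{(m,n)}^{(r)}(x_1, \dots, x_r; q, t)
\]
by the definition \eqref{eq:truncatedHik} of the truncated Hikita polynomial. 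Applying Theorem~\ref{thm:Cat=Hik} immediately rewrites the right-hand side as $C^{(r)}_{(m,n)}(x_1, \dots, x_r; q, t)$, completing the argument. The only conceptual obstacle — the identification of the truncated Hikita polynomial with the higher rank Catalan polynomial — has already been resolved in Theorem~\ref{thm:main1}, so no further difficulty is expected here.
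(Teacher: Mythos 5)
Your proof is correct and follows exactly the same route as the paper: evaluate $\overline{\xi}_r(1) = 1$, invoke the Rational Shuffle Theorem to produce $\hik_{(m,n)}(X;q,t)$, then apply $\xi_r$ and Theorem~\ref{thm:Cat=Hik}. The only difference is that you spell out why $\xi_r$ agrees with the truncation $x_{r+1} = x_{r+2} = \cdots = 0$, a point the paper leaves implicit.
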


\begin{proof}
The proof is immediate from Theorems \ref{thm:shuffle} and  \ref{thm:Cat=Hik}, since by definition \[\Psi_r^{G}(P_{(m,n)}) \cdot 1 = \xi_r \circ \Geom(P_{(m,n)}) \circ \overline{\xi}_r (1)\] where  $\overline{\xi}_r (1)=1$  and $\xi_r \left(\Hik_{(m,n)}(X;q,t)\right)= \Hik_{(m,n)}(x_1,\dots,x_r ;q,t)= C^{(r)}_{(m,n)}( x_1, \dots, x_r; q,t)$.
\end{proof}

Our second goal is to prove $\Psi_r^{G}$ is isomorphic to the analogous ``truncated polynomial representation" of $\EHA$ on $\Symr$. To do this we must define an analogue of the plethysm $\Phi$ in Proposition \ref{prop:iso reps} for symmetric polynomials. Of course, plethysm is generally defined only at the level of functions, where an infinite number of variables are available. Nonetheless, utilizing the fact that power sum symmetric polynomials $p_1, \dots, p_r$ generate $\Symr$ we consider the following map. 

Define $\overline{\Phi}: \Symr \to \Symr$ by:
\begin{equation}\label{eq:phibar}
\overline{\Phi}: p_k(x_1,\dots,x_r) \mapsto \frac{1}{1-t^{-k}}\; p_k(x_1,\dots,x_r) ; \qquad  0 \leq k \leq r.
\end{equation}
Extending linearly, it is straightforward to see that $\overline{\Phi}$ induces an algebra isomorphism on $\Symr$. 

\begin{lemma}\label{lem:commuting}
Consider the subalgebra $\Sym^{(r)} \subset \Sym$ generated by $p_1(X), \dots , p_r(X)$. Then the following diagrams commute:
\[
\begin{tikzcd}
\Sym^{(r)} \arrow[r, "\Phi"] \arrow[d, "\xi_r"] &\Sym^{(r)}\arrow[d, "\xi_r"] \\
\Symr\arrow[r, "\overline{\Phi}"] & \Symr
\end{tikzcd}
\qquad\qquad
\begin{tikzcd}
\Symr \arrow[r, "\overline{\Phi}"] \arrow[d, "\overline{\xi_r}"] &\Symr\arrow[d, "\overline{\xi_r}"] \\
\Sym\arrow[r, "{\Phi}"] & \Sym
\end{tikzcd}.
\]
That is,  $\xi_r \circ \Phi  = \overline{\Phi} \circ \xi_r$ on $\Sym^{(r)}$ and $\overline{\xi_r} \circ \overline{\Phi} = \Phi \circ \overline{\xi_r}$ on $\Symr$, respectively. In particular, $\overline{\xi_r} \circ \xi_r$ is the identity on $\Sym^{(r)}$. 
\end{lemma}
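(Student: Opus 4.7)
The plan is to verify each diagram by checking commutativity on a generating set, using that all four maps $\xi_r, \overline{\xi_r}, \Phi, \overline{\Phi}$ are $\C(q,t)$-algebra homomorphisms. First, I would recall that plethystic substitution is defined on power sums by $p_k[X/(1-t^{-1})] = p_k(X)/(1-t^{-k})$, so by definition
\[
\Phi(p_k(X)) \;=\; \frac{p_k(X)}{1-t^{-k}} \qquad \text{for every } k \geq 1.
\]
Note the analogous formula $\overline{\Phi}(p_k(x_1,\dots,x_r)) = p_k(x_1,\dots,x_r)/(1-t^{-k})$ is precisely the definition \eqref{eq:phibar}, and $\overline{\Phi}$ extends uniquely to an algebra automorphism of $\Symr$ because $p_1(x_1,\dots,x_r), \dots, p_r(x_1,\dots,x_r)$ are algebraically independent generators of $\Symr$.

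Next, I would check the first diagram on the generators $p_k(X)$ of $\Sym^{(r)}$ for $1 \leq k \leq r$. On one hand,
\[
\xi_r\bigl(\Phi(p_k(X))\bigr) \;=\; \xi_r\!\left(\frac{p_k(X)}{1-t^{-k}}\right) \;=\; \frac{p_k(x_1,\dots,x_r)}{1-t^{-k}},
\]
while on the other hand
\[
\overline{\Phi}\bigl(\xi_r(p_k(X))\bigr) \;=\; \overline{\Phi}(p_k(x_1,\dots,x_r)) \;=\; \frac{p_k(x_1,\dots,x_r)}{1-t^{-k}}.
\]
Both compositions are algebra homomorphisms $\Sym^{(r)} \to \Symr$ which agree on the generators $p_1(X), \dots, p_r(X)$, hence they agree on all of $\Sym^{(r)}$. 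The second diagram is verified in exactly the same way: on the generator $p_k(x_1,\dots,x_r)$ with $1 \leq k \leq r$, both $\overline{\xi_r} \circ \overline{\Phi}$ and $\Phi \circ \overline{\xi_r}$ return $p_k(X)/(1-t^{-k})$, and both are algebra homomorphisms $\Symr \to \Sym$, so they coincide.

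Finally, for the last assertion, observe that $\overline{\xi_r} \circ \xi_r$ sends $p_k(X) \mapsto p_k(x_1,\dots,x_r) \mapsto p_k(X)$ for each $1 \leq k \leq r$, and it is an algebra homomorphism on $\Sym^{(r)}$; since it agrees with the identity on the generating set $\{p_1(X),\dots,p_r(X)\}$, it is the identity on $\Sym^{(r)}$. There is no genuine obstacle here: the content of the statement is that the plethysm $\Phi$ only involves the $p_k$ one power sum at a time, so it commutes with any map that is defined power-sum-wise. The only subtlety is that $\overline{\xi_r} \circ \xi_r$ is \emph{not} the identity on all of $\Sym$ (it kills $p_k(X)$ for $k > r$), which is precisely why we must restrict to $\Sym^{(r)}$ in the first diagram.
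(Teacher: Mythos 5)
Your proof is correct and follows the same approach as the paper, which simply observes that all claims follow from direct verification on the generating sets $\{p_1,\dots,p_r\}$ together with the fact that all four maps are algebra homomorphisms. Your write-up just spells out the power-sum computations that the paper leaves to the reader.
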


\begin{proof}
All claims follow from direct verification on the generating sets $\lbrace p_1, \dots, p_r \rbrace$ and the fact that all maps are algebra homomorphisms. 
\end{proof}

The restriction to the subalgebra $\Sym^{(r)}$ is further motivated by the fact that it is an invariant subspace under the polynomial action of the elliptic Hall algebra.

\begin{lemma} \label{lem:preserve}
The polynomial representation $\Pol$ of $\EHA$ preserves the subalgebra $\Lambda^{(r)}$ of $\Lambda$. 
\end{lemma}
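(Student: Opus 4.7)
The plan is to exploit the fact that $\EHA$ is generated as an algebra by just two elements, $P_{(1,0)}$ and $P_{(0,1)}$ (as noted at the beginning of \S 4.1), thereby reducing the claim to a manageable computation on these two generators. Observe first that the set of linear operators on $\Lambda$ preserving the subalgebra $\Lambda^{(r)}$ is itself a subalgebra of $\End_{\C}(\Lambda)$: it is closed under addition, scalar multiplication, and composition. Since $\Pol$ is an algebra homomorphism, it therefore suffices to show that both $\Pol(P_{(1,0)})$ and $\Pol(P_{(0,1)})$ preserve $\Lambda^{(r)}$.

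Next, I would transfer the question to the geometric side via Proposition \ref{prop:iso reps}, which gives $\Pol = \Phi^{-1}\circ \Geom \circ \Phi$. Since $\Phi$ acts diagonally on the power-sum basis (sending $p_k \mapsto p_k/(1-t^{-k})$, mirroring \eqref{eq:phibar}), both $\Phi$ and $\Phi^{-1}$ preserve $\Lambda^{(r)}$. Consequently, the lemma reduces to showing that $\Geom(P_{(1,0)})$ and $\Geom(P_{(0,1)})$ preserve $\Lambda^{(r)}$.

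For these, I would appeal to the explicit description of the geometric action, see \cite{FeiginTsymbaliuk, negut, SchiffmannVasserotK}. The operator $\Geom(P_{(1,0)})$ acts (up to a nonzero scalar) as multiplication by $p_1(X)$, which manifestly preserves $\Lambda^{(r)}$ since $p_1 \in \Lambda^{(r)}$ and $\Lambda^{(r)}$ is closed under multiplication. The operator $\Geom(P_{(0,1)})$ is a degree-preserving first-order differential operator in the power-sum coordinates, of the form $\sum_{k \geq 1} h_k \cdot k\,\partial/\partial p_k$ with each $h_k$ a homogeneous symmetric function of degree $k$. Applied to any $f \in \Lambda^{(r)} = \C[p_1, \dots, p_r]$, the derivations $\partial/\partial p_k$ for $k > r$ annihilate $f$; the surviving contributions come from $k \leq r$. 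For such $k$, the coefficient $h_k$ has degree $k \leq r$, and the degree-$k$ component of $\Lambda$ is spanned by $\{p^\mu : \mu \vdash k\}$ — all of which have parts at most $k \leq r$ — so $h_k \in \Lambda^{(r)}$. Thus each surviving term $h_k \cdot k\,\partial f/\partial p_k$ lies in $\Lambda^{(r)}$, and so does $\Geom(P_{(0,1)})(f)$.

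The main obstacle will be rigorously establishing the first-order differential form of $\Geom(P_{(0,1)})$ in the power-sum coordinates, together with the homogeneity condition $\deg(h_k)=k$. This structural description is implicit in the shuffle-algebra presentation of $\EHA$ due to Negu\c{t} \cite{negut}, and can alternatively be verified by matching eigenvalues on a basis of (modified) Macdonald polynomials $\{\tilde{H}_\lambda\}$ of $\Lambda$. Once this form is secured, the preservation argument above becomes essentially automatic from the degree bookkeeping.
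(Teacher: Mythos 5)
Your overall framework is sound: reducing to the two generators $P_{(1,0)}$ and $P_{(0,1)}$, and observing that $P_{(1,0)}$ acts by multiplication by (a scalar multiple of) $p_1$, which manifestly preserves $\Lambda^{(r)}$, both track the paper's argument. The detour through $\Geom$ via conjugation by $\Phi$ is unnecessary but harmless, since $\Phi$ scales each $p_k$ and so preserves $\Lambda^{(r)}$.

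The gap is in your treatment of $P_{(0,1)}$. You assert that $\Geom(P_{(0,1)})$ (equivalently $\Pol(P_{(0,1)})$, the Macdonald operator) is a first-order differential operator $\sum_{k\ge 1} h_k\, k\,\partial/\partial p_k$ in power-sum coordinates. This is false, and it cannot be ``verified by matching eigenvalues on the Macdonald basis'' as you suggest: no operator of that form can have all Macdonald polynomials as eigenvectors. Already in degree $2$, such an operator has matrix $\left(\begin{smallmatrix}2c_1 & \ast \\ 0 & \ast \end{smallmatrix}\right)$ in the basis $\{p_1^2,\, p_2\}$, forcing $p_1^2$ to be an eigenvector; but $p_1^2$ is not proportional to $P_{(2)}$ or $P_{(1,1)}$ (nor to $\tilde H_{(2)}$ or $\tilde H_{(1,1)}$) for generic $q,t$. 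Macdonald-type operators are genuinely of higher order --- they are vertex operators, exponentials of quadratic Heisenberg expressions, not first-order derivations --- so the degree-bookkeeping step at the end of your argument collapses.

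The paper avoids this by exhibiting a basis adapted to $\Lambda^{(r)}$: it shows that $\Lambda^{(r)}$ is spanned over $\C(q,t)$ by the Macdonald polynomials $P_\lambda$ with $\ell(\lambda)\le r$. This uses the Schiffmann--Vasserot fact that the map $\Lambda_{r+1}\to\Lambda_r$ sends $\xi_{r+1}(P_\lambda)$ to $\xi_r(P_\lambda)$ if $\ell(\lambda)\le r$ and to zero otherwise, together with Lemma~\ref{lem:commuting}. Once that spanning set is in hand, preservation under $\Pol(P_{(0,1)})$ is immediate because those $P_\lambda$ are its eigenvectors. That spanning statement is the missing ingredient in your write-up; without it, the handling of $P_{(0,1)}$ does not go through.
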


\begin{proof}
By the results of \cite[Section 4.4]{Bergeron}, $\EHA$ is generated by $P_{(0,1)}$ and $P_{(1,0)}$, so it suffices to prove that these two generators preserve $\Lambda^{(r)}$.

From \cite[Prop. 1.4]{SchiffmannVasserotK} we know that $P_{(1,0)}$ acts on $\Lambda$ by multiplication by $p_1(X)$ and that $P_{(0,1)}$ acts by Macdonald's difference operator with eigenfunctions given by $P_\lambda(X;q,t)$. It is obvious that $P_{(1,0)}$ will preserve $\Lambda^{(r)}$ for any $r>0$. To see that this subspace is also invariant under $P_{(0,1)}$ we first claim that $\Lambda^{(r)}$ is spanned over $\C(q,t)$ by the Macdonald polynomials $P_\lambda(X; q, t)$ where $\lambda$ has at most $r$ parts. Indeed, according to \cite[Section 2]{SchiffmannVasserotK}, the map $\Lambda_{r+1} \to \Lambda_r$ sending the variable $x_{r+1}$ to $0$ sends $\xi_{r+1}(P_\lambda(X; q, t))$ to $\xi_{r}(P_\lambda(X; q, t))$ if $\lambda$ has at most $r$ parts; and to zero otherwise. It follows that the map $\overline{\xi_{r}}: \Lambda_r \to \Lambda$ sends $\xi_r(P_{\lambda}(X; q, t))$ to $P_{\lambda}(X; q, t)$ if $\lambda$ has at most $r$ parts; and to zero otherwise. By Lemma \ref{lem:commuting},  since $\xi_{r}|_{\Lambda^{(r)}}$ and $\overline{\xi_{r}}$ are inverse isomorphisms between $\Lambda_r$ and $\Lambda^{(r)}$, the claim follows. Now the fact that $P_{(0,1)}$ preserves $\Lambda^{(r)}$ follows from the fact that Macdonald polynomials are eigenvectors of $P_{(0,1)}$. 
\end{proof}

Analogously to $\Psi_r^{G}$ in Proposition \ref{prop:EHA-restricted action}, let $\Psi_r^{P}$ be the representation of $\EHA$ on $\Symr$ defined on each $P_\bx \in \EHA$ by:
\begin{equation}
\Psi_r^{P}(P_\bx):=\xi_r \circ \Pol(P_\bx) \circ \overline{\xi}_r. 
\end{equation}

\begin{proposition}\label{prop:finisoreps}
The representations $\Psi_r^{P}$ and $\Psi_r^{G}$ of $\EHA$ on $\Symr$ are isomorphic. In particular, for all $P_\bx \in \EHA$ we have $ \overline{\Phi}\circ \Psi_r^P(P_x) = \Psi_r^G(P_x) \circ \overline{\Phi}$, where $\overline{\Phi}$ is as in \eqref{eq:phibar}.
\end{proposition}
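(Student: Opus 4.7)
The plan is to prove the isomorphism by a direct diagram chase, combining Proposition \ref{prop:iso reps} (which intertwines $\Pol$ and $\Geom$ on $\Sym$) with the two commuting squares of Lemma \ref{lem:commuting} (which intertwine $\Phi$ and $\overline{\Phi}$ across the maps $\xi_r$ and $\overline{\xi_r}$). Once the identity $\overline{\Phi}\circ \Psi_r^P(P_\bx) = \Psi_r^G(P_\bx)\circ\overline{\Phi}$ is established, the fact that $\overline{\Phi}$ is an algebra isomorphism of $\Symr$ (immediate from its definition \eqref{eq:phibar} since the $p_k$ are algebraically independent generators and $1 - t^{-k}$ is invertible in $\C(q,t)$) yields an isomorphism of representations.

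Concretely, I would start from the right-hand side and chase:
\begin{align*}
\Psi_r^G(P_\bx)\circ \overline{\Phi}
&= \xi_r \circ \Geom(P_\bx) \circ \overline{\xi_r} \circ \overline{\Phi} \\
&= \xi_r \circ \Geom(P_\bx) \circ \Phi \circ \overline{\xi_r} \\
&= \xi_r \circ \Phi \circ \Pol(P_\bx) \circ \overline{\xi_r} \\
&= \overline{\Phi} \circ \xi_r \circ \Pol(P_\bx) \circ \overline{\xi_r} \\
&= \overline{\Phi} \circ \Psi_r^P(P_\bx),
\end{align*}
where the first equality is the definition of $\Psi_r^G$, the second uses the right-hand square of Lemma \ref{lem:commuting}, the third uses Proposition \ref{prop:iso reps}, the fourth uses the left-hand square of Lemma \ref{lem:commuting}, and the last is the definition of $\Psi_r^P$.

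The one subtle point — and the main thing to verify carefully — is the validity of the penultimate step, since the left-hand square in Lemma \ref{lem:commuting} only asserts the identity $\xi_r \circ \Phi = \overline{\Phi}\circ \xi_r$ on the subalgebra $\Sym^{(r)}$. I therefore need to check that the intermediate element $\Pol(P_\bx)(\overline{\xi_r}(f))$ lies in $\Sym^{(r)}$ for every $f \in \Symr$. This follows from two observations: first, the image of $\overline{\xi_r}$ is contained in $\Sym^{(r)}$ by the definition \eqref{eq:Symlimit2} (it sends generators $p_k(x_1,\dots,x_r)$ with $k\leq r$ to $p_k(X)$); and second, by Lemma \ref{lem:preserve}, the subalgebra $\Sym^{(r)}$ is invariant under $\Pol(P_\bx)$ for every $P_\bx \in \EHA$. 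With these two observations, the chain of equalities is valid on all of $\Symr$, completing the argument. No heavy calculation is required — the work has already been done in establishing Proposition \ref{prop:iso reps}, Lemma \ref{lem:commuting}, and Lemma \ref{lem:preserve}, and the present proposition is simply the formal consequence on finitely many variables.
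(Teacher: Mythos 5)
Your proof is correct and follows essentially the same route as the paper: both apply Proposition \ref{prop:iso reps} together with the two commuting squares of Lemma \ref{lem:commuting}, using $\overline{\xi_r}(\Symr) \subseteq \Sym^{(r)}$ and Lemma \ref{lem:preserve} to ensure the left square is applied only on $\Sym^{(r)}$. The paper writes the chain as a conjugation $\Psi_r^G(P_\bx) = \overline{\Phi}\circ\Psi_r^P(P_\bx)\circ\overline{\Phi}^{-1}$ rather than your formulation $\Psi_r^G(P_\bx)\circ\overline{\Phi} = \overline{\Phi}\circ\Psi_r^P(P_\bx)$, but these are the same argument; if anything, your version is marginally cleaner because it avoids needing the auxiliary observation that $\Phi^{-1}\circ\overline{\xi_r}$ lands in $\Sym^{(r)}$.
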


\begin{proof}
From the definition of $\overline{\xi_r}$, it is easy to see that $\overline{\xi_r}(\Lambda_r) \subseteq \Lambda^{(r)}$.  Therefore, combining Lemmas \ref{lem:commuting} and \ref{lem:preserve} with the fact that $\Phi$ acts by scaling power sums and thus preserves $\Lambda^{(r)}$, we see that on $\Lambda_r$ the following equalities hold:
\begin{align*}
\Psi_r^G(P_x)
&= \xi_r \circ \Geom(P_x) \circ \overline{\xi_r}  \\
&= \xi_r \circ\Phi \circ \Pol(P_x) \circ \Phi^{-1} \circ \overline{\xi_r} \\
&= \overline{\Phi}  \circ \xi_r \circ \Pol(P_x) \circ \overline{\xi_r} \circ  \overline{\Phi}^{-1} \\
&= \overline{\Phi}  \circ \Psi_r^P(P_x) \circ  \overline{\Phi}^{-1}.
\end{align*}
\end{proof}

Finally, we are ready to construct the desired representation. In particular, we will show that the restricted geometric representation $\Psi_r^G$ of $\EHA$ induces a representation of $\DAHA{r}$ on $\Symr$ that is isomorphic to the polynomial representation $\pol{r}$ of $\DAHA{r}$ previously discussed in Section \ref{subsec:SphDaha}. Thus, combining Propositions \ref{prop:EHA-restricted action} and \ref{prop:finisoreps} gives us a finite version of the Rational Shuffle Theorem.

\begin{theorem} \label{thm:FiniteShuffle}
There is an action $\Psi_r^{G}$ of $\DAHA{r}$ on $\Symr$ such that for every pair of coprime integers $(m, n) \in \Z^{2}_{\geq 0} \setminus \lbrace (0,0)\rbrace$ we have:
\begin{equation}\label{eq:finiteshuffle}
P^{(r)}_{(m,n)}\cdot 1 = C^{(r)}_{(m,n)}( x_1, \dots, x_r; q,t).
\end{equation}
This action is isomorphic to, but does not coincide with, the polynomial representation $\pol{r}$. 
\end{theorem}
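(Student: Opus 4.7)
My plan is to build the $\DAHA{r}$-action by showing that the $\EHA$-representation $\Psi_r^{G}$ of Proposition \ref{prop:EHA-restricted action} factors through the projection $\pi_r: \EHA \twoheadrightarrow \DAHA{r}$, and then to identify the resulting quotient representation with $\pol{r}$ up to a non-trivial intertwiner. The key observation is that by \cite{SchiffmannVasserotK}, the polynomial representation $\Pol$ on $\Sym$ is defined precisely as the inverse limit of the polynomial representations $\pol{r}$ of $\DAHA{r}$ on $\Symr$. Concretely, this compatibility says that for every $P \in \EHA$ one has
\[
\xi_r \circ \Pol(P) = \pol{r}(\pi_r(P)) \circ \xi_r
\]
as operators $\Sym \to \Symr$. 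Combining this with Lemma \ref{lem:commuting}, which gives $\xi_r \circ \overline{\xi_r} = \mathrm{id}_{\Symr}$, I would conclude that
\[
\Psi_r^{P}(P) = \xi_r \circ \Pol(P) \circ \overline{\xi_r} = \pol{r}(\pi_r(P)),
\]
so in particular $\Psi_r^P$ factors through $\pi_r$.

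Next, since Proposition \ref{prop:finisoreps} provides an isomorphism $\overline{\Phi}: \Symr \to \Symr$ of $\EHA$-representations between $\Psi_r^P$ and $\Psi_r^G$, the fact that $\Psi_r^P$ factors through $\pi_r$ forces $\Psi_r^G$ to do the same. Hence there is a well-defined algebra homomorphism, which I continue to denote $\Psi_r^G: \DAHA{r} \to \End(\Symr)$, satisfying $\Psi_r^G \circ \pi_r = \Psi_r^G$ (in the $\EHA$-sense). Because $\pi_r$ sends $P_{(m,n)} \mapsto P^{(r)}_{(m,n)}$, the shuffle identity \eqref{eq:finiteshuffle} is then an immediate consequence of Proposition \ref{prop:EHA-restricted action}:
\[
P^{(r)}_{(m,n)} \cdot 1 = \Psi_r^{G}(\pi_r(P_{(m,n)})) \cdot 1 = \Psi_r^G(P_{(m,n)}) \cdot 1 = C^{(r)}_{(m,n)}(x_1,\dots,x_r;q,t).
\]

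For the isomorphism claim, the descent of $\Psi_r^P$ through $\pi_r$ is, by the identification above, exactly $\pol{r}$. So the intertwining $\overline{\Phi} \circ \Psi_r^P = \Psi_r^G \circ \overline{\Phi}$ from Proposition \ref{prop:finisoreps} becomes $\overline{\Phi} \circ \pol{r}(a) = \Psi_r^G(a) \circ \overline{\Phi}$ for every $a \in \DAHA{r}$, exhibiting $\overline{\Phi}$ as an isomorphism between the two $\DAHA{r}$-actions. Finally, to see they do not coincide it suffices to observe that $\overline{\Phi}$, which acts on each power sum $p_k(x_1,\dots,x_r)$ by the non-trivial scalar $(1-t^{-k})^{-1}$, is not the identity, and that the two actions differ by conjugation by this non-central operator; alternatively, one can compute explicitly on a low-degree generator such as $P^{(r)}_{(1,0)}$, since under $\pol{r}$ this acts as multiplication by $p_1$, whereas under $\Psi_r^G$ it acts, after conjugation by $\overline{\Phi}$, as multiplication by $\overline{\Phi}(p_1) = p_1/(1-t^{-1})$.

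The only step requiring genuine care is the identification $\xi_r \circ \Pol(P) = \pol{r}(\pi_r(P)) \circ \xi_r$, and I expect this to be the main obstacle: one must invoke (or reprove) Schiffmann--Vasserot's compatibility between the inverse systems $(\DAHA{r})_r$ and $(\Symr)_r$ precisely enough to ensure that the formula holds on all of $\EHA$, not merely on the distinguished generators $P_{(m,n)}$. Once this compatibility is in hand, the rest of the argument is purely formal manipulation of the commuting squares already established in Lemmas \ref{lem:commuting} and \ref{lem:preserve} and Proposition \ref{prop:finisoreps}.
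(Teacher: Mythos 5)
Your proof is correct and takes essentially the same route as the paper's: both hinge on the Schiffmann--Vasserot identity $\Psi_r^{P} = \pol{r} \circ \pi_r$ and on the intertwiner $\overline{\Phi}$ from Proposition \ref{prop:finisoreps}. The one genuine difference is in how you descend to $\DAHA{r}$. The paper invokes faithfulness of $\pol{r}$ to conclude the \emph{equality} $\ker(\Psi_r^{P}) = \ker(\pi_r)$, from which it identifies $\EHA/\ker(\Psi_r^{G})$ with $\DAHA{r}$; you observe that the \emph{containment} $\ker(\pi_r) \subseteq \ker(\Psi_r^{P})$, which follows immediately from $\Psi_r^{P}=\pol{r}\circ\pi_r$ without invoking faithfulness, already guarantees the factorization of $\Psi_r^P$ (and hence of the isomorphic $\Psi_r^{G}$) through $\pi_r$. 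Your version is a touch more economical; the paper's kernel equality buys the additional (unstated) fact that the resulting $\DAHA{r}$-representation is faithful. You also supply a concrete witness ($P^{(r)}_{(1,0)}$ acts by $p_1$ under $\pol{r}$ but by $p_1/(1-t^{-1})$ after conjugation by $\overline{\Phi}$) for the claim that the two actions do not coincide, which the paper merely asserts. Two small slips worth flagging: the identity $\xi_r\circ\overline{\xi_r}=\mathrm{id}_{\Symr}$ is noted in the text preceding Lemma \ref{lem:commuting}, not in the lemma itself (which concerns $\overline{\xi_r}\circ\xi_r$ on $\Sym^{(r)}$); and you assert the compatibility $\xi_r\circ\Pol(P)=\pol{r}(\pi_r(P))\circ\xi_r$ on all of $\Sym$, which is slightly stronger than what the paper cites ($\Psi_r^P=\pol{r}\circ\pi_r$, i.e.\ the identity only after precomposing with $\overline{\xi_r}$) --- the stronger form is what ``compatible inverse limits'' really means, and you correctly identify this as the one step that requires care to extract from \cite{SchiffmannVasserotK}.
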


\begin{proof}
Denote by $\pi_r: \EHA \twoheadrightarrow \DAHA{r}$ the surjective maps that give rise to the inverse limit. Then 
by \cite[Prop. 1.4]{SchiffmannVasserotK} we know that $\Psi_r^{P} = \pol{r} \circ \pi_r$, thus we have equivalent sequences,
\[ \left( \ker(\Psi_r^{P}) \hookrightarrow \EHA \overset{\Psi_r^{P}}{\longrightarrow 
} \End(\Lambda_r) \right) = 
\left( \ker(\Psi_r^{P}) \hookrightarrow \EHA  \overset{\pi_r}{\twoheadrightarrow} \DAHA{r} \overset{\Pol_r}{\longrightarrow} \End(\Lambda_r)\right) .\]
Since $\Pol_r$ is faithful, it follows that $\ker(\Psi_r^{P}) = \ker(\Pol_r \circ \pi_r) = \ker(\pi_r)$, hence
\[\ker(\Psi_r^{P}) \hookrightarrow \EHA  \overset{\pi_r}{\twoheadrightarrow} \DAHA{r} \]
 is exact. Thus, $\EHA / \ker(\Psi_r^{P}) \cong \DAHA{r}$. By Proposition \ref{prop:finisoreps}, $\Psi_r^P$ and $\Psi_r^G$ are isomorphic, hence have equal kernels and so $\EHA / \ker(\Psi_r^G) \cong \DAHA{r}$. 
 
 Thus, under the isomorphism $\EHA / \ker(\Psi_r^{G}) \cong \DAHA{r}$ the $\EHA$ representation $\Psi_r^{G}$ induces an action of $\DAHA{r}$ on $\Symr$ where $P_{(m,n)}^{(r)}$ acts by $\Psi_r^G([P_{(m,n)}])$. Here $[P_{(m,n)}]$ denotes the class of $P_{(m,n)}$ in $\EHA / \ker(\Psi_r^G)$. Hence, the action of $P^{(r)}_{(m,n)}$ is given by $\Psi_r^G(P_{(m,n)})$, so by Proposition \ref{prop:EHA-restricted action} equation \eqref{eq:finiteshuffle} follows. 

Now, by construction, the $\DAHA{r}$ representation that $\Psi_r^{P}$ induces is precisely the polynomial representation $\pol{r}$. Consequently, since the isomorphism in Proposition \ref{prop:finisoreps} passes through the quotient, then the action of $\DAHA{r}$ induced by $\Psi_r^{G}$ is clearly isomorphic to its classical polynomial representation $\pol{r}$ via the same isomorphism $\overline{\Phi}$. 
\end{proof}

Let us remark that computing explicitly the action of $\DAHA{r}$ on $\C(q,t)[x_1, \dots, x_r]^{S_{r}}$ given by Theorem \ref{thm:FiniteShuffle} is a daunting task. For example, when $r = 1$, the spherical DAHA $\DAHA{1}$ is the algebra of difference operators on the line, that is, $\DAHA{1}$ is generated by elements $X, Y$ with a single relation $YX = qXY$. Moreover, the elements $P_{(m,n)}^{(1)} \in \DAHA{1}$ have the form:
\[
P_{(m,n)}^{(1)} = q^{\alpha(m,n)}Y^{n}X^{m}
\] 
for some integer $\alpha(m,n)$. 

According to Theorem \ref{thm:FiniteShuffle}, the algebra $\DAHA{1}$ acts on $\C(q,t)[x]$ in such a way that, for each pair of coprime positive integers $(m,n)$:
\[
P_{(m,n)}\cdot 1 = C_{(m,n)}(q,t)x^{m}
\]
where $C_{(m,n)}(q,t)$ is the usual $(q,t)$-Catalan number. The algebra $\DAHA{1}$ and the elements $P_{(m,n)}^{(1)}$ are quite simple while the $(q,t)$-Catalan numbers are quite complicated. Thus, the action of $\DAHA{1}$ on $\C(q,t)[x]$ is bound to be complicated. 

\section{The non-coprime case}\label{sec:noncoprime}

The purpose of this section is to find a combinatorial formula for the rank $r$ $(m,n)$-rational Catalan numbers $C^{(r)}_{(m,n)}$ in the case when $m$ and $n$ are \emph{not coprime}. This is in the spirit of, and heavily inspired by, Bizley's formula for $C_{(m,n)}$ in the noncoprime case \cite{bizley}. 

\subsection{Maxima} Let us fix $m, n, r > 0$ (with $m$, $n$ not necessarily coprime) and recall from the proof of Theorem \ref{thm:coprime sspf} the set
\begin{equation}
\Z_{\geq 0}^{n \times r}(m) = \left\{(a_{11}, \dots, a_{1r}| \dots| a_{n1}, \dots, a_{nr}) \in \Z^{nr}_{\geq 0} \mid \sum_{a_{ij}} = m\right\}.
\end{equation}
Recall also that $a_{i} := \sum_{j = 1}^{r}a_{ij}$. Let us say that the element $k \in \{1, \dots, n\}$ is a \emph{maximum} of $(a_{ij})$ if it maximizes the quantity
\begin{equation}
t_{k}(a_{ij}) := \frac{mk}{n} - a_{1} - \cdots - a_{k}.
\end{equation}
Note that $t_{k}(a_{ij}) + m$ is the $y$-intercept of the line with slope $-m/n$ and passing through the point $(k, m-a_{1} - \cdots - a_{k})$. It follows that, upon the identification $\Z_{\geq 0}^{n \times r}(m)$ with $P^{-}_{r}(m,n)$ defined in the proof of Theorem \ref{thm:coprime sspf}, the tuple $(a_{ij})$ defines a semistandard parking function if and only if $n$ is a maximum of $(a_{ij})$, in other words, if and only if $t_{k}(a_{ij}) \leq 0$ for every $k$. Let us now study the behavior of maxima under the action of $\Z/(n) = \langle \rho \mid \rho^n = 1\rangle$.

\begin{lemma}\label{lem:maxima}
Assume $k$ is a maximum of $(a_{ij}) \in \Z_{\geq 0}^{n \times r}(m)$. Then $k-1$ (resp. $k+1$) is a maximum of $\rho\cdot(a_{ij})$ (resp. $\rho^{-1}\cdot(a_{ij})$. In other words, $k$ is a maximum of $(a_{ij})$ if and only if $k-1$ is a maximum of $\rho\cdot (a_{ij})$. 
\end{lemma}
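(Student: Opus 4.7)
The plan is to write down the shifted tuple explicitly, observe that $t_k$ of the shift equals $t_{k+1}$ of the original plus a constant that is independent of $k$, and then conclude that the maximizing index shifts by $1$ (cyclically).

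First I would set $(b_{ij}) := \rho \cdot (a_{ij})$, so that $b_{ij} = a_{i+1,j}$ for $1 \leq i \leq n-1$ and $b_{nj} = a_{1j}$. Summing over $j$, this gives $b_i = a_{i+1}$ for $i < n$ and $b_n = a_1$. A direct computation for $1 \leq k \leq n-1$ then yields
\[
t_k((b_{ij})) = \frac{mk}{n} - (a_2 + \cdots + a_{k+1}) = \left(\frac{m(k+1)}{n} - (a_1 + \cdots + a_{k+1})\right) + \left(a_1 - \frac{m}{n}\right) = t_{k+1}((a_{ij})) + C,
\]
where $C := a_1 - m/n$ is a constant independent of $k$.

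Next I would handle the boundary $k=n$. Since the cyclic shift preserves the total $\sum_i b_i = \sum_i a_i = m$, we have $t_n((b_{ij})) = 0 = t_n((a_{ij}))$. Moreover, $t_1((a_{ij})) = m/n - a_1 = -C$, so the identity $t_n((b_{ij})) = t_1((a_{ij})) + C$ also holds; viewing the index cyclically modulo $n$ with the convention $t_0 = t_n = 0$, the relation $t_k((b_{ij})) = t_{k+1}((a_{ij})) + C$ therefore extends to all $k \in \mathbb{Z}/n$.

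Because the two functions $k \mapsto t_k((b_{ij}))$ and $k \mapsto t_{k+1}((a_{ij}))$ differ by the constant $C$, they share the same set of maximizing indices. Hence $k+1$ is a maximum of $(a_{ij})$ if and only if $k$ is a maximum of $\rho \cdot (a_{ij})$, which is equivalent to the stated claim (with the convention that for $k = 1$ the shifted index is $n$). The statement for $\rho^{-1}$ follows by replacing $(a_{ij})$ with $\rho^{-1} \cdot (a_{ij})$ and applying the statement already proved. The only point requiring care is the wrap-around at $k = n$, and this is resolved by the observation that the totals coincide, so no genuine obstacle arises.
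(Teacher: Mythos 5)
Your proof is correct and rests on the same key computation as the paper's, namely the identity $t_{k}(\rho\cdot(a_{ij})) = t_{k+1}((a_{ij})) + (a_1 - m/n)$. Where the paper verifies the maximizing property of $k-1$ for $\rho\cdot(a_{ij})$ by a direct case analysis ($k=1$ versus $k\neq 1$, plus a separate check against $t'_n=0$), you package all boundary behavior at once by noting $t_n$ is invariant and that two functions differing by a constant share the same argmax set; this is a cleaner way of organizing the same argument and avoids the explicit casework.
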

\begin{proof}
We only prove the statement concerning $\rho$, the one concerning $\rho^{-1}$ is analogous. Let us denote $t := t(a_{ij})$ and $t' := t(\rho\cdot (a_{ij}))$. We know that $t_{k} \geq t_{p}$ for any $p = 1, \dots, n$ but must additionally show that $t'_{k-1} \geq t'_{p}$ for all such values of $p$.
Let us assume, first, that $k \neq 1$. Then,
\[
t'_{k-1} = \frac{m(k-1)}{n} - a_{2} - \cdots - a_{k} = t_{k} -\frac{m}{n} + a_{1} \geq t_{p} - \frac{m}{n} + a_{1} = t'_{p-1}
\]
for $p \neq 1$. Thus, it only remains to justify that $t'_{k-1} \geq t'_{n} = 0$. We know that $t_{k} \geq t_{1} = \frac{m}{n} - a_{1}$, so $t'_{k-1} = t_{k} - \frac{m}{n} + a_{1} \geq 0 = t'_{n}$. The result follows. 

Now assume that $k = 1$, so we need $t'_{n} \geq t'_{p}$ for any $p = 1, \dots, n$. That is, $0 \geq t'_{p}$ for $p = 1, \dots, n-1$. This is equivalent to the statement that $0 \geq t_{p} - \frac{m}{n} + a_{1}$ for $p = 2, \dots, n$, which in turn is the same as $t_{1} \geq t_{p}$ for $p = 2, \dots, n$. This finishes the proof.
\end{proof}

\begin{corollary}\label{cor:maxima}
The action of $\Z/(n)$ on $\Z^{n \times r}_{\geq 0}(m)$ preserves the number of maxima. 
\end{corollary}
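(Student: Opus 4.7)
The plan is to deduce the corollary directly from Lemma \ref{lem:maxima} by observing that the lemma already establishes a bijection between maxima of $(a_{ij})$ and maxima of $\rho \cdot (a_{ij})$; iterating then gives the full $\Z/(n)$-statement.

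More concretely, first I would note that Lemma \ref{lem:maxima} can be interpreted as saying that the shift map
\[
\tau: \{1, \dots, n\} \to \{1, \dots, n\}, \qquad \tau(k) := k-1 \pmod{n}
\]
(so that $\tau(1) = n$) restricts to a well-defined map from the set $M(a_{ij})$ of maxima of $(a_{ij})$ to the set $M(\rho\cdot(a_{ij}))$ of maxima of $\rho\cdot(a_{ij})$. The second half of Lemma \ref{lem:maxima}, which treats $\rho^{-1}$, provides the inverse map, so $\tau$ is in fact a bijection between $M(a_{ij})$ and $M(\rho\cdot(a_{ij}))$. In particular, $|M(a_{ij})| = |M(\rho\cdot(a_{ij}))|$.

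Next I would iterate: for any $\ell \in \{0, 1, \dots, n-1\}$,
\[
|M(\rho^{\ell}\cdot(a_{ij}))| = |M(\rho^{\ell-1}\cdot(a_{ij}))| = \cdots = |M(a_{ij})|,
\]
which shows the number of maxima is constant on each $\Z/(n)$-orbit. Since $\rho$ generates $\Z/(n)$, this finishes the proof.

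There is no real obstacle here—Lemma \ref{lem:maxima} does all the work, and the corollary is essentially a restatement that a bijection preserves cardinality, applied orbit-by-orbit. The only point worth being careful about is the boundary case $k=1 \leftrightarrow k-1 = n$, which is exactly what the ``$k=1$'' paragraph of Lemma \ref{lem:maxima} was designed to handle, and no further argument is needed.
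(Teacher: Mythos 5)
Your proof is correct and matches the paper's (implicit) reasoning: Corollary \ref{cor:maxima} is stated as an immediate consequence of Lemma \ref{lem:maxima}, and your observation that the shift $k\mapsto k-1 \pmod n$ gives a bijection between maxima sets, iterated over the orbit, is exactly the intended argument.
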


\subsection{A Bizley-type formula} Now we are ready to derive a Bizley-type formula for the higher rank Catalan number in the non-coprime case. Our formula and proof follow closely those of Bizley's \cite{bizley}. Let us start with coprime $m, n$ and fix $r > 0$. Consider a rectangle of size $mk \times nk$. For $t = 1, \dots, k$, let $\varphi_{k, t}$ be the number of semistandard parking functions whose underlying Dyck path touches the diagonal in $t$ points. It is clear that
\begin{equation}\label{eqn:higher rank as sum}
C^{(r)}_{(km, kn)} = \sum_{t = 1}^{k} \varphi_{k,t}.
\end{equation}
Similarly to the proof of the main result of \cite{bizley}, it follows from Corollary \ref{cor:maxima} that the number of elements $(a_{ij}) \in \Z^{n \times r}_{\geq 0}(m)$ with exactly $t$ maxima is
\[
\frac{1}{t}nk\varphi_{k,t}.
\]
It also follows from Corollary \ref{cor:maxima} that the total number of maxima an element $(a_{ij}) \in \Z^{nk \times r}_{\geq 0}(mk)$ may have ranges from $1, \dots, k$. Thus,
\[
|\Z^{nk \times r}_{\geq 0}(mk)| = \binom{nkr + mk - 1}{mk} = \sum_{t=1}^{k}\frac{1}{t}nk\varphi_{k,t} \Rightarrow \frac{1}{nk}\binom{nkr+mk-1}{mk} = \sum_{t = 1}^{k}\frac{1}{t}\varphi_{k,t}.
\]
Now let $\psi_{\ell} := \varphi_{\ell, 1}$. That is, $\psi_{\ell}$ is the number of rank $r$ semistandard parking functions on an $m\ell \times n\ell$-rectangle whose underlying Dyck path only touches the diagonal at one point, necessarily located at the southeast corner of the rectangle. Hence, we have 
\begin{equation}\label{eqn:varphi as sum}
\varphi_{k, t} = \sum_{\substack{\ell_{i} \geq 0 \\ \ell_{1} + \cdots + \ell_{t} = k}}\psi_{\ell_{1}}\cdots \psi_{\ell_{t}}.
\end{equation}
This is precisely is the coefficient of $x^{k}$ in $P(x)^{t}$, where
\[
P(x) = \sum_{\ell = 1}^{\infty}\psi_{\ell}x^{\ell}.
\]
So then $\frac{1}{nk}\binom{nkr+mk-1}{mk}$ is the coefficient of $x^{k}$ in $\sum_{t = 1}^{k}\frac{1}{t}P(x)^{t}$. Since $P(x)$ has no constant term, then this is also the coefficient of $x^{k}$ in $\sum_{t = 1}^{\infty}\frac{1}{t}P(x)^{t}$. Thus, we have
\[
\sum_{k = 1}^{\infty}\frac{1}{nk}\binom{nkr+mk-1}{mk}x^{k} = \sum_{t=1}^{\infty}\frac{1}{t}P(x)^{t} = -\log(1 - P(x)).
\]
Equivalently,
\begin{equation}\label{eqn:p is exp}
P(x) = 1 - \exp\left(-\sum_{k = 1}^{\infty}\frac{1}{nk}\binom{nkr+mk-1}{mk}x^{k} \right).
\end{equation}
Now it follows from \eqref{eqn:higher rank as sum} and \eqref{eqn:varphi as sum}, as well as from $P(x)$ having no constant term, that
\begin{equation}\label{eqn:geom series}
\sum_{k = 1}^{\infty}C^{(r)}_{(mk, nk)}x^{k} = \sum_{t = 1}^{\infty}P(x)^{t} = \frac{1}{1 - P(x)} -1.
\end{equation}
Thus, from \eqref{eqn:p is exp} and \eqref{eqn:geom series} we arrive at the following. 

\begin{theorem}[Bizley's formula]
Let $m, n$ be coprime positive integers, and $r\in \Z_{>0}$. Then we have an equality of formal power series:
\[
1 + \sum_{k = 1}^{\infty}C^{(r)}_{(mk,nk)}x^{k} = \exp\left(\sum_{k = 1}^{\infty}\frac{1}{nk}\binom{nkr+mk-1}{mk}x^{k} \right).
\]
\end{theorem}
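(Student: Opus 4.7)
The plan is to follow Bizley's original strategy \cite{bizley}, adapted to the semistandard setting via the $\Z/(n)$-cyclic symmetry of maxima established in Corollary \ref{cor:maxima}. Fix coprime $m,n > 0$ and $r \in \Z_{>0}$. For each $k \geq 1$ and $1 \leq t \leq k$, let $\varphi_{k,t}$ denote the number of rank $r$ semistandard $(mk,nk)$-parking functions whose underlying Dyck path meets the diagonal in exactly $t$ points. Since every such Dyck path touches the diagonal at least at the southeast corner, $C^{(r)}_{(mk,nk)} = \sum_{t=1}^{k}\varphi_{k,t}$.

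The first step is to count $\Z^{nk \times r}_{\geq 0}(mk)$ in two different ways. On the one hand, it has cardinality $\binom{nkr+mk-1}{mk}$ directly. On the other hand, by Corollary \ref{cor:maxima} the $\Z/(nk)$-action preserves the number of maxima, and under the bijection with $P^{-}_{r}(mk,nk)$ from the proof of Theorem \ref{thm:coprime sspf}, a tuple is a semistandard parking function precisely when $nk$ is one of its maxima. The orbit-counting argument Bizley uses in the $r=1$ case then yields that tuples with exactly $t$ maxima number $(nk/t)\varphi_{k,t}$, so
\[
\frac{1}{nk}\binom{nkr+mk-1}{mk} \;=\; \sum_{t=1}^{k}\frac{\varphi_{k,t}}{t}.
\]

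The second step is to introduce $\psi_\ell := \varphi_{\ell,1}$, the number of \emph{irreducible} rank $r$ semistandard $(m\ell,n\ell)$-parking functions, i.e.\ those whose underlying Dyck path touches the diagonal only at its two endpoints. Concatenating at diagonal touch points decomposes any semistandard parking function with $t$ touches uniquely into a sequence of $t$ irreducible ones, giving $\varphi_{k,t} = \sum_{\ell_1 + \cdots + \ell_t = k,\, \ell_i \geq 1}\psi_{\ell_1}\cdots\psi_{\ell_t}$. Setting $P(x) := \sum_{\ell \geq 1}\psi_\ell x^\ell$, the previous identity becomes $\sum_{k\geq 1}\tfrac{1}{nk}\binom{nkr+mk-1}{mk}x^k = \sum_{t\geq 1}\tfrac{P(x)^t}{t} = -\log(1-P(x))$, while $\sum_{k\geq 1}C^{(r)}_{(mk,nk)}x^k = \sum_{t\geq 1}P(x)^t = \tfrac{1}{1-P(x)} - 1$.

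Combining the two then yields
\[
1 + \sum_{k\geq 1}C^{(r)}_{(mk,nk)}x^k \;=\; \frac{1}{1-P(x)} \;=\; \exp\!\Bigl(\sum_{k\geq 1}\tfrac{1}{nk}\tbinom{nkr+mk-1}{mk}x^k\Bigr),
\]
which is the desired formula. The main obstacle is the orbit count in the first step: when $\gcd(mk,nk) > 1$ the $\Z/(nk)$-action on $\Z^{nk \times r}_{\geq 0}(mk)$ is not free, so one must verify that tuples with nontrivial cyclic stabilizer still contribute correctly. As in Bizley's original argument, this is handled by noting that any cyclic periodicity of a tuple forces a matching periodicity on its set of maxima (by Lemma \ref{lem:maxima}), which is precisely what ensures $t$ divides $nk$ in the relevant orbits and makes the identity $N_t = (nk/t)\varphi_{k,t}$ valid in full generality.
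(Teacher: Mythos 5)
Your proposal is correct and follows essentially the same path as the paper's own proof: the same decomposition $C^{(r)}_{(mk,nk)} = \sum_t \varphi_{k,t}$, the same orbit count $N_t = (nk/t)\varphi_{k,t}$ via Corollary~\ref{cor:maxima}, the same introduction of irreducible counts $\psi_\ell = \varphi_{\ell,1}$ and the identity $\varphi_{k,t} = [x^k]P(x)^t$, and the same generating-function manipulations ending with $-\log(1-P(x))$ and $(1-P(x))^{-1}-1$. The one thing you add beyond the paper's wording is the explicit justification of the orbit count when the $\Z/(nk)$-action is not free (the paper simply cites Bizley); your observation that a nontrivial cyclic stabilizer of period $nk/d$ forces the set of maxima to be $nk/d$-periodic, so $d \mid t$ and each orbit contributes SSPFs and elements in the fixed ratio $t : nk$, is exactly the right way to close that gap and is a welcome clarification.
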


We note that one can apply similar techniques to obtain a Bizley-type formula for the number of parking functions. For coprime $m$ and $n$,
\[
1 + \sum_{k = 1}^{\infty}\frac{1}{(mk)!}|\park{mk,nk}| = \exp\left(\sum_{k = 1}^{\infty}\frac{(nk)^{mk-1}}{(mk)!}x^{k}\right).
\]
This is, however, much more elegantly expressed by Aval and Bergeron in \cite{AvalBergeron}, who adapt Bizley's proof incorporating symmetric function arguments to find the Frobenius characteristic of the $S_{mk}$-representation $\C\!\park{mk,nk}$. One can adapt the arguments in that paper to show that
\[
C^{(r)}_{(mk,nk)} = \operatorname{Frob}(\C\!\park{mk, nk})|_{x_{1} = \cdots = x_{mk} = 1, x_{mk+1} = \cdots = 0}.
\]

\appendix

\section{On the computation of the dinv statistic}\label{sec:appendix}

In this Appendix, we elaborate on the computation of $\dinv(\D, \Upsilon)$, where $(\D, \Upsilon)$ is a semistandard parking function. In particular, we will give a ``laser ray" formula for $\codinv$, and we will show that, in the case when $\Upsilon$ is constant, $\dinv(\D, \Upsilon) = \dinv(\D)$, where the dinv statistic on Dyck paths is as in e.g. \cite{Loehr}. For the rest of this appendix, we fix coprime numbers $m$ and $n$, all the Dyck paths we consider will be in $\dyck{m,n}$.

\subsection{Codinv in terms of inversions} Let $(\D, \Upsilon)$ be a semistandard parking function of weight $\bw$, and $f = \mathcal{A}_{\bw}(\D, \Upsilon)$. Recall that $\dinv$ (and thus also $\codinv$) of $(\D, \Upsilon)$ is defined in terms of its standardization, which is the parking function $(\D, \varphi)$ associated to the unique minimal-length right $S_{\bw}$-coset representative in $\affsym{m}{}$ $\sigma$ satisfying $f = f_{\bw}\sigma$. In turn, $\codinv(\D, \varphi)$ is defined in terms of the inversions of $\sigma$, see e.g. \cite{GMV}. But by Lemma \ref{lem:inversions}, the inversions of $f$ and of $\sigma$ coincide. Thus, we get the following alternative definition of $\codinv$. 

\begin{lemma}\label{lem:dinv inversions}
Let $(\D, \Upsilon)$ be a semistandard parking function of weight $\bw$, and $f := \mathcal{A}_{\bw}(\D, \Upsilon) \in \affc{\bw}^{n}(m, r)$. Then,
\[
\codinv(\D, \Upsilon) = |\{(i,j) \in \{1, \dots, m\} \times \{1, \dots, n\} \mid f(i+j) < f(i)\}|.
\]
\end{lemma}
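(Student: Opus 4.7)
The strategy is to chase the statement through the chain of bijections that defines $\std$, and then invoke Lemma \ref{lem:inversions} to transfer counts of inversions from $\sigma$ to $f$. Specifically, unwinding Definition \ref{def:dinv}, we have $\codinv(\D, \Upsilon) = \codinv(\std(\D, \Upsilon))$. Set $(\D, \varphi) := \std(\D, \Upsilon)$ and $\sigma := \mathcal{A}(\D, \varphi) \in \affsym{m}{n}$, which by construction of $\std$ is the unique minimal-length right $S_{\bw}$-coset representative satisfying $f = f_{\bw} \circ \sigma$, i.e., $\sigma = \action_{\bw}^{-1}(f)$.

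Next I would apply the formula for $\codinv$ on parking functions given in Section~\ref{sec:parking}, namely
\[
\codinv(\D, \varphi) = |\{(i,j) \in \{1, \dots, m\} \times \{1, \dots, n\} \mid \sigma(i+j) < \sigma(i)\}|.
\]
The set on the right-hand side is exactly the set of inversions $(i, i+j) \in \Inv(\sigma)$ whose first coordinate lies in $\{1, \dots, m\}$ and whose height $j$ lies in $\{1, \dots, n\}$. So the statement of the lemma amounts to asserting that these ``low'' inversions are the same for $\sigma$ and for $f = f_{\bw} \circ \sigma$.

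This is precisely what Lemma \ref{lem:inversions} gives: $\Inv(\sigma) = \Inv(f_{\bw} \circ \sigma) = \Inv(f)$. Applying this equality restricted to pairs $(i, i+j) \in \{1, \dots, m\} \times \{i+1, \dots, i+n\}$, we conclude
\[
|\{(i,j) \in \{1, \dots, m\} \times \{1, \dots, n\} \mid \sigma(i+j) < \sigma(i)\}| = |\{(i,j) \in \{1, \dots, m\} \times \{1, \dots, n\} \mid f(i+j) < f(i)\}|,
\]
which yields the desired identity.

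Since all the work has been done in Lemma \ref{lem:inversions} and in the construction of the standardization map, there is no serious obstacle here; the only point requiring a line of care is bookkeeping the chain $(\D, \Upsilon) \mapsto f \mapsto \sigma \mapsto (\D, \varphi)$ to confirm that the affine permutation $\omega$ appearing in the definition of $\codinv$ for parking functions coincides with the coset representative $\sigma = \action_{\bw}^{-1}(f)$ produced by standardization. This is immediate from Remark \ref{rem:PFstdtoPF}, since parking functions standardize to themselves.
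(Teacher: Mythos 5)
Your argument is correct and reproduces the paper's own (brief) reasoning: unwind $\codinv(\D,\Upsilon)$ to $\codinv$ of the standardization $(\D,\varphi)$, whose defining formula counts inversions of $\sigma := \cA(\D,\varphi) = \action_{\bw}^{-1}(f)$ of height at most $n$ and left index in $\{1,\dots,m\}$, and then apply Lemma \ref{lem:inversions} to replace $\Inv(\sigma)$ with $\Inv(f_{\bw}\circ\sigma) = \Inv(f)$. One small note on your final paragraph: the identification $\omega = \sigma$ follows directly from the definition $\std = \cA^{-1}\circ\action_{\bw}^{-1}\circ\cA_{\bw}$ by cancelling $\cA\circ\cA^{-1}$, not from Remark \ref{rem:PFstdtoPF}, which concerns only the special case $\bw = (1^m)$ and is not what is being used here.
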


\begin{remark}\label{rmk:dinv inversions}
We will use Lemma \ref{lem:dinv inversions} in a slightly different way. First, recall that the definition of $f = \mathcal{A}_{\bw}(\D, \Upsilon)$ is in two steps, first defining a function $\tilde{f}$ and $f$ is defined as a translation of $\tilde{f}$. Thus, we could have replaced $f$ by $\tilde{f}$ in Lemma \ref{lem:dinv inversions}.

Second, note that if $\tilde{f}(i+j) < \tilde{f}(i)$, then $\tilde{f}(i+m+j) = \tilde{f}(i+j) + r < \tilde{f}(i) + r = \tilde{f}(i+m)$. Thus, in Lemma \ref{lem:dinv inversions} we can replace the set $\{1, \dots, m\}$ by any $m$-element set whose elements are pairwise distinct modulo $m$. In other words, if $\{a_{1}, \dots, a_{m}\}$ is any set of elements which are pairwise distinct mod $m$ then
\[
\codinv(\D, \Upsilon) = |\{(i,j) \in \{1, \dots, m\} \times \{1, \dots, n\} \mid \tilde{f}(a_i+j) < \tilde{f}(a_i)\}|.
\]
This is the form in which we will use Lemma \ref{lem:dinv inversions}.
\end{remark}

\subsection{Path and area boxes}  Recall that a box is a unit square whose corners have integer coordinates, and that we identify a box with its upper right corner.

\begin{definition}
We say that a box is an \emph{area box} of $\D$ if it is entirely contained between $\D$ and the main diagonal. We say that a box is a \emph{path box} of $\D$ if its right side is a vertical step of $\D$. 
\end{definition} 

Note that $\D$ always has exactly $m$ path boxes, corresponding to its vertical steps, see Figure \ref{fig:boxes}. 

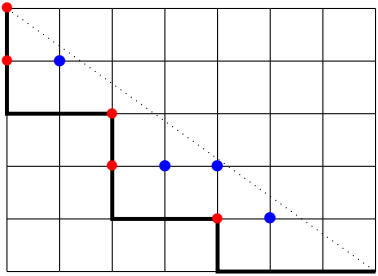
\begin{figure}[ht]
\begin{tikzpicture}[scale=0.7]
 \pgfmathtruncatemacro{\m}{5}
  \pgfmathtruncatemacro{\n}{7}
 \pgfmathtruncatemacro{\mi}{\m-1} 
    \pgfmathtruncatemacro{\ni}{\n-1} 
    %
\draw[dotted] (0,\m)--(\n,0); 
  \foreach \x in {0,...,\n} 
    \foreach \y  [count=\yi] in {0,...,\mi}    
      \draw (\x,\y)--(\x,\yi);
        \foreach \y in {0,...,\m}     
       \foreach \x  [count=\xi] in {0,...,\ni}          
         \draw (\x,\y)--(\xi,\y);
\draw[line width = 1.5] (0,5)--(0,3)--(2,3)--(2,1)--(4,1)--(4,0)--(7,0);
\node at (0,5) {\small \color{red} $\bullet$};
\node at (0,4) {\small \color{red} $\bullet$};
\node at (2,3) {\small \color{red} $\bullet$};
\node at (2,2) {\small \color{red} $\bullet$};
\node at (4,1) {\small \color{red} $\bullet$};
\node at (1,4) {\color{blue} $\bullet$};
\node at (3,2) {\color{blue} $\bullet$};
\node at (4,2) {\color{blue} $\bullet$};
\node at (5,1) {\color{blue} $\bullet$};
 \end{tikzpicture}
\caption{The (upper right corners of the) area boxes of $\D$, marked as {\color{blue}blue} dots, and the path boxes of $\D$, marked as {\color{red}red} dots.} \label{fig:boxes}
\end{figure}

\subsection{Codinv and laser rays} Let $\D$ be a Dyck path. For any box $b$, its \newword{laser ray} $\ell_{b}$ is the line parallel to the main diagonal and passing through the upper right corner of $b$.

Now let $(\D, \Upsilon)$ be a semistandard parking function. If $\color{blue} b$ is an area box of $\D$, let
\begin{equation}
\codinv_{\color{blue} b}(\D, \Upsilon) := |\{v \in \ver(\D) \mid \ell_{{\color{blue}b}} \; \text{intersects} \; v\}|.
\end{equation}

On the other hand, if $\color{red} b$ is a path box of $\D$, let $\Upsilon({\color{red} b})$ be the value of $\Upsilon$ at the vertical step that is the right edge of $\color{red} b$. For such a path box $\color{red} b$ of $\D$, define
\begin{equation}
\codinv_{\color{red} b}(\D, \Upsilon) := |\{v \in \ver(\D) \mid \ell_{{\color{red}b}} \; \text{intersects} \; v \; \text{and} \; \Upsilon({\color{red} b}) < \Upsilon(v)\}|.
\end{equation}

We have then the following computation of $\codinv$.

\begin{proposition}
Let $(\D, \Upsilon)$ be a semistandard parking function. Then,
\[
\codinv(\D, \Upsilon) = \sum_{\substack{{\color{blue} b} \; \text{is an area} \\ \text{box of} \; \D}}\codinv_{{\color{blue} b}}(\D, \Upsilon) +  \sum_{\substack{{\color{red} b} \; \text{is a path} \\ \text{box of} \; \D}}\codinv_{{\color{red} b}}(\D, \Upsilon).
\]
\end{proposition}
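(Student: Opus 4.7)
The plan is to start from Lemma~\ref{lem:dinv inversions} combined with Remark~\ref{rmk:dinv inversions}, applied to the Anderson-label set $\{\gamma(v) : v \in \ver(\D)\}$, whose elements are pairwise distinct mod $m$. This yields
\[
\codinv(\D, \Upsilon) = \bigl|\{(v, j) \in \ver(\D) \times \{1, \dots, n\} : \tilde{f}(\gamma(v) + j) < \Upsilon(v)\}\bigr|,
\]
where $\tilde{f}$ is the function from the proof of Theorem~\ref{thm:bijection}. Extending $\Upsilon$ from $\ver(\D)$ to all of $\strip_m$ via the rule $\Upsilon(a_{m-y} + k, y) := \Upsilon(v_{m-y}) - kr$ (the same extension used in that proof) makes $\tilde{f}(\gamma(b)) = \Upsilon(b)$ for every $b \in \strip_m$, so the condition becomes $\Upsilon(b_{v, j}) < \Upsilon(v)$, where $b_{v, j} \in \strip_m$ denotes the unique box with $\gamma(b_{v, j}) = \gamma(v) + j$.

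The next step is to split by $j$. For $1 \le j \le n - 1$, a direct calculation parametrising $\ell_b$ by $mX + nY = mx_b + ny_b$ and intersecting with the vertical line $X = a_v$ shows that $\ell_b$ meets the open vertical step $v$ precisely when $\gamma(b) - \gamma(v) \in \{1, \dots, n-1\}$. Thus these pairs $(v, j)$ biject with pairs $(b, v)$ such that $b \in \strip_m$ and $\ell_b$ crosses $v$. Since $\gamma(b_{v, j}) > 0$, the box $b_{v, j}$ lies strictly below the diagonal, excluding positions right of $R_{(m,n)}$ (which have $\gamma < 0$); the remaining possibilities are (a) $b$ is an area box of $\D$, (b) $b$ is a path box of $\D$, or (c) $b$ lies strictly to the left of the vertical step of $\D$ in its row, either below $\D$ inside $R_{(m,n)}$ or outside $R_{(m,n)}$ on the left. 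The extension formula, combined with $\Upsilon(v) \in \{1, \dots, r\}$, yields
\[
\text{(a)}\ \Upsilon(b) \le 0 < \Upsilon(v), \qquad \text{(b)}\ \Upsilon(b) = \Upsilon(v') \text{ for the path box of } v', \qquad \text{(c)}\ \Upsilon(b) \ge 1 + r > \Upsilon(v).
\]
Summing over $b$ recovers precisely the area-box and path-box contributions in the statement.

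The final step is to check that $j = n$ contributes zero. Writing $v = (a_v, c_v)$: if $c_v \ge 2$ then $b_{v, n} = (a_v, c_v - 1)$, and either this equals the next vertical step $v_{i+1}$ (in which case semistandardness forces $\Upsilon(v_{i+1}) \ge \Upsilon(v)$), or it sits strictly left of $\D$ at row $c_v - 1$ and the extension formula gives $\Upsilon(b_{v, n}) \ge 1 + r > \Upsilon(v)$. If $c_v = 1$, then $b_{v, n} = (a_{m-1} - n, m)$, which lies outside $R_{(m,n)}$ on the left of $\D$; here $a_{m-1} < n$ because the lattice point $(n, 1)$ has $\gamma = -n < 0$ and so cannot lie on a Dyck path, and the same inequality $\Upsilon(b_{v, n}) \ge 1 + r > \Upsilon(v)$ applies.

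The main obstacle will be the geometric case analysis of the position of $b_{v, j}$ in $\strip_m$ during the second step: one has to verify that the three possibilities (a)--(c) genuinely exhaust the boxes with $\gamma > 0$, and that the $\gamma > 0$ hypothesis rules out the ``right of $R_{(m,n)}$'' position where the extension formula would otherwise manufacture a spurious contribution. Once this positional bookkeeping is in place, every remaining inequality on $\Upsilon$ is a one-line consequence of the extension formula, and the decomposition into area-box plus path-box sums follows mechanically.
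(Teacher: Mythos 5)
Your proof is correct and follows essentially the same strategy as the paper: apply Lemma~\ref{lem:dinv inversions} (in the form of Remark~\ref{rmk:dinv inversions}) with the Anderson labels of the path boxes, translate $\gamma(b) - \gamma(v) \in \{1, \dots, n-1\}$ into the laser ray of $b$ crossing $v$, and split the target boxes $b$ with $\gamma(b) > 0$ into area boxes (contribution automatic), path boxes (contribution conditional on $\Upsilon$), and boxes strictly left of $\D$ (never contribute). The one place you differ is the $j = n$ boundary: the paper silently restricts to $\gamma({\color{red}b}) < \gamma(b') < \gamma({\color{red}b}) + n$, which is justified by noting $\tilde f$ is $n$-stable so $\tilde f(\gamma(v)+n) \ge \tilde f(\gamma(v)) = \Upsilon(v)$ can never give an inversion; your positional case analysis for $j=n$ reaches the same conclusion and is correct, but the $n$-stability observation would make that step a one-liner.
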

\begin{remark}
Note that, if $\color{blue} b$ is an area box of $\D$, then $\codinv_{{\color{blue} b}}(\D, \Upsilon)$ does not depend on $\Upsilon$. 
\end{remark}
\begin{proof}
We will use Lemma \ref{lem:dinv inversions} in the form given in Remark \ref{rmk:dinv inversions}. For the set $\{a_{1}, \dots, a_{m}\}$ we will use $\gamma(v)$, where $v$ is a vertical step of $\D$ (note that this is the same as $\gamma({\color{red} b})$ where $\color{red} b$ is a path box of $\D$). Recall also the horizontal strip $\strip_{m}$ and the extension of $\Upsilon$ to $\strip_{m}$. As noted in Section \ref{sec:affcomp}, the map $\gamma: \strip_{m} \to \Z$ is a bijection, and $\tilde{f}$ is determined by $\tilde{f}(\gamma(b)) = \Upsilon(b)$ for every $b \in \strip_{m}$. 

Let us now pick a path box $\color{red} b$ of $\D$, with coordinates say $(x,y)$. We want to find those boxes $b'$ in $\strip_{m}$ satisfying:
\begin{enumerate}
\item $\gamma({\color{red} b}) < \gamma(b') < \gamma({\color{red} b}) + n$,
 \item $\Upsilon(b') < \Upsilon({\color{red} b})$.
\end{enumerate}
Note, first, that $b'$ cannot be on the same vertical nor horizontal line as $\color{red} b$. We assume, first, that $b'$ is southeast of $\color{red} b$. This means that $b' = (x+x', y-y')$ for some $x', y' \in \Z_{> 0}$. Now $\gamma(b') = \gamma({\color{red} b}) -mx' + y'n$, so the condition $\gamma({\color{red} b}) < \gamma(b') < \gamma({\color{red} b}) + n$ translates to
\[
\frac{-y'}{x'} < \frac{-m}{n} < \frac{-y'+1}{x'}.
\]
which means that the laser ray of $\color{red} b$ cuts the length-1 vertical segment whose bottom vertex is $b'$. Equivalently, the laser ray starting at $b'$ cuts the vertical step whose top vertex is ${\color{red} b'}$. It is easy to see that we get the same condition if $b'$ is northwest of $b$, and the condition $\gamma({\color{red} b}) < \gamma(b')$ precludes $b'$ from being northeast or southwest of $\color{red} b$. Thus, Condition (1) translates to the laser ray of $b'$ intersecting the vertical step whose top vertex is $\color{red} b$.

Let us move to Condition (2). Note that $\Upsilon$ takes values in $[1,r]$ on vertical steps of $\D$, values $> r$ on lattice points to the left of $\D$, and values $< 0$ on  lattice points to the right of $\D$. Thus, $b'$ must be either on or to the right of $\D$. It is clear that the laser ray of a point to the right of the main diagonal will not intersect any vertical step of $\D$. Thus, $b'$ is either an area box or a path box of $\D$. If $b'$ is an area box of $\D$ the condition $\Upsilon(b') < \Upsilon({\color{red} b})$ is automatically satisfied. If $b'$ is a path box of $\D$, we need to ask for this condition. From here, the result follows. 
\end{proof}

In Figure \ref{fig:codinv laser} we compute the $\codinv$ of a semistandard parking function using laser rays.

\begin{figure}[ht]
\begin{tikzpicture}[scale=1.5]
\draw(0,0)--(7,0)--(7,5)--(0,5)--cycle;
\draw(0,1)--(7,1);
\draw(0,2)--(7,2);
\draw(0,3)--(7,3);
\draw(0,4)--(7,4);
\draw(1,0)--(1,5);
\draw(2,0)--(2,5);
\draw(3,0)--(3,5);
\draw(4,0)--(4,5);
\draw(5,0)--(5,5);
\draw(6,0)--(6,5);
\draw[dotted](7,0)--(0,5);
\draw[line width = 1.5](0,5)--(0,3)--(2,3)--(2,1)--(4,1)--(4,0)--(7,0);

\draw[dashed, color=red] (0,4)--(28/5,0);
\node at (0,4) {\small{\color{red} $\bullet$}};
\draw[dashed,thick,  color=red] (0, 3+10/7)--(31/5,0);
\node at (2,3) {\small{\color{red} $\bullet$}};
\draw[dashed,thick,  color=red] (0, 2+10/7)--(24/5,0);
\node at (2,2) {\small{\color{red} $\bullet$}};
\draw[dashed, color=red] (0,1+20/7)--(27/5,0);
\node at (4,1) {\small{\color{red} $\bullet$}};

\draw[dashed,thick,  color=cyan] (0,4+5/7)--(33/5,0);
\node at (1,4) {\small{\color{blue} $\bullet$}};
\draw[dashed,thick,  color=cyan] (0, 2+15/7)--(29/5,0);
\node at (3,2) {\small{\color{blue} $\bullet$}};
\draw[dashed,thick,  color=cyan] (0, 2+20/7)--(34/5,0);
\node at (4,2) {\small{\color{blue} $\bullet$}};
\draw[dashed, thick, color=cyan] (0, 1+25/7)--(32/5,0);
\node at (5,1) {\small{\color{blue} $\bullet$}};

\draw[red, line width = 3pt ]  (-0.1, 3+10/7)-- (0.1, 3+10/7);
\draw[red, line width = 3pt ]  (-0.1, 2+10/7)-- (0.1, 2+10/7);
\draw[red, line width = 3pt ]  (4-0.1, 2-10/7)-- (4+0.1, 2-10/7);
\draw[cyan, line width = 3pt ]  (-0.1, 2+20/7)-- (0.1, 2+20/7);
\draw[cyan, line width = 3pt ]  (-0.1, 4+5/7)-- (0.1, 4+5/7);
\draw[cyan, line width = 3pt ]  (-0.1, 2+15/7)-- (0.1, 2+15/7);
\draw[cyan, line width = 3pt ]  (2-0.1, 2+5/7)-- (2+0.1, 2+5/7);
\draw[cyan, line width = 3pt ]  (-0.1, 1+25/7)-- (0.1, 1+25/7);

\node at (0.2, 4.5) {\Large \color{blue} 2};
\node at (0.2, 3.5) {\Large \color{blue} 2};
\node at (2.2, 2.5) {\Large \color{blue} 1};
\node at (2.2, 1.5) {\Large \color{blue} 1};
\node at (4.2, 0.5) {\Large \color{blue} 3};
\end{tikzpicture}
\caption{Computing the $\codinv$ statistic of a semistandard parking
function using laser rays.  Every intersection of a
{\color{cyan}{blue}}
 laser ray with a vertical step adds 1 to $\codinv$; there are five of
these. An intersection of a
{\color{red}{red}}
laser ray with a vertical step adds 1 to $\codinv$ only if the laser ray emanates from a path box whose value under $\Upsilon$ is smaller than that of the vertical step the ray intercepts. There are three contributions to $\codinv$ appearing this way. In total, $\codinv(\D, \Upsilon) = {\color{cyan} 5} + {\color{red} 3 } = 8$.}
\label{fig:codinv laser} \end{figure}
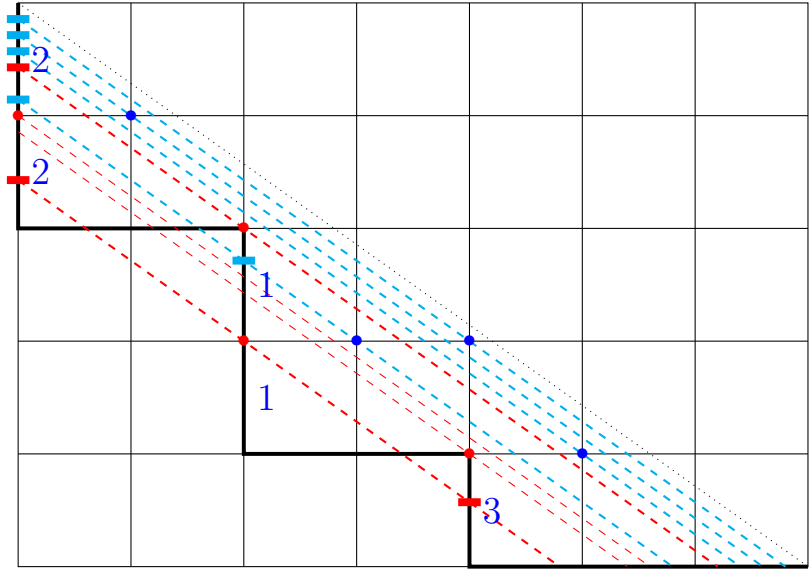

\bibliographystyle{acm} 
\bibliography{references}

\end{document}